\documentclass[11pt,a4paper]{article}
\usepackage[T1]{fontenc}
\usepackage[utf8]{inputenc}
\usepackage{lmodern,microtype}
\usepackage[margin=29mm]{geometry}
\usepackage{amsmath,amssymb,amsthm,mathtools}
\usepackage{booktabs,array,enumitem,xurl}
\usepackage[colorlinks=true,linkcolor=blue,citecolor=blue,urlcolor=blue]{hyperref}

\numberwithin{equation}{section}
\newtheorem{theorem}{Theorem}[section]
\newtheorem{proposition}[theorem]{Proposition}
\newtheorem{lemma}[theorem]{Lemma}
\newtheorem{corollary}[theorem]{Corollary}

\theoremstyle{definition}

\newcommand{\one}{\mathbf 1}
\newcommand{\E}{\mathbb E}

\newcommand{\Sg}{\mathfrak S}
\newcommand{\Res}{\operatorname{Res}}

\title{Shifted convolutions of divisor functions}
\author{Miguel N. Walsh}
\date{}
\begin{document}
\maketitle
\begin{abstract}
We establish the leading asymptotic for the shifted correlations $\sum_{n \le x} d_k(n)d_l(n+h)$ of divisor functions.
\end{abstract}
\tableofcontents

\section{Introduction}
\label{sec:introduction}

For an integer $k\ge1$, let $d_k(n)$ be the standard divisor function counting the ordered factorizations
of $n$ into $k$ positive factors.
The purpose of this article is to establish the following result.

\begin{theorem}\label{thm:main}
Given integers $k,l\ge1$, $h\ne0$, there exists $\eta=\eta(k,l)>0$
such that, for every $\Phi\in C_c^\infty((0,\infty))$,
\begin{equation}\label{eq:smooth-main}
\begin{aligned}
 \sum_n\Phi(n/x)d_k(n)d_l(n+h)
 &=\frac{\Sg_{k,l}(h)}{(k-1)!(l-1)!}
       x(\log x)^{k+l-2}\int_0^\infty\Phi(u)\,du\\
 &\quad+O_{k,l,h,\Phi}\bigl(x(\log x)^{k+l-2-\eta}\bigr),
\end{aligned}
\end{equation}
where the singular series is given by the constant
\begin{equation}\label{eq:singular}
 \Sg_{k,l}(h)=\prod_p(1-p^{-1})^{k+l-2}I_{p,h},\qquad
 I_{p,h}=\int_{\mathbb Z_p}
       d_k(p^{v_p(u)})d_l(p^{v_p(u+h)})\,du,
\end{equation}
with the normalized Haar measure over $\mathbb{Z}_p$. In particular, for every $h, k, l \ge 1$ we have
\begin{equation}\label{eq:sharp-main}
 \sum_{1\le n\le x}d_k(n)d_l(n+h)
 \sim\frac{\Sg_{k,l}(h)}{(k-1)!(l-1)!}
                 x(\log x)^{k+l-2}.
\end{equation}
\end{theorem}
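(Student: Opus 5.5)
The plan is to use a dispersion-type argument that expands only the \emph{shorter} divisor function and handles the other by equidistribution in arithmetic progressions, paired with a sieve-theoretic treatment of the anatomy of $n$. A log-power saving $\eta>0$, rather than a power saving, leaves room to discard structured exceptional sets.

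\textbf{Step 1: reduction by anatomy.} Write $d_k=\one*\cdots*\one$. For the bulk of $n\le x$ (all but a set whose weighted contribution is $O(x(\log x)^{k+l-2-\eta})$, by Shiu's bound for $d_k(n)d_l(n+h)$ over exceptional $n$), $n$ and $n+h$ have well-spaced prime factorizations. One can then restrict $d_k(n)$ to factorizations $n=n_1\cdots n_k$ whose factors all exceed $x^{\delta}$ except possibly a controlled number of smooth pieces. Concretely, I would apply a Heath-Brown-type combinatorial identity or a smooth dyadic partition, writing
\[
d_k(n)=\sum_{\substack{n=m_1\cdots m_k\\ m_i\asymp M_i}}1 .
\]
The contribution where some $M_i\le x^{\delta}$ is treated recursively: it reduces to correlations $d_{k-1}$ twisted by residue classes modulo small moduli.

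\textbf{Step 2: main range.} When $d_k(n)$ has a factorization with some product of factors $q=m_1\cdots m_j\in[x^{\epsilon},x^{1/2-\epsilon}]$, the sum becomes
\[
\sum_{q}\sum_{m\equiv -h\ (q)}d_l(m)\cdot(\text{smooth weight}).
\]
Here the equidistribution of $d_l$ in progressions to moduli $q\le x^{1/2-\epsilon}$ \emph{on average} is needed, in Bombieri--Vinogradov form. This holds for every $l$ by the large sieve, since $d_l$ is a Dirichlet convolution satisfying Siegel--Walfisz. Summing the main terms reproduces the local densities $I_{p,h}$ through a standard Euler-product computation, and a contour-integral evaluation produces $x(\log x)^{k+l-2}/((k-1)!(l-1)!)$ times $\int\Phi$.

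\textbf{Step 3: the obstacle.} The hard case is $n$ for which no subproduct of divisors lands in $[x^{\epsilon},x^{1/2-\epsilon}]$, or symmetrically for $n+h$. This means $n$ is essentially a product of at most one huge prime-like factor and small factors, with a gap in the divisor set. By the Erd\H{o}s--Tenenbaum theory of divisors in intervals, however, the proportion of $n$, weighted by $d_k(n)$, lacking a divisor in $[x^{\epsilon},x^{1/2-\epsilon}]$ is $\ll \epsilon^{c}$ times the total, uniformly. So this set contributes $O(\epsilon^{c}x(\log x)^{k+l-2})$ after bounding $d_l(n+h)$ on it via Shiu/Nair--Tenenbaum. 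Choosing $\epsilon=(\log x)^{-\eta'}$ then requires making Step 2 uniform for $\epsilon$ a small negative power of $\log x$. The dependence on $\epsilon$ is only through Bombieri--Vinogradov with moduli up to $x^{1/2}/(\log x)^{A}$, which is admissible.

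The main difficulty I anticipate is exactly this balancing: proving the weighted divisor-gap estimate with a power-of-$\epsilon$ saving that holds simultaneously with a factor $d_l(n+h)$ present. I would attempt this by a sieve upper bound of Nair--Tenenbaum type for the multiplicative function $f(n)g(n+h)$, where $f=d_k\cdot\one_{\text{gap}}$. The gap condition is encoded through the prime factors of $n$ in $[x^{\epsilon},x^{1/2}]$ lying in a thin multiplicative set, which costs a factor $\epsilon^{c}$ by Mertens.

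After the smooth statement is established, \eqref{eq:sharp-main} follows by sandwiching $\one_{[1,x]}$ between smooth $\Phi$ supported in $(0,\infty)$ and letting the approximation improve. The range $n\le x^{1-o(1)}$ contributes negligibly by Shiu's bound.
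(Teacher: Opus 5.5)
There is a genuine gap in Step~2, and it is exactly the obstruction that kept the cases $k,l\ge3$ open. Choosing a subproduct $q=m_1\cdots m_j\in[x^{\epsilon},x^{1/2-\epsilon}]$ of a factorization $n=m_1\cdots m_k$ does not turn the sum into $\sum_q\sum_{m\equiv -h\,(q)}d_l(m)\cdot(\text{smooth weight})$. The cofactor $n/q=m_{j+1}\cdots m_k$ still carries the weight $d_{k-j}(n/q)$, and once you choose $q$ canonically to avoid overcounting, it carries a weight depending on the whole factorization. What you actually get is $\sum_q d_j(q)\sum_m d_{k-j}(m)\,d_l(qm+h)$. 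This is a shifted convolution of the original type along progressions, and Bombieri--Vinogradov for $d_l$ says nothing about it.

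The cofactor becomes unweighted only if you put all but one factor into the modulus, e.g.\ $d_k=d_{k-1}*\one$ with $q$ the $d_{k-1}$-weighted divisor. But under that weighting $\log q/\log x$ asymptotically has distribution function $u^{k-1}$ on $[0,1]$, so only a proportion $2^{1-k}$ of the mass has $q\le x^{1/2}$. The rest needs equidistribution of $d_l$ in progressions to moduli typically of size $x^{1-1/k}$, far beyond what is known for $l\ge3$. Passing to the complementary divisor $n/q<x^{1/2}$ just moves the $d_{k-1}$ weight onto the free variable. For $k=2$ (or $l=2$, swapping roles) the symmetry $q\leftrightarrow n/q$ rescues you, and your argument is essentially the classical Linnik--Motohashi treatment. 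For $k,l\ge3$ it reduces the problem to itself.

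Step~3 targets the wrong obstruction. The difficulty is not a missing divisor in $[x^{\epsilon},x^{1/2-\epsilon}]$; typical $n$ have many. It is the divisor weight on the cofactor, which is present for typical $n$. The ``recursive'' treatment in Step~1 likewise lands on correlations of $d_{k-1}$ with $d_l$ along progressions, which are no easier.

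Some input beyond level-of-distribution results is needed, and the paper's route is entirely different. It subtracts a density model $D_k(n)=d_k(n_{\mathcal S})M_{k,\mathcal S}(n_{\mathcal S'})/V$ that keeps only the $\exp(L^{2/5})$-smooth part of $n$, and computes its correlation with $d_l(n+h)$ directly. Using Henriot's bound through a product measure on bank records, it restricts to $n$ such that $n$ and $n+h$ have exactly one prime factor in many prime banks, and factors those primes out. It then proves an $O_A(L^{-A})$ bound for the correlation of the dilation-invariant parts $\alpha_{\mathcal P}(n)\beta_{\mathcal P}(n+h)$. That bound comes from dilation-graph estimates, which trade the fixed shift $h$ for averaged shifts $hD_1D_2$, followed by a circle-method analysis. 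On minor arcs it uses a bilinear bound for the two independent prime products. On major arcs it uses Dirichlet-polynomial estimates: a zero-free region at high frequencies, and at low frequencies the exact cancellation of principal residues built into the model. The saving $\eta$ in the theorem comes from the density-model error and from discarding integers with too few good banks, not from any equidistribution in progressions.
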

Theorem \ref{thm:main} is made possible by the weighted-dilation graph estimates
of \cite{OpenAIGraph}, which form the heart of our proof. We will also take advantage of the zero-free region for
Dirichlet $L$-functions given in \cite[Theorem~1.1]{OpenAIZeroFree}, although this is not strictly necessary and the same result could be recovered using classical estimates. The methods are rather general and can be extended to other shifted convolution problems, as discussed in Section~\ref{subsec:further-results}.

\subsection{Prior work}

Estimating the shifted correlations of divisor functions is a classical
problem in analytic number theory, with its expected asymptotics being conjectured in several places (e.g.
\cite{ConreyGonek2001,ConreyKeating2015III,Ivic1997General,TaoDivisorHeuristic,Titchmarsh1942,Vinogradov1988}).
For $k=l=2$, the leading asymptotic was obtained in $1927$
by Ingham~\cite{Ingham1927}, with Estermann~\cite{Estermann1931}
later providing the full quadratic main term
(see also~\cite{DeshouillersIwaniec1982,Motohashi1994}).

For $k=2$, $l=3$, the leading asymptotic was established
by Hooley~\cite{Hooley1957}. Linnik~\cite{Linnik1961}
extended this to $k=2$ and arbitrary $l$, initially for
the shift $h=1$, while Motohashi~\cite{Motohashi1976,Motohashi1980}
subsequently obtained the full polynomial asymptotic in this case for
arbitrary fixed shifts. Taking $h=1$, Fouvry and
Tenenbaum~\cite{FouvryTenenbaum1985} improved the error
term for $k=2$, $l\ge4$, while
Heath-Brown~\cite{HeathBrown1986} obtained a power-saving
error for $k=2$, $l=3$.

No $k, l \ge 3$ case of Theorem \ref{thm:main} was previously known.
In~\cite{MRTCorrelationsI,MRTCorrelationsII},
Matom\"aki, Radziwi{\l}{\l} and Tao showed that the
leading asymptotic in Theorem~\ref{thm:main} holds for
almost all shifts in ranges as short as a fixed power
of $\log x$
(see also~\cite{BBMZ,IvicWu2012,PliegoSunWang2025}).
Further work on these and closely related problems
can be found in
~\cite{AndradeSmith,BettinBuiLiRadziwill2020,ConreyKeating2019,DukeFriedlanderIwaniec1994,Drappeau2017,DrappeauTopacogullari2019,HamiehVo2026,JiangLu2022,Lau2025v2,NgThom2019,Nguyen2023Correlation,Topacogullari2016,Topacogullari2018}.

\subsection{Outline of the proof}
\label{subsec:outline-of-proof}

We now briefly describe the main ideas behind the proof. 

\subsubsection{The density model and invariant factors}
\label{subsubsec:outline-reduction}

As in previous work~\cite{MRSTT,MSTT},
we begin by approximating $d_k(n)$ by a suitable model.
Put $L=\log x$ and let $\mathcal S$ be the set of primes
of size at most $\exp(L^{2/5})$.
We will consider a density model $D_k$ for the divisor
function which retains the dependence of $d_k(n)$
on the primes in $\mathcal S$, but replaces its
dependence on the remaining primes by the corresponding
mean-value density. Since the primes in $\mathcal S$
are relatively small, the shifted correlation
of $D_k(n)$ with $d_l(n+h)$ can be evaluated by
standard techniques. This is carried out in
Proposition~\ref{prop:scalar-proxy}.

The main task is therefore to control the correlation
of the centered remainder $\xi_k(n)=d_k(n)-D_k(n)$ with $d_l(n+h)$.
Notice that for any set of primes
$\mathcal P\subseteq\mathcal S$, we can write
\[
\xi_k(n)=d_k(n_{\mathcal P})\alpha_{\mathcal P}(n),
\qquad
d_l(n)=d_l(n_{\mathcal P})\beta_{\mathcal P}(n),
\]
where $n_{\mathcal P}$ is the largest divisor of $n$ supported on primes in $\mathcal P$ and the functions $\alpha_{\mathcal P}$ and
$\beta_{\mathcal P}$ are invariant under multiplication
by such primes.

If we could simply discard the factors depending on
$n_{\mathcal P}$, the problem would reduce to controlling
the shifted correlation
$\alpha_{\mathcal P}(n)\beta_{\mathcal P}(n+h)$.
One could then hope to exploit the dilation invariance
of these functions to generate enough symmetries to
obtain cancellation, as we will discuss in the next
subsection. However, we cannot discard these factors
directly: the unbounded nature of the divisor functions
could make their contribution particularly problematic.

We overcome this difficulty by restricting attention
to integers having exactly one distinct prime divisor
in certain intervals of primes, which we call
\emph{banks}. These banks consist of primes of size
$\exp(L^a)$, for suitable values of $a$, and have
reciprocal-prime mass $\sum_{p\in P_g}1/p\asymp1$.
For each bank, we expect with positive probability that
both $n$ and $n+h$ have exactly one prime divisor
in it. Since there are $\asymp\log\log x$ such banks,
we expect a typical integer to satisfy this condition
in a positive proportion of them.

Of course, it is not enough to show that exceptions
are rare, since the unbounded divisor weights could
concentrate on integers with atypical prime factorizations.
To control this possibility, we use upper bounds of
Henriot for multiplicative functions evaluated at
affine forms~\cite{Henriot,HenriotErr}. These allow
us to construct an enveloping measure that controls
divisor-weighted correlations subject to arbitrary
conditions on their prime factors in the banks (Lemma~\ref{lem:positive-record-law}).
In particular, we will show that integers with too few
banks satisfying the desired condition make a negligible
contribution to our correlation (Corollary~\ref{cor:good-records}).

Once this is established, an inclusion-exclusion
argument allows us to reduce the problem to
correlations restricted to integers $n$ for which
both $n$ and $n+h$ have exactly one distinct prime
divisor in each bank of a sufficiently large
collection. Let $\mathcal P$ denote the union of
these banks. After discarding the negligible
contribution of higher prime powers, each selected
bank contributes exactly the value $kl$ to
$d_k(n_{\mathcal P})d_l((n+h)_{\mathcal P})$.
These divisor factors are therefore
constant on the restricted set and the problem
effectively reduces to controlling the shifted
correlation
$\alpha_{\mathcal P}(n)\beta_{\mathcal P}(n+h)$ (see Section~\ref{sec:completion}).

\subsubsection{The dilation graphs}
\label{subsubsec:outline-graph}

Since Tao's work on the two-point logarithmically
averaged Chowla conjecture~\cite{TaoLogChowla},
it is natural to try to exploit the dilation invariance
of $\alpha_{\mathcal P}$ and $\beta_{\mathcal P}$
by averaging over primes in $\mathcal P$. Writing $V_{\mathcal P}=\sum_{p\in\mathcal P}1/p$,
we have
\[
\sum_{n\le x}
\frac{\alpha_{\mathcal P}(n)\beta_{\mathcal P}(n+h)}n
=
\frac1{V_{\mathcal P}}
\sum_{p\in\mathcal P}
\sum_{\substack{n\le px\\p\mid n}}
\frac{\alpha_{\mathcal P}(n)
\beta_{\mathcal P}(n+hp)}n.
\]
This replaces the fixed shift $h$ by an average over
shifts $hp$, but restricted to multiples of $p$.
In Tao's argument, an entropy-decrement method
allows this divisibility restriction to be essentially removed when the primes are sufficiently small,
making the resulting averages accessible to more
classical methods. However, the primes are required to be of sublogarithmic size and the savings one can thus obtain are rather modest.

Helfgott and Radziwi{\l}{\l}~\cite{HR} introduced
an alternative approach based on high-trace estimates
for the corresponding graph operator. This allows the
primes to be much larger, of size up to
$\exp(L^{1/2-\varepsilon})$ and thus also opens the door to much stronger savings.
In particular, Pilatte~\cite{Pilatte} extended these
methods to allow shifts made out of composite $D$. In his work, these are products of primes chosen from
many of the banks discussed above. Loosely speaking,
there are $\asymp\log\log x$ such banks, each providing
a fixed saving, leading to a bound of order $L^{-c}$
for some $c>0$.

Unfortunately, unlike in the case of bounded
multiplicative functions, our reduction of the divisor
problem to correlations of $\alpha_{\mathcal P}$
and $\beta_{\mathcal P}$ incurs losses of several
powers of $L$. The savings from the
previous methods are therefore insufficient for
our purposes, even if we were content with a
logarithmically averaged result. We instead need
bounds of order $O_A(L^{-A})$ for arbitrarily large
fixed $A$. These stronger savings are made available by the graph
machinery of \cite{OpenAIGraph}, which builds
largely on the methods of \cite{HR}.
Its main innovation is to make it possible to work
with sparse sets $\Omega$ of integers having an
unusually large number of distinct prime divisors
in each bank and hence many more possible shifts
$hD$ from which to obtain mixing.

Working with these sparse sets introduces difficulties
absent from the earlier arguments. For example,
even when $D$ is a product of prime divisors of
$n\in\Omega$, the translated integer $n+hD$ need
not have enough additional prime divisors to remain
in $\Omega$. The approach in \cite{OpenAIGraph}
overcomes such difficulties through an additional argument in
the spirit of \cite{HR,Pilatte}, showing that, after
averaging over $n$ and $D$, the divisibility conditions
required for $n+hD$ to belong to $\Omega$ behave
sufficiently like a random model that draws primes
freely from the banks.

This transference is costly, but the crucial observation
of \cite{OpenAIGraph} is that we do not need to free
all the primes in $D$ from its divisibility restrictions to obtain cancellation. Freeing only a suitably chosen
number from each bank already gives arbitrarily
strong logarithmic savings, while
keeping the cost of the comparisons under control.
Roughly speaking, this is made possible by exploiting the averaging in the choice 
of which of the many prime divisors from each bank to free in a given edge.

These stronger savings also allow us to absorb the
losses arising from the different scales reached by
dilation, avoiding the need for logarithmic averaging. Unfortunately, 
our setting is slightly different from that of
\cite{OpenAIGraph} and
we must verify their results apply to the particular
graphs required by our bank restrictions.
We carry out this argument in Section~\ref{sec:graph}.

\subsubsection{Estimating the comparison correlations}
\label{subsubsec:outline-completion}

Writing $\mathcal C_0$ for the fixed-shift correlation
involving $\alpha_{\mathcal P}(n)\beta_{\mathcal P}(n+h)$
that we wish to estimate, the graph argument yields a relation
of the form
\[
 \mathcal C_0+\sum_\nu c_\nu\mathcal C_\nu
       =O_A(L^{-A}),
\]
where the number of comparison terms and the total absolute
mass of their real coefficients $c_\nu$ are bounded by a
fixed power of $L$ (Proposition~\ref{prop:graph-comparison}). Here, suppressing smooth cutoffs and
additional weights, each $\mathcal C_\nu$ is
essentially of the form
\[
\mathbb E_{D_1,D_2} b_1(D_1)b_2(D_2)
\sum_n \alpha_{\mathcal P}(n)
\beta_{\mathcal P}(n+hD_1D_2),
\]
for certain bounded functions $b_1,b_2$.
Crucially, the fixed shift $h$ has been replaced by
an average over shifts $hD_1D_2$, where $D_1,D_2$
are independently chosen products of primes from
certain banks. This additional freedom makes
cancellation accessible through a more standard
circle-method argument similar to that of
\cite[Sections~5.3--5.6]{OpenAIGraph}.

Indeed, after Fourier inversion, we obtain a
multiplier involving bilinear averages of the form
\[
 \mathbb E_{D_1,D_2}
 b_1(D_1)b_2(D_2)e(\theta D_1D_2),
 \qquad |b_1|,|b_2|\le1,
\]
where $e(z)=\exp(2\pi iz)$.
The presence of two independent products allows us
to control this multiplier uniformly over the
bounded functions $b_1,b_2$, away from neighborhoods
of rationals with small denominators (Lemma~\ref{lem:auxiliary-product}).

The analysis of the resulting Fourier integral is
therefore reduced to the major arcs. There, Cauchy–Schwarz and divisor moments allow us to control the $\beta_{\mathcal P}$ factor, leaving us to exploit cancellation in $\alpha_{\mathcal P}$. The problem is then
essentially reduced to understanding twisted
variants of its Dirichlet polynomials
$\sum_n\alpha_{\mathcal P}(n)n^{it}$ (Lemma~\ref{lem:local-mellin-energy}).

For small values of $|t|$, we recall that $\alpha_{\mathcal{P}}$ arises from the difference between the divisor function and our density model $D_k$. After fixing
the small-prime part of $n$, the mean-value
contributions of the divisor function and its
model have exactly the same principal residue.
Subtracting the model therefore cancels the
entire principal contribution, yielding the
required cancellation at low frequencies (Lemma~\ref{lem:combined-mellin}).

For large values of $|t|$, we instead write
$n=pm$, with $p$ ranging over primes in a small
bank. We estimate the resulting prime sums using
the zero-free region of \cite[Theorem~1.1]{OpenAIZeroFree}
(see Lemma~\ref{lem:zf-smooth-prime}), while the remaining factors
are controlled by a mean-square estimate.
This is in the spirit of \cite{MR} (see also
\cite[Section~3]{SoundararajanBourbaki}),
although our restriction to banks containing
exactly one distinct prime divisor makes
the prime-factor decomposition almost immediate.
The auxiliary products $D_1,D_2$ are sufficiently
large compared with the extracted prime
to ensure that the remaining Dirichlet polynomials
are long enough for the required mean-square estimates.

Section~\ref{sec:analytic-services} establishes
the basic analytic estimates needed for this
major-arc argument, which is carried out in
Section~\ref{sec:arithmetic-bridge}.
The resulting savings are stronger than
any prescribed negative power of $L$ 
and therefore suffice to control the full
family of comparison terms, finishing the proof of
Theorem~\ref{thm:main}.

\subsubsection{Further results}
\label{subsec:further-results}

The strategy just described seems to adapt
to other shifted correlation problems, including
$\lambda_f(n)^2\lambda_g(n+h)^2$ for normalized Hecke eigenvalues
of fixed holomorphic cusp forms
(see~\cite[Section~1.3.1]{Mangerel2023})
and correlations of the indicator
of integers representable as sums of two squares
(as conjectured in \cite[Section~3]{ConnorsKeating1997}, ~\cite[Conjecture~2]{Smilansky2013}).
The general strategy also seems robust enough to give uniformity over
$0<|h|\ll x^{1- \varepsilon}$ and to refine
Theorem~\ref{thm:main} by recovering the expected polynomial main term to precision
$O_\varepsilon(x(\log x)^{\max\{k,l\}-2+\varepsilon})$.
We may return to these extensions in another article.

\subsection*{AI use}

A personal ChatGPT Pro subscription was used both in the exploratory research phase and during the writing process. 

\section{Preliminaries}
\label{sec:records}

We begin by recording some conventions, definitions and basic estimates that will be used throughout the article.

\subsection{The general setting} We write $F\ll G$ or $F=O(G)$ if $|F|\le C|G|$ for some
constant $C>0$. We write $F\asymp G$ if both $F\ll G$ and $G\ll F$.
Dependence on parameters is indicated by subscripts, but
dependence on the fixed divisor orders $k,l$, shift $h$ and smooth
weights is usually suppressed. We write $F=o(G)$ if $F/G\to0$
and $F\sim G$ if $F/G\to1$, as $x\to\infty$.

Throughout the article, we will write $L=\log x$ and $J=\log L$. We also write $e(t)=\exp(2\pi i t)$ and
$(a)_r=a(a-1)\cdots(a-r+1)$, with $(a)_0=1$.
If $\mathcal U$ is a finite set of primes, let
\[
 n_{\mathcal U}=\prod_{p\in\mathcal U}p^{v_p(n)},\qquad
 n_{\mathcal U'}=n/n_{\mathcal U}.
\]
We will call the first quantity the \emph{$\mathcal U$-record} of $n$ and sometimes omit mentioning $\mathcal{U}$ explicitly when it is clear from the context. In particular, records include prime powers.
We shall refer to integers supported on $\mathcal U$ as
$\mathcal U$-smooth.

During the arguments we will concentrate on integers having a prescribed number of distinct prime divisors in certain intervals. The set of primes in each of these intervals will be called \emph{a bank}. More precisely, it will suffice to choose two families $\mathcal G_s, \mathcal G_b$ of pairwise disjoint "small" and "big" banks $P_g$ given by intervals of primes satisfying:
\begin{equation}\label{eq:banks}
 \begin{aligned}
 P_g&\subset[\exp(L^{1/8}),\exp(L^{1/4})]
       &&(g\in\mathcal G_s),\\
 P_g&\subset[\exp(L^{1/3}),\exp(L^{2/5})]
       &&(g\in\mathcal G_b),\\
 \tfrac12&\le V_g:=\sum_{p\in P_g}p^{-1}\le2,
 &c_*J&\le |\mathcal G_s|,|\mathcal G_b|\le C_*J.
 \end{aligned}
\end{equation}
Here $c_*,C_*>0$ are absolute constants. For example, by Mertens' theorem we may take $\mathcal G_s, \mathcal G_b$ to index the sets of primes $P_g$ satisfying
$j<\log\log p\le j+1$ for a given integer $j$, whenever these are fully contained in the ranges given by  \eqref{eq:banks}.

We shall use the corresponding notations:
\begin{equation}\label{eq:fullrecord}
 y_0=\exp(L^{2/5}),\qquad \mathcal S=\{p:p\le y_0\},\qquad
 \mathcal C=\bigcup_{g\in\mathcal G_s\cup\mathcal G_b}P_g,\qquad
 V=\prod_{p\le y_0}(1-p^{-1}).
\end{equation}
We will frequently be working with sets $\mathcal P$ consisting of all the primes that belong to specific choices of banks. The selected banks will be called \emph{active} and $\mathcal P$ the active set of primes.
Thus $\mathcal P\subset\mathcal C\subset\mathcal S$.
We shall write $\mathcal G_{\mathcal P}=\{g:P_g\subset\mathcal P\}$ and
$\omega_g(n)=\#\{p\in P_g:p\mid n\}$.

\subsection{The density model}
We now describe the model we will use to approximate the divisor function. Writing $d_k(n) = d_k(n_{\mathcal S}) d_k(n_{\mathcal S'})$, we shall keep the factor depending on $\mathcal S$ and replace $d_k(n_{\mathcal S'})$ with its expected density, which we now proceed to estimate in a standard way (see e.g. \cite[Chapter~II.5]{Tenenbaum}). We consider the Dirichlet series over
integers avoiding $\mathcal S$,
\[
 \sum_{(n,\mathcal S)=1}\frac{d_k(n)}{n^s}
     =\zeta(s)^k\prod_{p\in\mathcal S}(1-p^{-s})^k
       \qquad(\Re s>1),
\]
and define the corresponding density:
\begin{equation}\label{eq:density-def}
 \tilde{M}_{k,\mathcal S}(t):=\Res_{s=1}
 \left\{\zeta(s)^k\prod_{p\in\mathcal S}(1-p^{-s})^k
                                    t^{s-1}\right\}.
\end{equation}
For a more explicit expression, setting $z=s-1$, rewriting the expression inside the residue as $z^{-k} H(z) e^{z \log t}$ with the entire function
\[
 H(z)=\left\{z\zeta(1+z)\prod_{p\in\mathcal S}
                                  (1-p^{-1-z})\right\}^{k}
\]
and looking at its residue through the Taylor expansion of its factors, we obtain
\begin{equation}\label{eq:density-finite}
 \tilde{M}_{k,\mathcal S}(t)=\sum_{j=0}^{k-1}
 \frac{H^{(j)}(0)}{j!}
 \frac{(\log t)^{k-1-j}}{(k-1-j)!},\qquad H(0)=V^k.
\end{equation}
The size restriction on the primes of $\mathcal S$ ensures that the corresponding Euler products do not distort the leading term. Indeed, writing $H(z) = V^k e^{F(z)}$ locally near $z=0$ and estimating the derivatives of $F(z)$ using
\begin{equation}\label{eq:record-derivatives}
 \sum_{p\le y_0}\frac{(\log p)^j}{p}\ll_j L^{2j/5}\qquad(j\ge1),
\end{equation}
we deduce that $|H^{(j)}(0)|\ll_{k}V^kL^{2j/5}$ for $0\le j\le k-1$, from which it follows that for every $x^{1/2}\le t\le x^2$, say, we have
\begin{equation}\label{eq:density-positive}
 \tilde{M}_{k,\mathcal S}(t)=
 \frac{V^k(\log t)^{k-1}}{(k-1)!}
       \{1+O_k(L^{-3/5})\}>0
      .
\end{equation}
Since the range $x^{1/2} \le t \le x^2$ amply covers the region where we will need the density model to coincide with the above residue, it suffices for us to define ${M}_{k,\mathcal S}(t) = \tilde{M}_{k,\mathcal S}(t)$ for $t\ge x^{1/2}$ and extend it to the range $0 < t < x^{1/2}$ in any smooth non-negative way that preserves, for $0<t\le x^2$, the bound 
\begin{equation}
\label{Mkbound}
 {M}_{k,\mathcal S}(t) \ll_k V^kL^{k-1}.
 \end{equation}
 For $k=1$, we may of course simply take $M_{1,\mathcal S}\equiv V$. 

We now define the model for $d_k$ and its corresponding remainder as
\begin{equation}\label{eq:model-def}
 D_k(n)=d_k(n_{\mathcal S})
              \frac{M_{k,\mathcal S}(n_{\mathcal S'})}{V},
 \qquad \xi_k(n)=d_k(n)-D_k(n).
\end{equation}
The factor $1/V$ normalizes the density $M_{k,\mathcal S}$
by the density $M_{1,\mathcal S}=V$ of integers avoiding
$\mathcal S$. Thus $M_{k,\mathcal S}/V$ models the mean
divisor weight among these integers. In particular, we have
$0\le D_k(n)\ll_k L^{k-1}d_k(n)$ for $n\le x^2$.

We record the following immediate consequence of \eqref{eq:density-positive} that shall be needed later.

\begin{lemma}\label{lem:density-size}
If $n\asymp x$ and $1 \le A\leq \exp(\sqrt{L})$, then
\begin{equation}\label{eq:density-leading}
 \frac{M_{j,\mathcal S}(n/A)}V
  =\frac{V^{j-1}L^{j-1}}{(j-1)!}\{1+O_j(L^{-1/2})\}.
\end{equation}
\end{lemma}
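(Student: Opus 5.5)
Since $n\asymp x$ and $1\le A\le\exp(\sqrt L)$, the point $t=n/A$ lies in $[x^{1/2},x^2]$ for large $x$: indeed $\log t=\log n-\log A\ge L-O(1)-\sqrt L\ge L/2$. Hence $M_{j,\mathcal S}(t)=\tilde M_{j,\mathcal S}(t)$ by definition, and we may apply \eqref{eq:density-positive} with $k=j$. This gives
\[
\frac{M_{j,\mathcal S}(n/A)}{V}=\frac{V^{j-1}(\log t)^{j-1}}{(j-1)!}\{1+O_j(L^{-3/5})\}.
\]

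It then suffices to compare $(\log t)^{j-1}$ with $L^{j-1}$. We have $\log t=L+O(1)-\log A$ with $0\le\log A\le\sqrt L$. If we only knew $\log A\le\sqrt L$, the ratio would be $\log t/L=1+O(L^{-1/2})$. Raising this to the fixed power $j-1$ gives $(\log t)^{j-1}=L^{j-1}\{1+O_j(L^{-1/2})\}$. Combining with the $L^{-3/5}$ error, which is smaller, yields \eqref{eq:density-leading}.

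The case $j=1$ is trivial, since $M_{1,\mathcal S}\equiv V$. The only point needing care is checking that $n/A$ stays in the range where $M$ coincides with the residue $\tilde M$, which the bound $A\le \exp(\sqrt L)$ ensures. There is no real obstacle here.
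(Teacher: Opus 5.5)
Your proof is correct and follows the paper's approach. The paper states the lemma as an immediate consequence of \eqref{eq:density-positive}, and your argument is that consequence made explicit: you check that $t=n/A\in[x^{1/2},x^2]$, apply that asymptotic, and absorb $\log t=L+O(\sqrt L)$ into the $O_j(L^{-1/2})$ error.
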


\subsection{The invariant factors}
Remember we will work with certain "active" sets of primes $\mathcal{P}$ consisting of the union of some of the prime banks we have defined. The key advantage we will derive from separating the factors of a given $n$ as $n=n_{\mathcal{P}}n_{\mathcal{P}'}$ is that the primes in $\mathcal{P}$ are sufficiently small for functions depending only on $n_{\mathcal{P}}$ to become tractable, while functions depending only on $n_{\mathcal{P}'}$ are then invariant under multiplication by elements of $\mathcal{P}$. In particular, for the latter we will be able to exploit the large number of symmetries caused by this invariance. With this in mind, for every such choice of $\mathcal{P}$ we define
\begin{equation}\label{eq:core-def}
 \begin{aligned}
 \alpha_{\mathcal P}(n)
   &=d_k(n_{\mathcal P'})
     -d_k(n_{\mathcal S\setminus\mathcal P})
                \frac{M_{k,\mathcal S}(n_{\mathcal S'})}{V},\\
 \beta_{\mathcal P}(n)&=d_l(n_{\mathcal P'}).
 \end{aligned}
\end{equation}
For every $p\in\mathcal P$, we then have $ \alpha_{\mathcal P}(pn)=\alpha_{\mathcal P}(n),
 \beta_{\mathcal P}(pn)=\beta_{\mathcal P}(n)$
and by multiplicativity
\begin{equation}\label{eq:core-factor}
 \xi_k(n)=d_k(n_{\mathcal P})\alpha_{\mathcal P}(n),\qquad
 d_l(n)=d_l(n_{\mathcal P})\beta_{\mathcal P}(n).
\end{equation}
Notice that taking $A=n_{\mathcal S}$ and $t=n/A$ we may simply write
\begin{equation}\label{eq:record-centered-core}
 \alpha_{\mathcal P}(n)=d_k(A_{\mathcal S\setminus\mathcal P})
                   \{d_k(t)-M_{k,\mathcal S}(t)/V\},
 \qquad (t,\mathcal S)=1.
\end{equation}

\subsection{Moments and truncation}
As in previous work (e.g. \cite{HR, MR, Pilatte}), integers with atypical factorizations in the original interval will be problematic and we will need some global control on them. Put
\[
 \theta=(\log y_0)^{-1}=L^{-2/5},\qquad E=\exp(\sqrt L).
\]
By Rankin's inequality, for each fixed $r\ge1$,
\[
 \mathbf1_{\{n_{\mathcal S}>E\}}
       \le E^{-r\theta}n_{\mathcal S}^{r\theta},\qquad
 E^{-r\theta}=\exp(-rL^{1/10}).
\]
We will need the following estimate that takes into account extra divisor weights.

\begin{lemma}[Elementary tilted moments]\label{lem:record-moments}
For fixed integers \(a,r\geq1\), fixed \(c>0\), fixed \(0<\rho<C\), and
\(x^\rho\leq N\leq x^C\), one has
\begin{equation}
 \sum_{n\leq cN}d_a(n)^r n_{\mathcal S}^{r\theta}
       \ll N L^{a^r-1}.
 \label{eq:tilted-moment}
\end{equation}
In particular,
\begin{equation}
\sum_{n\leq cN}d_a(n)^r
             \mathbf1_{\{n_{\mathcal S}>E\}}
       \ll N L^{a^r-1}\exp(-rL^{1/10}).
 \label{eq:full-record-tail}
\end{equation}
\end{lemma}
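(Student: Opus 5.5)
The plan is to treat the weight $f(n)=d_a(n)^r n_{\mathcal S}^{r\theta}$ as a non-negative multiplicative function and apply a standard upper bound for mean values of such functions (Shiu's theorem, or more simply the Hall--Tenenbaum / Rankin-type bound $\sum_{n\le N}f(n)\ll \frac{N}{\log N}\exp\bigl(\sum_{p\le N}f(p)/p\bigr)$). This bound holds for multiplicative $f\ge0$ with $f(p)\le A$ and $\sum_{p}\sum_{\nu\ge2}f(p^\nu)/p^\nu\ll1$. Here
\[
f(p^\nu)=d_a(p^\nu)^r\,p^{\nu r\theta}\quad (p\le y_0),\qquad f(p^\nu)=d_a(p^\nu)^r\quad(p>y_0).
\]
For $p\le y_0$ we have $p^{r\theta}\le e^{r}$, since $\theta\log p\le1$. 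So $f(p)\le a^re^r$ is bounded. Prime powers are also harmless: $d_a(p^\nu)^r\ll_{a,r}(\nu+1)^{(a-1)r}$ and $p^{\nu r\theta}\le e^{\nu r}$. Hence $\sum_{\nu\ge2}f(p^\nu)/p^\nu\ll p^{-2}$ for $p$ large, and the small primes contribute $O(1)$ trivially.

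The main computation is $\sum_{p\le N}f(p)/p$. Primes $p>y_0$ give $a^r\sum_{y_0<p\le N}1/p=a^r(\log\log N-\log\log y_0)+O(1)$. Primes $p\le y_0$ give $a^r\sum_{p\le y_0}p^{r\theta}/p$. Write $p^{r\theta}=1+O(r\theta\log p)$, which is valid uniformly because $r\theta\log p\le r$. Then, by Mertens and \eqref{eq:record-derivatives} with $j=1$,
\[
\sum_{p\le y_0}\frac{p^{r\theta}-1}{p}\ll \theta\sum_{p\le y_0}\frac{\log p}{p}\ll \theta\log y_0=1 .
\]
Thus $\sum_{p\le N}f(p)/p=a^r\log\log N+O(1)=a^r\log L+O_{\rho,C}(1)$, using $\log N\asymp L$. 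Inserting this into the mean-value bound gives $\sum_{n\le cN}f(n)\ll \frac{N}{L}L^{a^r}=NL^{a^r-1}$, which is \eqref{eq:tilted-moment}. The constant $c$ only rescales $N$.

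The step most likely to need care is applying a Shiu/Hall--Tenenbaum type bound with $f$ depending on $x$ through $\theta$ and $y_0$. I would make the implied constants depend only on the uniform bounds verified above ($f(p)\le a^re^r$ and the prime-power sum being $O(1)$), which hold uniformly in $x$. If one prefers a self-contained route, the cleanest option is the elementary Rankin/Halberstam--Richert argument: bound $\sum_{n\le M}f(n)\log n\le\sum_{p^\nu m\le M}f(p^\nu)f(m)\log p^\nu$ and use Chebyshev on the inner prime sum. This gives $\sum_{n\le M}f(n)\ll \frac{M}{\log M}\sum_{n\le M}f(n)/n$. One then bounds $\sum_{n\le M}f(n)/n\le\prod_{p\le M}\bigl(1+\sum_\nu f(p^\nu)/p^\nu\bigr)\ll L^{a^r}$ by the computation above.

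Finally, \eqref{eq:full-record-tail} follows immediately from Rankin's inequality stated before the lemma. Since $\mathbf1_{\{n_{\mathcal S}>E\}}\le E^{-r\theta}n_{\mathcal S}^{r\theta}$ and $E^{-r\theta}=\exp(-rL^{1/10})$, multiplying by $d_a(n)^r$, summing, and invoking \eqref{eq:tilted-moment} gives the claim.
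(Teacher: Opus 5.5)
Your proof is correct, but it saves the logarithm by a different mechanism from the paper.

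\textbf{The paper's route.} It first majorizes $d_a(n)^r\le d_{a^r}(n)$. It then observes that $f(n)=d_{a^r}(n)n_{\mathcal S}^{r\theta}$ is nondecreasing on prime powers, so $g=f*\mu\ge0$. This gives $\sum_{n\le X}f(n)=\sum_{d\le X}g(d)\lfloor X/d\rfloor\le X\prod_{p\le X}(1-p^{-1})\sum_{\nu\ge0}f(p^\nu)p^{-\nu}$. The factors $(1-p^{-1})$ from the M\"obius convolution yield $L^{a^r-1}$ directly. This follows from comparing with the exact product $\prod_p(1-p^{-1})^{1-a^r}$, and that comparison is essentially your estimate $\sum_{p\le y_0}(p^{r\theta}-1)/p\ll1$.

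\textbf{Your route.} You use the Rankin/Halberstam--Richert $\log n$ trick, or alternatively Shiu's theorem. This needs Chebyshev's bound and a prime-power hypothesis, but no monotonicity. So it applies to any nonnegative multiplicative weight with bounded prime values and controlled prime powers. The paper's argument is shorter because this particular $f$ is monotone on prime powers, which builds the logarithmic saving into the Euler product.

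\textbf{Two small points to tidy.}
\begin{enumerate}
\item The inequality $\sum_{n\le X}f(n)\le(\kappa_1+\kappa_2+1)\frac{X}{\log X}\sum_{n\le X}f(n)/n$ needs two hypotheses: $\sum_{p\le y}f(p)\log p\le\kappa_1y$, and $\sum_p\sum_{\nu\ge2}f(p^\nu)\log(p^\nu)\,p^{-\nu}\le\kappa_2$. The second must carry the logarithmic weight, not the unweighted version you stated. In your self-contained argument you need $\sum_{p^\nu\le y,\,\nu\ge2}f(p^\nu)\log p^\nu\ll y$. Your estimates do give this (for $p\ge2e^r$ the sum is $\ll_{a,r}(\log p)/p^2$).
\item For the finitely many primes $p<2e^r$, the bound $p^{r\theta}\le e^r$ does not make $\sum_\nu f(p^\nu)p^{-\nu}$ converge, so ``trivially'' is not quite right. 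Instead use $p^{r\theta}\le p^{1/2}$, valid once $r\theta\le1/2$ (true for large $x$, since $\theta\to0$). This bounds those local factors uniformly in $x$.
\end{enumerate}
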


\begin{proof}
Recall the elementary estimate $d_a(n)^r\leq d_{a^r}(n)$ and write  \(A=a^r\) and \(f(n)=d_A(n)n_{\mathcal S}^{r\theta}\).
The prime-power values of $f$ are nondecreasing, so $g=f*\mu$ is
nonnegative, where $\mu$ is the M\"obius function. Therefore, by M\"obius inversion
\[
\sum_{n\leq cN} f(n)
 \leq cN\sum_{d\leq cN}\frac{g(d)}d
 \leq cN\prod_{p\leq cN}\left\{(1-p^{-1})
                         \sum_{\nu\geq0}\frac{f(p^\nu)}{p^\nu}\right\}.
\]
Notice that replacing $f$ with $d_A$ the product becomes $\prod_{p \le c N} (1-p^{-1})^{1-A} = O(L^{A-1})$, by Mertens' estimate.
But for sufficiently large \(x\), the logarithm of the ratio of the product we are interested in to that product is bounded by a constant times
\[
\sum_{p\in\mathcal S}\frac{p^{r\theta}-1}{p}
       +\sum_p p^{-3/2}
 \ll_r \theta\sum_{p\leq\exp(L^{2/5})}\frac{\log p}{p}+1
 \ll_r1.
\]
This proves \eqref{eq:tilted-moment}.  Multiplying by
\(E^{-r\theta}\) proves \eqref{eq:full-record-tail}.
\end{proof}

As a quick application, we record the following uniform weighted estimate for the moments of $\alpha_{\mathcal{P}}$, $\beta_{\mathcal{P}}$ which we shall also need later.

\begin{lemma}\label{lem:marked-moments}
Fix integers $M,r\ge1$ and an active set of primes $\mathcal P$. For each $n$, let $\mathcal L_M(n)$ be the (possibly empty) set of ordered
lists $\boldsymbol p=(p_{g,i})$ consisting of $M$ distinct primes
$p_{g,i}\in P_g$ dividing $n$ for each active bank $g$. Give $(n,\boldsymbol p)$ mass
$w_M=\prod_{g\in\mathcal G_{\mathcal P}}V_g^{-M}$. For
$N=x^{1+o(1)}$ and every $x$ sufficiently large with respect to $M$, we have
\[
 \sum_{n\asymp N}\ \sum_{\boldsymbol p\in\mathcal L_M(n)}
 w_M\bigl(|\alpha_{\mathcal P}(n)|^r+
                 |\beta_{\mathcal P}(n)|^r\bigr)
        \ll N L^{C_r},
\]
with $C_r$ and the implied constant independent of $M$, the choices of the lists and the active set of primes $\mathcal{P}$ (but permitted to depend on $r,k,l$).
\end{lemma}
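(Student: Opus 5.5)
We first bound $|\alpha_{\mathcal P}(n)|^r+|\beta_{\mathcal P}(n)|^r$ by a pointwise divisor-type weight that no longer depends on $\mathcal P$. By \eqref{eq:record-centered-core}, $|\alpha_{\mathcal P}(n)|\le d_k(n_{\mathcal S\setminus\mathcal P})\{d_k(n_{\mathcal S'})+M_{k,\mathcal S}(n_{\mathcal S'})/V\}$. By \eqref{Mkbound}, the second term satisfies $M_{k,\mathcal S}(n_{\mathcal S'})/V\ll V^{k-1}L^{k-1}\le L^{k-1}$. Hence
\[
|\alpha_{\mathcal P}(n)|^r+|\beta_{\mathcal P}(n)|^r\ll_r L^{r(k-1)}d_k(n)^r+d_l(n)^r\ll L^{r(k-1)}d_{a}(n)^r ,
\]
where $a=\max(k,l)$. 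This uses $d_k(n_{\mathcal S\setminus \mathcal P})d_k(n_{\mathcal S'})\le d_k(n)$ and $d_l(n_{\mathcal P'})\le d_l(n)$. The problem is therefore reduced to proving
\[
\sum_{n\asymp N}d_a(n)^r\, w_M\,\#\mathcal L_M(n)\ll N L^{a^r-1},
\]
uniformly in $M$ and $\mathcal P$.

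Next we expand the count $\#\mathcal L_M(n)$. Summing over the lists first, we get
\[
\sum_{\boldsymbol p}w_M\sum_{\substack{n\asymp N\\ \Pi(\boldsymbol p)\mid n}}d_a(n)^r,
\]
where $\Pi(\boldsymbol p)=\prod p_{g,i}$ is squarefree because the primes are distinct. Write $n=\Pi m$ and use submultiplicativity $d_a(\Pi m)\le d_a(\Pi)d_a(m)=a^{\omega(\Pi)}d_a(m)$. The inner sum is then at most $a^{r|\boldsymbol p|}\sum_{m\ll N/\Pi}d_a(m)^r$. The factor $a^{r|\boldsymbol p|}$ grows like $a^{rM|\mathcal G_{\mathcal P}|}$, which is up to $a^{rMC_*J}$, a power of $L$ depending on $M$. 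That is not acceptable, so this crude route must be refined. Instead of the submultiplicative bound, we exploit that the lists are ordered and compare with the natural density. Summing over ordered lists of distinct primes with $\sum_{p\in P_g}1/p=V_g$ gives $\sum_{\boldsymbol p}1/\Pi\le\prod_g V_g^{M}$, which cancels $w_M$ exactly. The real issue is therefore the weight $d_a(n)^r$ at the extracted primes.

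To handle that weight we use Lemma \ref{lem:record-moments} in its Möbius/Euler-product form rather than its stated form. We rerun its proof for the multiplicative function $f(n)=d_A(n)$, where $A=a^r$, with the additional weight counting marked primes. For each active bank this gives a local factor
\[
\prod_{p\in P_g}\Bigl(1+\tfrac{A-1}{p}+\dots\Bigr)
\]
together with the marking generating function. Concretely, we write
\[
\sum_{\boldsymbol p}w_M\mathbf 1_{\Pi\mid n}=w_M\,(\omega_g\text{-falling factorials})=\prod_g (\omega_g(n))_M/V_g^M .
\]
Then the quantity to bound is $\sum_n d_A(n)\prod_g(\omega_g(n))_M V_g^{-M}$. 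Introducing the generating variable $z_g^{\omega_g(n)}$, this is a sum of a multiplicative function. Its Euler factor at $p\in P_g$ is modified from $1+A/p$ to $1+Az_g/p$, and the $M$-th derivative at $z_g=1$ divided by $V_g^M$ equals $A^M\exp(\cdots)$. This still loses $A^M$ per bank.

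The main obstacle is exactly this loss. Since $A^{M|\mathcal G_{\mathcal P}|}$ is not $O(L^{C_r})$ uniformly in $M$, the statement must be using the mass $w_M$ with some compensation I would need to locate. One option is that $V_g$ should be read as weighted appropriately. Another is that one should pass to a larger power of the divisor function, so that the marked primes are absorbed into the $L^{A-1}$ from Mertens. The first thing I would try is to bound the ordered-list count by the inequality $(\omega_g)_M\le \omega_g^M$ and Rankin-type exponential moments within banks, checking whether the bank sizes $V_g\ge1/2$ are what make the constant independent of $M$.
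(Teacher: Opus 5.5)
\paragraph{Verdict.} Your proposal has a genuine gap, and it comes from your first step rather than from any later estimate.

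\paragraph{Why the reduction fails.} You enlarge $d_k(n_{\mathcal S\setminus\mathcal P})d_k(n_{\mathcal S'})=d_k(n_{\mathcal P'})$ to $d_k(n)$, and $d_l(n_{\mathcal P'})$ to $d_l(n)$. After this, the inequality you reduce to, namely $\sum_{n\asymp N}d_a(n)^r w_M\#\mathcal L_M(n)\ll NL^{a^r-1}$ uniformly in $M$, is false. Write $A=a^r\ge2$ and fix a list with product $\Pi$. For $(m,\Pi)=1$ we have $d_a(\Pi m)^r=A^{\omega(\Pi)}d_a(m)^r$, so this list contributes $\gg A^{M|\mathcal G_{\mathcal P}|}(N/\Pi)L^{A-1}$. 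Since $\sum_{\boldsymbol p}w_M/\Pi=1+o(1)$, the whole sum is $\gg NL^{A-1}A^{M|\mathcal G_{\mathcal P}|}$. Here $|\mathcal G_{\mathcal P}|$ may be $\asymp J=\log L$, so this is an extra factor $L^{cM\log A}$. The $A^M$-per-bank loss you found is therefore real, not an artifact of submultiplicativity or of your generating-function computation. None of the remedies you list (reinterpreting $V_g$, passing to larger divisor powers, Rankin moments inside banks) can remove it.

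\paragraph{The missing idea.} What you need is the dilation invariance built into the factors: $\alpha_{\mathcal P}(pn)=\alpha_{\mathcal P}(n)$ and $\beta_{\mathcal P}(pn)=\beta_{\mathcal P}(n)$ for every $p\in\mathcal P$.

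\begin{enumerate}
\item Fix a list whose product $\Pi$ is squarefree, supported on $\mathcal P$, and divides $n$. Invariance gives $\alpha_{\mathcal P}(n)=\alpha_{\mathcal P}(n/\Pi)$ and $\beta_{\mathcal P}(n)=\beta_{\mathcal P}(n/\Pi)$.
\item By \eqref{Mkbound}, with $K=\max(k,l)$, this yields $|\alpha_{\mathcal P}(n)|\ll L^{k-1}d_K(n/\Pi)$ and $|\beta_{\mathcal P}(n)|\le d_K(n/\Pi)$. So the marked primes carry no divisor weight at all.
\item Since $\Pi=\exp(O_M(JL^{2/5}))=x^{o(1)}$, Lemma~\ref{lem:record-moments} applies at scale $N/\Pi$. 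This is exactly where $x$ must be large in terms of $M$. It gives $\sum_{n\asymp N,\,\Pi\mid n}d_K(n/\Pi)^r\ll(N/\Pi)L^{K^r-1}$, with a constant independent of $\Pi$.
\item Finally, $\sum_{\boldsymbol p}w_M/\Pi\le\prod_g\bigl(\sum_{p\in P_g}(V_gp)^{-1}\bigr)^M=1$. This is the cancellation you had already found.
\end{enumerate}

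This proves the lemma with $C_r=r(k-1)+K^r-1$, independent of $M$ and $\mathcal P$. Your generating-function approach would also work once the invariance is kept: the Euler factor at $p\in P_g$ becomes $1+z_g/(p-1)$, which no longer carries the factor $A$.
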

\begin{proof}
Fix a list with corresponding product $P=\prod_{g,i}p_{g,i}$.
Since $P\mid n$ and all its primes are active, we see from (\ref{Mkbound}) that
$|\alpha_{\mathcal P}(n)|\ll L^{k-1}d_K(n/P)$ and
$|\beta_{\mathcal P}(n)|\le d_K(n/P)$ for
$K=\max(k,l)$. Here
$P=\exp(O_M(JL^{2/5}))=x^{o(1)}$, so Lemma~\ref{lem:record-moments}
at scale $N/P$ gives
$\sum_{n\asymp N,\,P\mid n}d_K(n/P)^r\ll(N/P)L^{K^r-1}$.
Thus, for the contribution of this particular list we could take $C_r=r(k-1)+K^r-1$.
Finally, summing over all lists costs at most
\[
\sum_{\text{lists}}\frac1P\prod_gV_g^{-M}
 \le\prod_g\left(\sum_{p\in P_g}\frac1{V_gp}\right)^M=1.
\]
\end{proof}

\section{Evaluation of the density model}
\label{sec:scalar-proxy}

This section will be devoted to the proof of the following correlation asymptotic for the density model.

\begin{proposition}\label{prop:scalar-proxy}
Fix integers $k,l\geq1$, a nonzero integer $h$, and
$\Phi\in C_c^\infty((0,\infty))$. Then
\begin{equation}
\begin{aligned}
 \sum_{n\geq1}\Phi(n/x)D_k(n)d_l(n+h)
 &=\frac{\mathfrak S_{k,l}(h)}{(k-1)!(l-1)!}
       xL^{k+l-2}\int_0^\infty\Phi(u)\,du\\
 &\quad+O_{k,l,h,\Phi}\!\left(xL^{k+l-5/2}\right),
 \end{aligned}
 \label{eq:scalar-proxy-asymptotic}
\end{equation}
for sufficiently large $x$.
Here $\mathfrak S_{k,l}(h)$ is the singular series defined in
\eqref{eq:singular}; its product converges to a positive number.
\end{proposition}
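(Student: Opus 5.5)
We truncate, substitute the density model, and reduce to divisor sums in arithmetic progressions with smooth moduli. First, by \eqref{eq:full-record-tail} together with Cauchy--Schwarz and the bound $D_k(n)\ll L^{k-1}d_k(n)$, the integers with $n_{\mathcal S}>E$ contribute $O(xL^{-100})$, so we restrict to $A:=n_{\mathcal S}\le E$. Writing $n=Am$ with $(m,\mathcal S)=1$, the left-hand side of \eqref{eq:scalar-proxy-asymptotic} becomes
\[
 \sum_{A\le E,\ A\ \mathcal S\text{-smooth}} d_k(A)\sum_{(m,\mathcal S)=1}\Phi(Am/x)\frac{M_{k,\mathcal S}(m)}{V}\,d_l(Am+h).
\]
On the support of $\Phi$ we have $m\asymp x/A$ with $A\le E$. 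In this range Lemma~\ref{lem:density-size} lets us replace $M_{k,\mathcal S}(m)/V$ by $V^{k-1}L^{k-1}/(k-1)!$ at relative cost $O(L^{-1/2})$. Since the remaining sum is $O(xL^{l-1}\cdot V^{-(k-1)}\cdots)$, this cost is acceptable. It is also the source of the $L^{k+l-5/2}$ error. What remains is to evaluate
\[
 \frac{V^{k-1}L^{k-1}}{(k-1)!}\sum_{A} d_k(A)\sum_{(m,\mathcal S)=1}\Phi(Am/x)\,d_l(Am+h).
\]

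Next we detect the condition $(m,\mathcal S)=1$. We expand it with a fundamental-lemma sieve, or by Möbius over $\mathcal S$-smooth $e$ truncated via Rankin at $e\le E$. The inner sums then become $\sum_{n\equiv 0\ (Ae)}\Phi(n/x)d_l(n+h)$, i.e. sums of $d_l$ over the progression $n'\equiv h \pmod q$ with $q=Ae\le E^2=x^{o(1)}$. For such tiny moduli the classical asymptotic for $d_l$ in progressions applies: the Selberg--Delange or Perron method with Dirichlet $L$-functions, where the zero-free region of \cite{OpenAIZeroFree} or the classical one suffices. It gives
\[
 \sum_{n'\equiv h\ (q)}\Phi(n'/x)d_l(n')=\frac{x}{q}\,g_l(q,h)\,\frac{L^{l-1}}{(l-1)!}\int\Phi\,\{1+O(L^{-1}(\log q)\cdots)\},
\]
where $g_l(q,h)$ is the multiplicative local density $\prod_{p\mid q}$ (density of $d_l(p^{v_p(u)})$ on $u\equiv h$ mod $p^{v_p(q)}$, normalized). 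The $(\log q)$-dependence enters only through lower-order terms of size $\ll L^{l-2}(\log q)$. Since $\log q\le 2\sqrt L$, these are $O(L^{l-3/2})$ and remain acceptable.

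Summing back over $A$ and $e$, the main term factorizes as an Euler product over $p\in\mathcal S$. For $p\in\mathcal S$, the local factor combines $d_k(p^{v_p(u)})$, the Möbius sieve weight, and the density of $d_l$. After collecting the factor $V^{k-1}$ with the normalization $(1-p^{-1})^{l-1}$ implicit in $L^{l-1}$, this factor equals $(1-p^{-1})^{k+l-2}I_{p,h}$. For $p\notin\mathcal S$ (so $p>y_0$), the sieve has removed $p\mid n$ altogether, and the factor coming from $d_l$ alone is $(1-p^{-1})^{l-1}\cdot$(density of $d_l$ on units); these contribute $1+O(p^{-2})$ for $p\nmid h$. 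Comparing with $I_{p,h}$, whose definition keeps $d_k(p^{v_p(u)})$, we find the discrepancy is $1+O(k/p^2)$ for $p>y_0$, $p\nmid h$. The product of these discrepancies is $1+O(y_0^{-1})$. Because the singular series converges absolutely, with $\prod_p(1-p^{-1})^{k+l-2}I_{p,h}$ having factors $1+O(p^{-2})$ for $p\nmid h$ and positive factors otherwise, the main term reduces to $\Sg_{k,l}(h)$ times the stated expression.

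We expect the main obstacle to be the uniform error control as the sums over $A$ and $e$ are reassembled. The progression asymptotic must hold with error $\ll x q^{-1}d(q)^{O(1)}L^{l-3/2}$ uniformly for $q\le E^2$. Moreover, the weighted sums $\sum_{A,e}d_k(A)|\mu(e)|d(Ae)^{O(1)}/(Ae)$ over $\mathcal S$-smooth integers are of size $\prod_{p\le y_0}(1+O(1/p))\ll L^{O(1)}$, which is \emph{not} obviously small. To avoid this loss, we would instead keep the sieve exact via the Euler product structure: define the multiplicative weight directly, pass to Dirichlet series in two variables, and verify that the error terms carry a factor $(\log q)/L$ whose weighted sum is $\ll L^{2/5}\cdot L^{-1}$ times the main mass. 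This is justified by \eqref{eq:record-derivatives} with $j=1$, which yields the saving $L^{-3/5}\le L^{-1/2}$.
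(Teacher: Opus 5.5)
Your architecture matches the paper's. You replace $D_k$ by $B_kd_k(n_{\mathcal S})$ using Lemma~\ref{lem:density-size}, discard $n_{\mathcal S}>E$ by Rankin, reduce to sums of $d_l$ over progressions with $\mathcal S$-smooth moduli of size $x^{o(1)}$, evaluate these by a residue, and reassemble an Euler product over $\mathcal S$. But the step you single out as the main obstacle is left open, and as you set it up it fails.

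Your coefficients $d_k(A)\mu(e)$ are signed. In absolute value, $\sum_{A,e}d_k(A)|\mu(e)|/(Ae)\asymp V^{-k-1}$, whereas the main term carries only the mass $V^{-(k-1)}$; that is exactly what cancels the $V^{k-1}$ in $B_k$. The loss $V^{-2}\asymp L^{4/5}$ exceeds any saving available per modulus. Even with your refined lower-order terms $\ll xq^{-1}L^{l-2}\log q$, summing with these absolute weights gives a bound of order $B_kxL^{l-2}V^{-k-1}L^{2/5}\asymp xL^{k+l-9/5}$, which is larger than the main term.

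The uniformity you ask for, $xq^{-1}d(q)^{O(1)}L^{l-3/2}$, is also too weak: any $d(q)^{O(1)}$ factor costs a further power of $V^{-1}$ after summation. Your remedy (``keep the sieve exact \ldots\ Dirichlet series in two variables'') does not say why the weighted error sum should be comparable to the main mass. Moreover, \eqref{eq:record-derivatives} only delivers the claimed $L^{-3/5}$ saving once the weights are nonnegative with harmonic sum $\asymp V^{-(k-1)}$.

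The missing observation is that there is nothing to sieve. Your inner sums depend only on $d=Ae$, and $\sum_{Ae=d}d_k(A)\mu(e)=(d_k*\mu)(d)=d_{k-1}(d)\ge0$. Equivalently, $d_k(n_{\mathcal S})=\sum_{d\mid n,\ d_{\mathcal S}=d}d_{k-1}(d)$, which is the paper's \eqref{eq:positive-proxy-divisors}. With this positive expansion the rest goes through:
\begin{itemize}
\item The truncation to $d\le E$ is a one-line Rankin bound on a positive sum.
\item Each progression sum can be shifted straight to $\Re s=1/2$. No zero-free region is needed, since $L(s,\chi)^l$ is entire for $\chi\ne\chi_0$, and the error $O(x^{1/2}q^{l/2})$ is harmless for $q\le E$.
\item In the paper's notation, the residue equals $\frac{xL^{l-1}}{(l-1)!}\kappa_{d,h}\int\Phi+O(xL^{l-3/2}/d)$ with no divisor-function loss. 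This is because $K_{p^e,h}(1)\le p^{-e}$ for $p\nmid h$ and $\sum_{p\mid d}(\log p)^j/p\le(\log d)^j$.
\item Since $\sum_{d\ \mathcal S\text{-smooth}}d_{k-1}(d)/d=V^{-(k-1)}$ exactly, the total error is $O(xB_kL^{l-3/2}V^{-(k-1)})=O(xL^{k+l-5/2})$.
\end{itemize}
Your Euler-product phrasing also works once this identity is used. The reassembled main term is then the residue of $\zeta(s)^l\prod_{p\in\mathcal S}\sum_e d_{k-1}(p^e)K_{p^e,h}(s)$, whose local factors are $1+O(1/p)$ with logarithmic derivatives $O((\log p)^j/p)$.

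One smaller correction: for $p>y_0$ nothing is sieved. The moduli are $\mathcal S$-smooth, so there is simply no local factor there, and the omitted factors $c_{p,h}=1+O(p^{-2})$ of $\Sg_{k,l}(h)$ cost $O(y_0^{-1})$.
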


Put
\[
B_k=\frac{V^{k-1}L^{k-1}}{(k-1)!},\qquad
 D_k^0(n)=B_kd_k(n_{\mathcal S}).
\]
So $D_k^0(n)$ is what we obtain if we replace the factor $M_{k,\mathcal S}(n_{\mathcal{S}'})/V$ in the definition of $D_k(n)$ by its dominant term $B_k$. We first explain why it suffices to prove the same asymptotic for every smooth weight with $D_k^0$ in place of $D_k$.

For \(n\asymp x\) with \(n_{\mathcal S}\le E\), Lemma~\ref{lem:density-size} applied with \(A=n_{\mathcal S}\) gives
\[
|D_k(n)-D_k^0(n)|
\ll L^{-1/2}D_k^0(n).
\]
Choose a nonnegative smooth majorant of \(|\Phi|\) with compact support in \((0,\infty)\). Assuming the asserted asymptotic for \(D_k^0\) for every smooth weight, its application to this majorant bounds the sum against \(D_k^0(n)d_l(n+h)\) by \(O(xL^{k+l-2})\). Consequently, the difference restricted to \(n_{\mathcal S}\le E\) is \(O(xL^{k+l-5/2})\). Finally, the complementary range \(n_{\mathcal S}>E\) contributes \(O(xL^C\exp(-cL^{1/10}))\) to the difference between the two sums by \eqref{eq:full-record-tail} and the Cauchy--Schwarz inequality. It therefore remains to establish the asserted asymptotic for \(D_k^0\), which will be our goal in the remainder of this section.

\paragraph{1. Removing large factors.}
Recall the parameters $ \theta=(\log y_0)^{-1}=L^{-2/5}$ and $E=\exp( \sqrt{L})$ used in Section~2.4. We begin by showing that we may discard the large factors from $D_k^{0}$. Write $d_0(1)=1$ and $d_0(n)=0$ for $n>1$.
The standard identity
\begin{equation}
d_k(n_{\mathcal S})
   =\sum_{\substack{d\mid n\\d_{\mathcal S}=d}}d_{k-1}(d)
 \label{eq:positive-proxy-divisors}
\end{equation}
follows by multiplying
$\sum_{e=0}^{\nu}d_{k-1}(p^e)=d_k(p^\nu)$ over primes.
On the other hand, by Rankin's inequality
\[
\sum_{\substack{d\mid n,\ d_{\mathcal S}=d\\d>E}}d_{k-1}(d)
 \leq E^{-\theta}n_{\mathcal S}^{\theta}d_k(n_{\mathcal S}).
\]
By the Cauchy-Schwarz inequality and two applications of Lemma~\ref{lem:record-moments}, we conclude that

$$
B_k\sum_n |\Phi(n/x)|\,d_l(n+h)
 \sum_{\substack{d\mid n\\ d_{\mathcal S}=d\\ d>E}} d_{k-1}(d) \ll
xL^C\exp(-L^{1/10}). $$

\paragraph{2. Evaluating a single progression.}
Fix a retained divisor $d\leq E$ and put $g=(d,h)$ and $q=d/g$.
On writing $n+h=gm$, the condition $d\mid n$ becomes
\begin{equation}
m\equiv h/g\pmod q,\qquad (h/g,q)=1.
 \label{eq:scalar-reduced-progression}
\end{equation}
We are thus interested in evaluating
$$
S_d
=
\sum_{\substack{m\ge1\\m\equiv h/g\;(\mathrm{mod}\ q)}}
\Phi\left(\frac{gm-h}{x}\right)d_l(gm).
$$
To estimate this we notice that the Euler factors give, for $\Re s>1$,
\[
\sum_{m\geq1}\frac{d_l(gm)\chi(m)}{m^s}
 =L(s,\chi)^l
      \prod_{p\mid g}P_{l,v_p(g)}(\chi(p)p^{-s}),
\]
where for $a\geq0$, we define
\[
P_{l,a}(z)=(1-z)^l\sum_{\nu\geq0}d_l(p^{a+\nu})z^\nu = (1-z)^l\sum_{\nu\geq0}\binom{l+a+\nu-1}{l-1}z^\nu.
\]
Notice that $P_{l,a}$ is a polynomial of degree at
most $l-1$, independent of $p$, with
$P_{l,0}=1$ and $P_{l,a}(0)=d_l(p^a)$.
Since $g\mid|h|$, the extra factor in the Dirichlet series and its fixed derivatives are
bounded in terms of $h,l$ on every fixed vertical strip.

We shall apply character orthogonality to
\eqref{eq:scalar-reduced-progression} and Mellin inversion with
\[
\widehat\Phi_{h/x}(s)
     =\int_0^\infty\Phi(u-h/x)u^{s-1}\,du.
\]
This gives
\[
\begin{aligned}
S_d={}&\frac{1}{2\pi i\varphi(q)}
 \sum_{\chi\bmod q}\overline{\chi(h/g)}
 \int_{2-i\infty}^{2+i\infty}
       L(s,\chi)^l\widehat\Phi_{h/x}(s)\\
&\qquad{}\times\left(\frac{x}{g}\right)^s
       \prod_{p\mid g}P_{l,v_p(g)}(\chi(p)p^{-s})\,ds.
\end{aligned}
\]
We shift the contour to $\Re s=1/2$. We use the standard uniform estimate
\[
|L(s,\chi)|\ll q^{1/2}(1+|\Im s|)
\qquad\left(\tfrac12\leq\Re s\leq2,\quad |s-1|\geq\tfrac12\right)
\]
(see \cite[Exercise~44 and (48)]{TaoNotes2}). Together with the rapid
decay of $\widehat\Phi_{h/x}$ uniformly in this strip, this makes the
horizontal integrals tend to zero. Only the principal character
$\chi_0$ modulo $q$ contributes a residue and we obtain
\begin{equation}
\label{Sdterm}
S_d
 =\Res_{s=1}\zeta(s)^lK_{d,h}(s)x^s\widehat\Phi_{h/x}(s)
       +O_{h,l,\Phi}(x^{1/2}q^{l/2}),
\end{equation}
where 
\[
K_{d,h}(s)
=
\frac{g^{-s}}{\varphi(q)}
\prod_{p\mid q}(1-p^{-s})^l
\prod_{p\mid g}
P_{l,v_p(g)}
\left(\chi_0(p)p^{-s}\right).
\]
Notice that for fixed $s$ this is a multiplicative function in $d$. Putting $r=v_p(h)$, its factors are
\begin{equation}
K_{p^e,h}(s)=
 \begin{cases}
 p^{-es}P_{l,e}(p^{-s}),&0\leq e\leq r,\\[2mm]
 \displaystyle
 \frac{p^{-rs}d_l(p^r)(1-p^{-s})^l}{\varphi(p^{e-r})},
                                  & e>r.
 \end{cases}
 \label{eq:scalar-progression-factors}
\end{equation}
Indeed, if $e\leq r$ then $p\nmid q$, while if $e>r$ then
$p\mid q$ and the principal character has value zero at $p$, so we recover $P_{l,v_p(g)}(0)=d_l(p^r)$.

\paragraph{3. Estimating the residue.}
Notice that since $d\leq E=x^{o(1)}$, summing the errors in (\ref{Sdterm}) over all such $d$ with the
coefficients $B_kd_{k-1}(d)$ costs only $x^{1/2+o(1)}$. Thus it only remains to estimate the weighted sum of the residues
\[
R_d:=\Res_{s=1}\zeta(s)^lK_{d,h}(s)x^s\widehat\Phi_{h/x}(s).
\]
We will look at the Taylor expansions of the non-zeta factors and show that the dominant term in the residue comes from the $(l-1)$th derivative of  $x^s=xe^{(s-1)L}$. The fixed derivatives of $\widehat\Phi_{h/x}$ near one are bounded uniformly in $x$. We now show that, for each fixed $j\geq0$,
\[
|K_{d,h}^{(j)}(1)|
          \ll_{h,l,j}\frac{(1+\log d)^j}{d}.
\]
For $p\nmid h$ and $e\geq1$, the local formula
\eqref{eq:scalar-progression-factors} simplifies to
\[
K_{p^e,h}(s)=\frac{p^{-e}}{1-p^{-1}}(1-p^{-s})^l.
\]
Thus $K_{p^e,h}(1)\leq p^{-e}$, and differentiating its logarithm
gives
$|({\log K_{p^e,h}})^{(j)}(1)|\ll_{l,j}(\log p)^j/p$ for $j\geq1$.
For $p\mid h$, put $r=v_p(h)$. There are only finitely many cases
$e\leq r$, while for $e>r$ the factor $K_{p^e,h}(s)$ is $p^{-e}$
times a function independent of $e$. After extracting $p^{-e}$, these primes therefore contribute
only bounded factors and bounded logarithmic
derivatives near one.
Adding the logarithmic derivatives over the primes $p\mid d$ not dividing $h$ and using
\[
\sum_{p\mid d}\frac{(\log p)^j}{p}\leq(\log d)^j
       \qquad(j\geq1),
\]
we obtain $|({\log K_{d,h}})^{(j)}(1)|\ll_{h,l,j}(1+\log d)^j$.
Together with $K_{d,h}(1)\ll_{h,l}d^{-1}$, repeated differentiation
of $K_{d,h}'=K_{d,h}(\log K_{d,h})'$ proves the claimed bound.

Now put $\kappa_{d,h}=K_{d,h}(1)$ and note that
$\widehat\Phi_{h/x}(1)=\int\Phi$ for sufficiently large $x$.
Using that $d \le E$, we see that
\[
R_d=\frac{xL^{l-1}}{(l-1)!}\kappa_{d,h}\int\Phi
       +O_{h,l,\Phi}\!\left(\frac{xL^{l-3/2}}d\right).
\]
The contribution of the error term to the $d$-sum is then controlled by 
\[
x B_k L^{l-3/2}
\sum_{\substack{d_{\mathcal S}=d}}
\frac{d_{k-1}(d)}{d}
\ll_{h,l,\Phi}
xB_k L^{l-3/2}V^{-(k-1)}
\ll_{k,l,h,\Phi}
xL^{k+l-5/2}.
\]
To obtain an Euler product out of the main term of $R_d$ we now need to bring back the large divisors $d > E$ that we discarded, which we can safely do since the bound $|\kappa_{d,h}|\ll_{h,l}d^{-1}$ and Rankin's inequality give
\[
\sum_{\substack{d>E\\d_{\mathcal S}=d}}
 d_{k-1}(d)|\kappa_{d,h}|
 \ll E^{-\theta}
       \prod_{p\in\mathcal S}(1-p^{-1+\theta})^{-(k-1)}
 \ll \exp(-L^{1/10})L^C.
\]
The final bound follows as in Lemma~\ref{lem:record-moments}; for
$k=1$ the tail is zero. We have therefore proved
\[
\sum_n\Phi(n/x)D_k^0(n)d_l(n+h)
 =\frac{xL^{k+l-2}}{(k-1)!(l-1)!}
       \prod_{p\in\mathcal S}c_{p,h}\int\Phi
       +O(xL^{k+l-5/2}),
\]
where
\[
c_{p,h}=(1-p^{-1})^{k-1}
       \sum_{e\geq0}d_{k-1}(p^e)\kappa_{p^e,h}.
\]

\paragraph{4. Identifying the singular series.}
Put $v=1-p^{-1}$ and $r=v_p(h)$, and let
\[
J_{p,e}=\int_{\mathbb Z_p}\mathbf1_{\{p^e\mid u\}}
                         d_l(p^{v_p(u+h)})\,du.
\]
If $e\leq r$, translation by $h$ preserves $p^e\mathbb Z_p$, so
\[
J_{p,e}=v\sum_{\nu\geq e}d_l(p^\nu)p^{-\nu}.
\]
If $e>r$, every $u\in p^e\mathbb Z_p$ has $v_p(u+h)=r$, and hence
$J_{p,e}=p^{-e}d_l(p^r)$.
Comparison with \eqref{eq:scalar-progression-factors} at $s=1$
gives $\kappa_{p^e,h}=v^{l-1}J_{p,e}$ in both cases.
Integrating the nonnegative expansion
\[
d_k(p^{v_p(u)})=\sum_{e\geq0}d_{k-1}(p^e)
                                      \mathbf1_{\{p^e\mid u\}}
\]
term by term therefore gives
\[
c_{p,h}=v^{k+l-2}\int_{\mathbb Z_p}d_k(p^{v_p(u)})
                                d_l(p^{v_p(u+h)})\,du.
\]
These integrals are finite and positive.
For $p\nmid h$, summing the factors
$\kappa_{p^e,h}=p^{-e}v^{l-1}$ for $e\geq1$ yields
\begin{equation}
c_{p,h}=v^{k-1}+v^{l-1}-v^{k+l-2}
       =1+O_{k,l}(p^{-2}).
 \label{eq:unramified-scalar-factor}
\end{equation}
Since $c_{p,h}=v^{k+l-2}I_{p,h}$, this also gives
\begin{equation}\label{eq:local-unramified}
 I_{p,h}=v^{1-k}+v^{1-l}-1
 \qquad(p\nmid h).
\end{equation}
Thus $\prod_pc_{p,h}=\mathfrak S_{k,l}(h)$ converges to a positive
number. Once $\mathcal S$ contains every prime divisor of $h$,
the omitted factors cost $O(\sum_{p>y_0}p^{-2})=O(y_0^{-1})$,
which is smaller than $O(L^{-1/2})$.
This proves \eqref{eq:scalar-proxy-asymptotic} with $D_k^0$ in
place of $D_k$.

\section{Reconstruction from banks with one prime factor}
\label{sec:reconstruction}

By Proposition~\ref{prop:scalar-proxy}, it remains to bound the
correlation of $\xi_k(n)$ with $d_l(n+h)$. In this section,
we will show that the problem can be reduced to correlations
in which both $n$ and $n+h$ have exactly one distinct prime
divisor in many of
the banks introduced in \eqref{eq:banks}. After discarding negligible contributions from higher prime
powers, each selected bank then contributes the fixed value
$kl$ to $\xi_k(n)d_l(n+h)$. This allows us to factor out
these local contributions and thus transfer the problem to the
$\alpha_{\mathcal P}(n)\beta_{\mathcal P}(n+h)$ part of the correlation
involving dilation-invariant factors.
This will then be handled by the graph machinery we shall develop later, which will also benefit substantially from being able to fix the number of distinct prime divisors coming from each bank.

For an active union $\mathcal P$ of banks, put
\begin{equation}\label{eq:singleton-weight}
 \Omega_{\mathcal P}(n)=
 \prod_{g\in\mathcal G_{\mathcal P}}
          \frac{\one_{\{\omega_g(n)=1\}}}{V_g}.
\end{equation}
Notice that $\Omega_{\mathcal P}(n)\le L^{O(1)}$. The goal of this section is to show that Theorem~\ref{thm:main} can be deduced from the following result, whose proof is deferred to Sections~\ref{sec:analytic-services}--\ref{sec:graph}.

\begin{proposition}[Cancellation with exactly one prime in each bank]
\label{prop:centered-marked-correlation}
Fix $k,l\ge2$, $h\ne0$, and $c>0$.  For every $A>0$ and $\Psi\in C_c^\infty((0,\infty))$, we have
\begin{equation}\label{eq:bridge-producer}
 \sum_{\substack{n\ge1\\n+h\ge1}}\frac{\Psi(n/x)}n
 \alpha_{\mathcal P}(n)\beta_{\mathcal P}(n+h)
 \Omega_{\mathcal P}(n)\Omega_{\mathcal P}(n+h)
       \ll_{A,k,l,h,c,\Psi}L^{-A}
\end{equation}
 uniformly over all active sets \(\mathcal P\) with $|\mathcal G_{\mathcal P}\cap\mathcal G_s| , |\mathcal G_{\mathcal P}\cap\mathcal G_b|\ge cJ$.
\end{proposition}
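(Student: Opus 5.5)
The plan follows the outline in Section~\ref{subsec:outline-of-proof}: first run a dilation-graph argument to trade the fixed shift $h$ for averaged shifts $hD_1D_2$, then show via a circle-method argument that each averaged correlation is $O_A(L^{-A})$. Write $\mathcal C_0$ for the left side of \eqref{eq:bridge-producer}. The first step is a comparison identity (the future Proposition~\ref{prop:graph-comparison}):
\[
\mathcal C_0+\sum_\nu c_\nu\mathcal C_\nu=O_A(L^{-A}),
\]
with $L^{O(1)}$ terms and $\sum_\nu|c_\nu|\le L^{O(1)}$. To obtain it, I would use the dilation invariance $\alpha_{\mathcal P}(pn)=\alpha_{\mathcal P}(n)$, $\beta_{\mathcal P}(pn)=\beta_{\mathcal P}(n)$ for $p\in\mathcal P$. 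Averaging over the unique prime of $n$ (and of $n+h$) in each active bank rewrites $\mathcal C_0$ as an average over edges $n\sim n+hD$ of a weighted graph whose vertices satisfy the singleton condition $\Omega_{\mathcal P}$. The $1/n$ weight together with the smooth $\Psi$ absorbs the change of scale under dilation, up to $L^{-A}$ errors.

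Next I would apply the high-trace machinery of \cite{OpenAIGraph} (after \cite{HR,Pilatte}) to this graph, using a trace power growing with $A$. Two checks are needed for our graph:
\begin{itemize}
\item Moments must be controlled. Lemma~\ref{lem:marked-moments} bounds the $r$-th moments of $\alpha_{\mathcal P},\beta_{\mathcal P}$ weighted by marked prime lists by $NL^{C_r}$, uniformly in the lists, which is what the Cauchy--Schwarz/trace step requires.
\item The freeing of primes must work in our setting. In \cite{OpenAIGraph}, divisibility constraints on $n+hD$ are compared with a random model that draws primes freely from the banks. Here each bank contains exactly one prime, so I would free one prime in a selected subcollection of banks. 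Each freed bank then gives a fixed saving, and since there are $\ge cJ$ small and $\ge cJ$ big banks, choosing a large multiple of them yields savings $L^{-A'}$ for any $A'$.
\end{itemize}
The output is a bound on $\mathcal C_0$ by comparison correlations
\[
\mathcal C_\nu=\mathbb E_{D_1,D_2}b_1(D_1)b_2(D_2)\sum_n\tfrac{\Psi(n/x)}{n}\alpha_{\mathcal P}(n)\beta_{\mathcal P}(n+hD_1D_2)\cdots,
\]
where $D_1$ is a product of primes from the small banks, $D_2$ from the big banks, and $|b_i|\le1$.

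Second, I would show that each $\mathcal C_\nu\ll_A L^{-A-C}$, which beats the polynomial mass $\sum_\nu|c_\nu|$. By Fourier inversion, $\mathcal C_\nu$ becomes an integral over $\theta$ of $\widehat{\alpha}\,\overline{\widehat\beta}$ against the multiplier $\mathbb E\,b_1b_2e(\theta hD_1D_2)$.
\begin{itemize}
\item \emph{Minor arcs.} A bilinear (Cauchy--Schwarz in $D_1$, then large sieve over the products $D_2$) bound should give the multiplier a saving $L^{-A}$ uniformly in $b_1,b_2$ away from rationals $a/q$ with $q\le L^{B}$. This is Lemma~\ref{lem:auxiliary-product}. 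Combined with Parseval and the moment bounds, this disposes of the minor arcs.
\item \emph{Major arcs.} I would apply Cauchy--Schwarz to discard $\beta_{\mathcal P}$ via divisor moments, leaving the local energy of $\alpha_{\mathcal P}$ twisted by characters of modulus $q\le L^B$. This reduces to mean values of $\sum_n\alpha_{\mathcal P}(n)\chi(n)n^{it}$ over short $t$-ranges.
\end{itemize}

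Third, I would bound these Dirichlet polynomials.
\begin{itemize}
\item \emph{Small $|t|$, principal character.} By \eqref{eq:record-centered-core}, fixing $A=n_{\mathcal S}$ leaves $d_k(t)-M_{k,\mathcal S}(t)/V$ over $(t,\mathcal S)=1$. The definition \eqref{eq:density-def} cancels the residue at $s=1$ exactly, so contour shifting saves $\exp(-cL^{1/3})$-type factors. For non-principal characters there is no pole at all.
\item \emph{Large $|t|$.} I would factor $n=pm$ with $p$ the unique prime in a small bank, which is possible because of the singleton condition. The prime sum is then bounded by the zero-free region of \cite{OpenAIZeroFree} for primes of size $\exp(L^{1/8})$ to $\exp(L^{1/4})$. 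The $m$-sum is handled by a mean value theorem, which needs the polynomial length $x/p$ to be much larger than the $t$-range; this is why the auxiliary $D_1D_2$ are large.
\end{itemize}

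I expect the main obstacle to be the first step: verifying that the transference and partial-freeing argument of \cite{OpenAIGraph} survives our singleton-per-bank graphs with unbounded weights, while keeping the comparison losses polynomial in $L$.
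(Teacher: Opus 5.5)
Your second and third steps follow the paper's Section~\ref{sec:arithmetic-bridge} closely: Fourier inversion, the bilinear bound of Lemma~\ref{lem:auxiliary-product} off the major arcs, Lemma~\ref{lem:local-mellin-energy}, the exact residue cancellation of Lemma~\ref{lem:combined-mellin} for $|t|\le L^B$, and extraction of the unique small-bank prime via Lemma~\ref{lem:zf-smooth-prime} plus the mean value theorem for $L^B<|t|\le T_0$.

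\textbf{The gap is in your first step, which is where the arbitrary saving must come from.}

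\begin{itemize}
\item On vertices with one prime per bank you can free at most one prime per bank. Each such bank gives at most a fixed factor $\rho<1$.
\item There are only $O(J)$ banks (at most $C_*J$), so the total saving is $\rho^{O(J)}=L^{-c'}$ for a fixed $c'$. You cannot take ``a large multiple'' of $J$ banks.
\item Raising the trace power does not help: it cannot push the bound below the mixing rate of the underlying random model.
\item This is exactly the Pilatte regime \cite{Pilatte}, which the paper explicitly says is insufficient. A fixed $L^{-c'}$ does not give \eqref{eq:bridge-producer} for every $A$. Nor does it absorb the $L^{O_{k,l}(1)}$ losses from the inclusion--exclusion in Section~\ref{sec:completion}.
\item Your raw graph also does not reproduce $\mathcal C_0$. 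If $D$ is built from the bank primes of $n$ itself, then $n=Dw$ leaves $w$ with no primes in those banks. You get a correlation weighted by $\omega_g(w)=\omega_g(w+h)=0$, not by $\Omega_{\mathcal P}$.
\end{itemize}

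\textbf{The missing idea is to lift $\mathcal C_0$ to a graph whose vertices have exactly $M$ distinct prime divisors in every bank, with $M$ large.} The slots $j<M$ are shared, so their product $D_R$ divides both endpoints. The last slot is always unshared.

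\begin{itemize}
\item Dilation invariance of $\alpha_{\mathcal P},\beta_{\mathcal P}$ and the exact identity $\Psi(n/(xD))/n=D_R^{-1}\Psi(w/(xD_C))/w$ for $n=D_Rw$ let you strip $D_R$ off again.
\item The collision analysis shows that the shared-prime weights sum to $\prod_g\one_{\{\omega_g(w)=\omega_g(w')=t_g\}}$, up to $L^{O(1)}e^{-L^{1/8}}$. So the raw member (all $t_g=1$) gives $\mathcal C_0$ up to $O_A(L^{-A})$, and the comparisons give the $\mathcal C_\nu$.
\item The many shared slots are what let \cite{OpenAIGraph} free up to $m$ probe primes in each big bank, with $m$ chosen according to the target exponent. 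The signed kernels \eqref{eq:g14}, built from logarithmic cells and small-conductor characters, then make the independent-model operator have norm at most $L^{-E_{\rm id}}$ for any prescribed $E_{\rm id}$.
\item This bound is transferred to the exact-count graph in Theorem~\ref{thm:finite-graph}. That step clips $f$ at $\exp(\sqrt L)$ using the $\ell^4$ bound from Lemma~\ref{lem:marked-moments}.
\item You also need Lemma~\ref{lem:separated-comparison-coefficients} to split $K_\nu$ as $a(\boldsymbol d_1)b(\boldsymbol d_2)u(\boldsymbol p)v(\boldsymbol q)$. This is what produces the bilinear structure your minor-arc step presupposes.
\end{itemize}

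Finally, the paper draws both $D_1$ and $D_2$ from big banks (two halves of $\mathcal G_b$) and never frees small-bank slots. This guarantees every comparison term keeps exactly one prime in each small bank, with $u_\nu,v_\nu$ independent of it, as \eqref{eq:bridge-owner-identity} requires. Taking $D_1$ from small banks, as you propose, would at least force you to reserve untouched small banks for the extraction step.
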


\subsection{A uniform upper bound for two affine forms}
We will need to upper bound the contribution to the correlation produced by integers with an unusual number of prime divisors in the banks. For this, an important role will be played by the following fixed-resultant consequence of the correlation
bound \cite[Lemma~8.3]{MRSTT},
itself deduced from Henriot's theorem \cite{Henriot, HenriotErr}.

\begin{corollary}\label{cor:henriot-record-weights}
Fix $j_1,j_2\ge1$ and a nonzero integer $h$, and write $L=\log x$.
Let $R=x^{1+o(1)}$, and let $Q_i(m)=a_im+b_i$ be primitive affine
forms with $a_i>0$, coefficients $\exp(O(L^{1/2}))$, and nonzero
resultant $\Delta=a_1b_2-a_2b_1$ dividing $h$.
For arbitrary sets of primes $\mathcal U_i,\mathcal V_i$, put
\[
 f_i(n)=d_{j_i}(n_{\mathcal V_i})\one_{(n,\mathcal U_i)=1},\qquad
 I_p=\int_{\mathbb Z_p}\prod_{i=1}^2
              f_i\!\left(p^{v_p(Q_i(u))}\right)\,du.
\]
Then, uniformly in the above choices of $Q_i$, $\mathcal U_i$ and $\mathcal V_i$, we have
\begin{equation}\label{eq:henriot-record-upper}
 \sum_{R<m\le 2R}f_1(Q_1(m))f_2(Q_2(m))
       \ll_{j_1,j_2,h}R\prod_{p\le R}I_p.
\end{equation}
\end{corollary}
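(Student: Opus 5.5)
The plan is to deduce \eqref{eq:henriot-record-upper} from the Henriot-type bound \cite[Lemma~8.3]{MRSTT}. That lemma bounds, for multiplicative $f_1,f_2$ with $f_i(p^\nu)\le A_1^\nu$ and $f_i(n)\ll_\varepsilon n^\varepsilon$, the sum $\sum_{R<m\le 2R}f_1(Q_1(m))f_2(Q_2(m))$ by
\[
R\prod_{p\le R}\Bigl(1-\frac{\rho(p)}p\Bigr)\exp\Bigl(\sum_{p\le R}\frac{\#\{u\bmod p\}\text{-weighted }(f_1+f_2)(p)-\ldots}{p}\Bigr),
\]
times a factor depending on $\Delta$ (of the form $\prod_{p\mid\Delta}(1+O(1/p))^{O(1)}$ or $\Delta_D$-type). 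It is uniform in forms with coefficients up to a power of $R$, and the implied constants depend only on $A_1$ and the degree.

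First, both $f_i$ are multiplicative and nonnegative with $f_i(p^\nu)\le d_{j_i}(p^\nu)\le (\nu+1)^{j_i}$. This gives the required uniform hypotheses with constants depending only on $j_1,j_2$, independently of $\mathcal U_i,\mathcal V_i$. The coefficients are $\exp(O(L^{1/2}))=R^{o(1)}$, so the admissibility range of the lemma applies.

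Second, the Henriot bound has the shape $R\prod_{p\le R}(1-1/p)^2\cdot\sum_{n_1,n_2}\frac{f_1(n_1)f_2(n_2)\rho(n_1,n_2)}{n_1n_2}$ with $n_i$ restricted to be $R$-smooth and coprime to a finite set of primes. Here $\rho(n_1,n_2)$ counts the solutions modulo $n_1n_2$ of $n_i\mid Q_i(m)$. I will factor this sum into an Euler product and match it with the $p$-adic integrals $I_p$. Because the resultant divides $h$, for $p\nmid h$ the divisibilities $p\mid Q_1(u)$ and $p\mid Q_2(u)$ are disjoint, and $p\nmid a_1a_2$ generically. The local factor is then $1+\frac{f_1(p)+f_2(p)}p+O(p^{-2})$, which agrees with $(1-1/p)^{-2}I_p(1+O(p^{-2}))$. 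Indeed, $I_p=1-\frac{2}{p}+\frac{f_1(p)+f_2(p)}{p}+O(p^{-2})$ when $p\nmid a_1a_2$. If $p\mid a_i$, then primitivity gives $p\nmid b_i$, so $Q_i(u)$ is a $p$-unit and both sides have factor $1+O(p^{-1})$ in a matching way. I also need the $(1-1/p)^2$ normalisation to cancel correctly, and I will check this through the local densities $\rho$.

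Third, for the finitely many $p\mid h$ (and, more generally, $p\mid\Delta$), the local factors of both expressions are bounded above and below in terms of $j_i,h$. The bound from below is the delicate point, since $I_p$ could be small if $f_i$ vanish. But $f_i(1)=1$ and the set where $Q_1(u)$ and $Q_2(u)$ are both $p$-units has positive measure unless $p$ divides both $Q_i$ somewhere identically; primitivity prevents this. More precisely, I need the measure of that set to be at least $p^{-v_p(h)}$-ish. In fact, with $p\le|h|$ and $p\mid\Delta$ one can only ensure the measure is $\gg_h 1$ for $p\ge3$. For $p=2$ it may be zero, for instance when $Q_1=u$ and $Q_2=u+1$. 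In that case I will instead use that $I_p$ is bounded below using $f_i(p^\nu)\ge$ the indicator of $\nu=0$ on the relevant branch. If both $f_i$ vanish at all positive powers of $2$ while the forms force one of them to be even, the sum itself vanishes, so the inequality is trivial.

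The main obstacle is this bookkeeping at the primes dividing $a_1a_2\Delta$. The required uniformity in $a_i$ of size $\exp(O(L^{1/2}))$ means the number of such primes is unbounded. I will therefore verify that the Henriot local factor and $(1-1/p)^{-2}I_p$ agree up to $1+O(p^{-2})$ for $p\nmid\Delta$, where $\Delta$ is bounded by $h$. The remaining primes with $p\mid a_1a_2$ give identical factors on the two sides, which removes any dependence on the size of $a_i$.
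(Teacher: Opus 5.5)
Your proposal follows essentially the paper's route. You apply the Henriot bound of \cite[Lemma~8.3]{MRSTT}, use $D=\Delta^2\mid h^2$ to bound the discriminant factor, and match local factors with $I_p$ up to $1+O(p^{-2})$ for $p\nmid 2h$. At $p\mid h$ and at $p=2$ you use $I_p\ge 1-2/p$, or the parity dichotomy: either no parity class is permitted and the sum vanishes, or $I_2\ge 1/2$.

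The one real divergence is at $p\mid a_1a_2$. Your exact matching there relies on the root counts $\rho$ in Henriot's original form, with prefactor $1-\rho(p)/p$ rather than the $(1-1/p)^2$ you wrote. The paper instead cites a version with unweighted harmonic sums, so it first replaces $f_i$ by $f_i\one_{(n,a_i)=1}$. This is harmless since $(Q_i(m),a_i)=1$, and it forces $G_i=1$ at those primes. Without one of these two devices the factor $\prod_{p\mid a_i}G_i$ would not be bounded uniformly in $a_i$.
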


\begin{proof}
We may clearly replace $f_i$ with $f_i (n) \one_{(n,a_i)=1}$ without affecting either side of (\ref{eq:henriot-record-upper}). We also translate the variable $m$ by $\exp(O(L^{1/2}))=o(R)$ so that the constant
terms of both $Q_i$ become positive. The translated interval is contained in
$[\sqrt X,X]$ for some $X\asymp R$, and evidently all new coefficients are at most
$2X$ for sufficiently large $x$. We may therefore apply
\cite[Lemma~8.3]{MRSTT}, using that both $f_i$
are submultiplicative and satisfy the growth hypotheses of its statement,
uniformly in both prime sets. Indeed, $(ab)_{\mathcal V_i}=a_{\mathcal V_i}b_{\mathcal V_i}$,
$d_j(ab)\le d_j(a)d_j(b)$, and
\[
 f_i(p^\nu)\le d_{j_i}(p^\nu)\le j_i^\nu,\qquad
 f_i(n)\le d_{j_i}(n)\ll_{j_i,\varepsilon}n^\varepsilon.
\]
The discriminant parameter $D$ in the statement
is $D=\Delta^2\mid h^2$, so the corresponding $\Delta_D$ factor there is
$O_{j_1,j_2,h}(1)$ by \cite[(8.12)]{MRSTT}.

The bound in \cite[Lemma~8.3]{MRSTT} is in terms of two harmonic sums over the nonnegative $f_i$. We bound them, up to a factor depending only on $j_1,j_2,h$, by their Euler products over all primes $p \le \sqrt{X}$ that do not divide $h$. For $p\nmid2h$, write $v=1-p^{-1}$ and
$G_i=\sum_{\nu\ge0}f_i(p^\nu)p^{-\nu}\in\{1,v^{-1},v^{-j_i}\}$.
If $p$ divides $a_i$, the condition $(n,a_i)=1$ in $f_i$ gives $G_i=1$
and the resulting local upper factor in the statement equals $vG_{3-i}=I_p$.
Otherwise $a_1, a_2$ are units and the roots of $Q_1, Q_2$ are distinct mod $p$, so the
upper factor in the inequality of \cite[Lemma~8.3]{MRSTT} and the Haar mean are respectively
\[
 (1-2/p)G_1G_2,\qquad I_p=v(G_1+G_2)-1.
\]
Since $G_i=1+O_{j_i}(p^{-1})$ their ratio is
$1+O_{j_1,j_2}(p^{-2})$, leading to a bounded product.
For the $O_h(1)$ primes dividing $h$ it suffices to obtain a uniform lower bound for $I_p$. For those distinct from $2$, this follows from
$I_p\ge1-2/p>0$. At $2$, either no parity class is permitted and the
sum vanishes, or $I_2\ge1/2$; its other local factors are bounded.
Finally the cutoff $\sqrt X\asymp\sqrt R$ can be extended to $R$
at bounded cost by Mertens' theorem and the general lower bound $I_p\ge1-2/p$.
\end{proof}

\subsection{An enveloping measure on the records}

Recall that
$\mathcal C=\bigcup_{g\in\mathcal G_s\cup\mathcal G_b}P_g$
is the union of all the small and big banks.
For sufficiently large $x$, no prime in $\mathcal C$
divides $h$, so the records $n_{\mathcal C}$ and
$(n+h)_{\mathcal C}$ are coprime.

Fixing the values
\[
 n_{\mathcal C}=A,\qquad (n+h)_{\mathcal C}=B,
\]
the Chinese remainder theorem allows us to express $n/A$ and
$(n+h)/B$ as two primitive affine forms in a common
integer variable. This brings the upper bound of
Corollary~\ref{cor:henriot-record-weights} into play
for the correlation restricted to that record pair.
Our goal in this subsection is to show that the
resulting bounds over all pairs $(A,B)$ can be organized into a
single probability law that controls arbitrary
weighted collections of such records.

To make this precise, fix $p\in\mathcal C$ and
let $u$ be distributed according to normalized
Haar measure on $\mathbb Z_p$.
Since $p\nmid h$, the valuations $v_p(u)$ and
$v_p(u+h)$ cannot both be positive. Writing
$v=1-p^{-1}$, the pair $(0,0)$ has probability
$1-2/p$, while $(j,0)$ and $(0,j)$ each have
probability $v/p^j$ for $j\ge1$.

The local divisor weight
$d_k(p^{v_p(u)})d_l(p^{v_p(u+h)})$
has mean $I_{p,h}$, whose value was explicitly computed in
\eqref{eq:local-unramified}.
We now weight these valuation probabilities
by the corresponding divisor factors and divide
by $I_{p,h}$. This defines a probability measure
$\mu_p$ on pairs of valuations, given by:
\begin{align}
 \mu_p(0,0)&=(1-2/p)/I_{p,h},
                  \label{eq:record-law}\\
 \mu_p(j,0)&=v\,d_k(p^j)/(p^jI_{p,h})
                  \quad(j\ge1),\notag\\
 \mu_p(0,j)&=v\,d_l(p^j)/(p^jI_{p,h})
                  \quad(j\ge1).\notag
\end{align}
Pairs with both valuations positive receive mass
zero. Let $\mu_{\mathcal C}$ be the independent
product of these local measures over
$p\in\mathcal C$. It is a probability law on
pairs of $\mathcal{C}$-records $(A,B)$. We will prove the following result.

\begin{lemma}\label{lem:positive-record-law}
Let $\mathcal C$ be as in \eqref{eq:fullrecord}. For every function $0\le F(A,B)\le1$ the sums
\[
 \sum_{n\asymp x}d_k(n)d_l(n+h)F(n_{\mathcal C},(n+h)_{\mathcal C}),
 \qquad
 \sum_{n\asymp x}D_k(n)d_l(n+h)F(n_{\mathcal C},(n+h)_{\mathcal C})
\]
are
\begin{equation}\label{eq:record-event-upper}
 \ll_{k,l,h} xL^{k+l-2}\E_{\mu_{\mathcal C}}F
       +xL^C e^{-cL^{1/10}},
\end{equation}
with the implied constant independent of $F$ and of the number of banks.
\end{lemma}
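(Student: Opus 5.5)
First I reduce the second sum to the first. For $n\le x^2$ we have $0\le D_k(n)\ll L^{k-1}d_k(n_{\mathcal S})$, and on the set $n_{\mathcal S}\le E$ Lemma~\ref{lem:density-size} sharpens this to $D_k(n)\ll B_kd_k(n_{\mathcal S})\ll L^{k-1}d_k(n_{\mathcal S})$. The range $n_{\mathcal S}>E$ contributes $O(xL^Ce^{-cL^{1/10}})$ to both sums, by Cauchy--Schwarz and \eqref{eq:full-record-tail}. So in both cases it suffices to bound
\[
\sum_{n\asymp x}W_1(n)\,d_l(n+h)\,F(n_{\mathcal C},(n+h)_{\mathcal C}),
\]
where either $W_1=d_k$, or $W_1=L^{k-1}d_k(\cdot_{\mathcal S})$ and we take $j_1=1$ on $\mathcal S'$. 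In the second case the factor $L^{k-1}$ will be balanced by Euler factors equal to $1$ on primes in $(y_0,x]$ instead of $v^{-(k-1)}$; I check the bookkeeping at the end.

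Next I fix the records. Split the sum according to the pair $(A,B)=(n_{\mathcal C},(n+h)_{\mathcal C})$, where $A,B$ are coprime and $\mathcal C$-smooth. Pairs with $AB>E$ are handled by Rankin exactly as before: $AB>E$ forces $n_{\mathcal S}(n+h)_{\mathcal S}>E$, so they fall into the tail term. For $A,B\le E$, the conditions $A\mid n$, $B\mid n+h$ define a residue class modulo $AB$, say $n=n_0+ABm$. Then $n/A=Q_1(m)$ and $(n+h)/B=Q_2(m)$ are primitive affine forms with resultant $\pm h$ and coefficients $\exp(O(L^{1/2}))$. The remaining condition that $n$ has exact $\mathcal C$-record $A$ becomes $(Q_1(m),\mathcal C)=1$, and likewise $(Q_2(m),\mathcal C)=1$. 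Since $d_k(n)=d_k(A)\,d_k(Q_1(m))$ for coprime parts, the restricted sum equals $d_k(A)d_l(B)F(A,B)$ times
\[
\sum_m f_1(Q_1(m))\,f_2(Q_2(m)),\qquad f_1=d_{k}\one_{(\cdot,\mathcal C)=1},\quad f_2=d_l\one_{(\cdot,\mathcal C)=1},
\]
with $m$ of size $R\asymp x/(AB)=x^{1+o(1)}$; in the model case $f_1$ uses $\mathcal V_1=\mathcal S$. Corollary~\ref{cor:henriot-record-weights} (splitting into dyadic ranges of $m$ if needed) bounds this by $\ll (x/AB)\prod_{p\le R}I_p$.

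It remains to compute the Euler product. For $p\notin\mathcal C$ and $p\nmid h$, $I_p=I_{p,h}=v^{1-k}+v^{1-l}-1$ (with $k$ replaced by $1$ for $p>y_0$ in the model case). For $p\in\mathcal C$, the coprimality gives $I_p=1-2/p$. Hence, with all constants depending only on $k,l,h$,
\[
\prod_{p\le R}I_p\ \ll\ \prod_{p\le x}(1-p^{-1})^{-(k+l-2)}\prod_{p\in\mathcal C}\frac{1-2/p}{I_{p,h}}\ \ll\ L^{k+l-2}\prod_{p\in\mathcal C}\frac{1-2/p}{I_{p,h}}.
\]
Here I use $I_{p,h}=1+(k+l-2)/p+O(p^{-2})$ and Mertens. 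In the model case the product over $y_0<p\le x$ gives only $(L^{2/5}/L)^{k-1}$, and together with $B_k\asymp (VL)^{k-1}$ this again reproduces $L^{k+l-2}$. Multiplying by $d_k(A)d_l(B)/(AB)$ and summing over $(A,B)$ gives, prime by prime,
\[
\frac{(1-2/p)\,d_k(p^{a})\,d_l(p^{b})\,p^{-a-b}}{I_{p,h}}
\]
at each valuation pair $(a,b)$. This is exactly $\mu_p(a,b)$ up to the factor $v$ for nonzero pairs, and $(1-2/p)\le v$ only helps. So the total is $\ll xL^{k+l-2}\E_{\mu_{\mathcal C}}F$, uniformly in the number of banks, since the comparison is exact prime by prime.

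The main obstacle is uniformity. I must make sure that every constant in the Henriot bound, and the comparisons between $\prod I_p$ and $L^{k+l-2}$, is independent of $|\mathcal C|$ and of $(A,B)$. This is supplied by the uniformity in Corollary~\ref{cor:henriot-record-weights} together with the exact local matching above. A second point to check is that replacing $E$-size moduli does not affect $R=x^{1+o(1)}$.
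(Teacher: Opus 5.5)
Your proposal follows the paper's proof essentially step by step. You truncate to small $\mathcal S$-records, fix $(A,B)=(n_{\mathcal C},(n+h)_{\mathcal C})$, and use the Chinese remainder theorem to get primitive forms $Q_1,Q_2$ with resultant $\pm h$. You then apply Corollary~\ref{cor:henriot-record-weights} with $\mathcal U_1=\mathcal U_2=\mathcal C$ (and $\mathcal V_1=\mathcal S$ in the model case), and match the local factors prime by prime with $\mu_{\mathcal C}(A,B)$.

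There are two small fixes. First, at $p\in\mathcal C$ with $p\mid AB$, one form has slope divisible by $p$ and is a constant unit modulo $p$. So $I_p=1-1/p=v$ rather than $1-2/p$, which gives exact equality with $\mu_p(a,b)$ instead of an inequality that ``only helps''. Second, in the model case the prefactor must be $V^{k-1}L^{k-1}$ from \eqref{Mkbound}, as in your final bookkeeping with $B_k$. Your earlier $W_1=L^{k-1}d_k(\cdot_{\mathcal S})$ would overshoot by $L^{2(k-1)/5}$.
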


\begin{proof}
It suffices to work on a dyadic interval $x<n\le2x$. Also, by Lemma~\ref{lem:record-moments} and the Cauchy-Schwarz inequality, it is enough to bound the sums restricted to those $n$ with $n_{\mathcal{S}}, (n+h)_{\mathcal{S}}  \le E=\exp(\sqrt L)$. We handle the $d_k$ and $D_k$ cases separately.

\paragraph{1. The actual divisor weight.}
Notice that once $x$ is sufficiently large with respect to $h$ we need only consider the values of $F(A,B)$ at coprime integers $A, B \le E$ that are both supported on $\mathcal{C}$ . Let us consider one such pair and estimate the contribution from those integers $n$ with $n_{\mathcal C}=A$ and $(n+h)_{\mathcal C}=B$. By positivity it suffices to bound
\[
 S(A,B):=\sum_{\substack{x<n\le2x\\
              n_{\mathcal C}=A,\ (n+h)_{\mathcal C}=B}}
                         d_k(n)d_l(n+h).
\]
By the Chinese remainder theorem, the divisibilities $A\mid n$ and
$B\mid n+h$ specify one residue class modulo $AB$. Choose its
representative $-AB<r\le0$ and write $n=ABm+r$. The remaining factors are then given by the affine forms
\[
 Q_1(m)=\frac nA=Bm+r/A,\qquad
 Q_2(m)=\frac{n+h}{B}=Am+(r+h)/B.
\]
The interval $x<n\le2x$ becomes $R<m\le R+Y$, where
\[
 R=\frac{x-r}{AB}=x^{1-o(1)},\qquad
 Y=\frac{x}{AB}\asymp R,\qquad Y\le R.
\]
Precisely, the sum becomes
\[
 S(A,B)=d_k(A)d_l(B)
 \sum_{\substack{R<m\le R+Y\\
                 (Q_1(m)Q_2(m),\mathcal C)=1}}
                          d_k(Q_1(m))d_l(Q_2(m)).
\]
This is the type of sum controlled by
Corollary~\ref{cor:henriot-record-weights}. The forms are primitive:
a common prime divisor of the two coefficients of either form would
divide both $AB$ and $h$, which is impossible. Their resultant is $h$
and their coefficients are $\exp(O(L^{1/2}))$ because $A,B\le E$
and $|r|<AB$. Since $(R,R+Y]\subseteq(R,2R]$ and $R\asymp x/(AB)$,
this corollary with $j_1=k$, $j_2=l$ and
$\mathcal U_1=\mathcal U_2=\mathcal C$ gives
\[
 S(A,B)\ll_{k,l,h}\frac{x\,d_k(A)d_l(B)}{AB}
                              \prod_{p\le R}I_p,
\]
where, in this application,
\[
I_p
=
\int_{\mathbb Z_p}
d_k\!\left(p^{v_p(Q_1(u))}\right)
d_l\!\left(p^{v_p(Q_2(u))}\right)
\mathbf 1_{\{(p\in\mathcal C)\Rightarrow v_p(Q_1(u))=v_p(Q_2(u))=0\}}
\,du.
\]
We now identify these local factors more precisely. If $p \in \mathcal C$ does not divide $AB$ then $p$ divides neither the slopes of the $Q_i$ nor their resultant $h$. Thus the roots of the $Q_i$ are unique and distinct mod $p$, so $I_p=1-2/p$. If $p \in \mathcal{C}$ does divide $AB$, one form
is a constant unit modulo $p$ and the other has one forbidden root,
so $I_p=1-1/p$. Outside $\mathcal C$, the substitution $n=ABm+r$
preserves Haar measure and division by $A,B$ does not change
valuations; hence $I_p=I_{p,h}$. In particular, we have
\[
\prod_{\substack{p\le R\\ p\notin\mathcal C}} I_{p,h}
=
\left\{\prod_{p\le R}(1-p^{-1})^{-1}\right\}^{k+l-2}
\left(\prod_{p\le R}c_{p,h}\right)
\left(\prod_{p\in\mathcal C}I_{p,h}\right)^{-1}
\ll_{k,l,h}
L^{k+l-2}
\left(\prod_{p\in\mathcal C}I_{p,h}\right)^{-1},
\]
where $c_{p,h}$ are the local factors introduced in the proof of
Proposition~\ref{prop:scalar-proxy}, whose product was shown to converge
to a positive limit in part~4 of that proof. We thus conclude that
\[
\begin{aligned}
S(A,B)
&\ll_{k,l,h}
xL^{k+l-2}\,
\frac{d_k(A)d_l(B)}{AB}
\prod_{p\in\mathcal C}\frac{I_p}{I_{p,h}}\\
&=
xL^{k+l-2}\mu_{\mathcal C}(A,B),
\end{aligned}
\]
where the last equality is immediate from the definitions. Multiplying by $F(A,B)$ and summing over all such pairs $A,B \le E$ proves
the result for $d_k$.

\paragraph{2. The density model case.}
The argument is essentially the same as the previous one with only slight modifications. Let $T(A,B)$ denote
the sum $S(A,B)$ with $d_k(n)$ replaced by $D_k(n)$.
By (\ref{Mkbound}) and \eqref{eq:model-def}, uniformly for $x<n\le2x$,
\[
 D_k(n)\ll_k V^{k-1}L^{k-1}d_k(n_{\mathcal S}).
\]
For $n=AQ_1(m)$ in this sum,
$n_{\mathcal S}=A(Q_1(m))_{\mathcal S\setminus\mathcal C}$.
Thus the same CRT argument leaves the residual weights
\[
 g_1(t)=d_k(t_{\mathcal S\setminus\mathcal C})
                  \one_{(t,\mathcal C)=1},\qquad
 g_2(t)=d_l(t)\one_{(t,\mathcal C)=1}.
\]
Corollary~\ref{cor:henriot-record-weights} applies to these weights
and the same primitive forms $Q_1,Q_2$. For primes $p \in \mathcal C$ their
local means are the factors $I_p$ calculated in the first part.
At $p\in\mathcal S\setminus\mathcal C$, both divisor weights are
present and the local mean is $I_{p,h}$, since $A,B$ are $p$-adic
units. Outside $\mathcal S$, the first weight is identically one
and the second has mean $(1-p^{-1})^{1-l}$. We therefore obtain
\[
 \begin{aligned}
 T(A,B)&\ll_{k,l,h}
 \frac{xV^{k-1}L^{k-1}d_k(A)d_l(B)}{AB}
 \left(\prod_{p\in\mathcal C}I_p\right)
 \left(\prod_{p\in\mathcal S\setminus\mathcal C}I_{p,h}\right)\\
 &\quad{}\times\prod_{y_0<p\le R}(1-p^{-1})^{1-l}.
 \end{aligned}
\]
By Mertens' estimate the last product is $\ll_l L^{l-1}V^{l-1}$.
Combining this with the model prefactor and using
\[
 V^{k+l-2}\prod_{p\in\mathcal S\setminus\mathcal C}I_{p,h}
 =\left(\prod_{p\in\mathcal S}c_{p,h}\right)
       \left(\prod_{p\in\mathcal C}I_{p,h}\right)^{-1}
 \ll_{k,l,h}\left(\prod_{p\in\mathcal C}I_{p,h}\right)^{-1},
\]
we conclude, just as in the first part, that
\[
 T(A,B)\ll_{k,l,h}xL^{k+l-2}
 \frac{d_k(A)d_l(B)}{AB}
 \prod_{p\in\mathcal C}\frac{I_p}{I_{p,h}}
 =xL^{k+l-2}\mu_{\mathcal C}(A,B).
\]
Multiplying by $F(A,B)$ and summing over the same candidate pairs
proves the model assertion.

\end{proof}

\subsection{Banks with a single factor}
\label{subsec:good-records}

For a typical integer $n$, the expected number of distinct
prime divisors from a bank $P_g$ is approximately
$V_g\asymp1$. In particular, we expect with positive
probability that both $n$ and $n+h$ have exactly one prime
divisor in $P_g$. Since we have $\asymp\log\log x$
banks of each size, we furthermore expect most integers to have this
property in a positive proportion of these banks. However, it is not enough to show that the exceptions
are rare: we must also control their contribution to the
divisor-weighted correlation. In this subsection we show
how this can be guaranteed thanks to the enveloping measure bound obtained
in Lemma~\ref{lem:positive-record-law}. Put
\[
 I_g(n)=\one_{\{\omega_g(n)=\omega_g(n+h)=1\}}.
\]
Fix $k,l\ge2$ and $h\ne0$, and let
\[
 \beta:=\frac{kl}{8}e^{-2(k+l)},\qquad \eta_0:=\frac{\beta c_*}{8},
\]
with $c_*$ as in \eqref{eq:banks}. Set
\[
 T_s=\lceil\beta|\mathcal G_s|/2\rceil,
 \qquad T_b=\lceil\beta|\mathcal G_b|/2\rceil.
\]
Finally, write $G(n)$ for the indicator function of those $n$ for which at least $T_s$ small banks and
$T_b$ big banks satisfy $I_g(n)=1$. The following consequence of Lemma~\ref{lem:positive-record-law} allows us to restrict attention to such integers $n$.

\begin{corollary}\label{cor:good-records}
With $G$ and $\eta_0$ as above, for every $\Phi\in C_c^\infty((0,\infty))$,
\begin{equation}\label{eq:singleton-bad-arithmetic}
 \sum_n|\Phi(n/x)\xi_k(n)|d_l(n+h)(1-G(n))
 \ll_{k,l,h,\Phi}xL^{k+l-2-\eta_0}+xL^Ce^{-cL^{1/10}}.
\end{equation}
Here $C,c>0$ depend only on $k,l,h,\Phi$.
\end{corollary}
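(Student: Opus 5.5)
We bound $|\xi_k(n)|\le d_k(n)+D_k(n)$, so the left side of \eqref{eq:singleton-bad-arithmetic} is at most the sum of the two expressions in Lemma~\ref{lem:positive-record-law} with $F=1-G$. Here we majorize $|\Phi(n/x)|$ by the indicator of a fixed number of dyadic ranges $n\asymp x$. Since $G(n)$ depends only on the records $n_{\mathcal C}$ and $(n+h)_{\mathcal C}$, it has the form $F(n_{\mathcal C},(n+h)_{\mathcal C})$ with $0\le F\le1$. The lemma then reduces the corollary to showing
\[
 \mu_{\mathcal C}\bigl(\text{fewer than }T_s\text{ small or fewer than }T_b\text{ big banks have }I_g=1\bigr)\ll L^{-\eta_0},
\]
with the error term $xL^Ce^{-cL^{1/10}}$ inherited directly from the lemma.

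The key point is that under $\mu_{\mathcal C}$ the local coordinates at distinct primes are independent. Hence the events $E_g=\{I_g=1\}$ for distinct banks are independent, because the banks are disjoint. Next we lower bound $\mu_{\mathcal C}(E_g)$ uniformly. The event $E_g$ requires exactly one prime $p\in P_g$ with $v_p(A)\ge1$, exactly one prime $q\in P_g$ with $v_q(B)\ge1$, and all other primes of $P_g$ at $(0,0)$. The two primes must be distinct, since both valuations cannot be positive at the same prime. Restricting to valuation exactly one, the probability is at least
\[
 \sum_{p\ne q\in P_g}\frac{v k}{pI_{p,h}}\cdot\frac{v l}{qI_{q,h}}\prod_{r\in P_g\setminus\{p,q\}}\frac{1-2/r}{I_{r,h}}.
\]
By \eqref{eq:local-unramified}, $I_{r,h}=1+(k+l-2)/r+O(r^{-2})$. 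It follows that
\[
 \prod_{r\in P_g}\frac{1-2/r}{I_{r,h}}\ge \exp\Bigl(-(k+l)V_g-o(1)\Bigr)\ge e^{-2(k+l)}(1-o(1)),
\]
using $V_g\le2$. Meanwhile
\[
 \sum_{p\ne q}\frac1{pq}=V_g^2-\sum_p p^{-2}\ge\tfrac14-o(1).
\]
This gives $\mu_{\mathcal C}(E_g)\ge \tfrac{kl}{4}e^{-2(k+l)}(1-o(1))\ge 2\beta\cdot\tfrac{3}{4}$ for large $x$, comfortably more than $\beta$. Here one must check that dropping the factors $v$ and $I^{-1}$ at $p,q$ only costs $1+o(1)$, which holds since all primes in $P_g$ are huge.

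With independence and $\mu(E_g)\ge\beta$, the number $N_s$ of small banks with $I_g=1$ stochastically dominates a Binomial$(|\mathcal G_s|,\beta)$ variable. The threshold $T_s\approx\beta|\mathcal G_s|/2$ is half the mean. A Chernoff bound then gives
\[
 \mu(N_s<T_s)\le\exp(-\beta|\mathcal G_s|/8)\le\exp(-\beta c_*J/8)=L^{-\beta c_*/8}=L^{-\eta_0},
\]
using $|\mathcal G_s|\ge c_*J$ and $J=\log L$. The same bound holds for the big banks, and a union bound finishes the argument. The ceiling in $T_s$ is harmless because $|\mathcal G_s|\to\infty$. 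The main obstacle is getting the uniform lower bound for $\mu(E_g)$ with a constant that genuinely matches the stated $\beta$. If the crude estimate above falls short, I would use $V_g\ge1/2$ more carefully, or else absorb the discrepancy via the factor-$2$ slack between $2\beta$ and $\beta$ built into the choice of $T_s$.
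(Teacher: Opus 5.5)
Your proposal is correct and follows essentially the same route as the paper. Both arguments use independence of the $I_g$ under $\mu_{\mathcal C}$, a uniform lower bound $\mathbb E I_g\ge\beta$, the multiplicative Chernoff bound, and then Lemma~\ref{lem:positive-record-law} with $F=1-G$ and $|\xi_k|\le d_k+D_k$. The only difference is how the lower bound is obtained: you restrict to valuation one and bound $V_g$ separately in each factor, while the paper uses the asymptotic $klV_g^2e^{-(k+l)V_g}$. Both give $\mathbb E I_g\ge 2\beta(1-o(1))$.
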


\begin{proof}
Notice that since the banks are pairwise disjoint, the events $I_g(n)$ are independent under the measure
$\mu_{\mathcal C}$. To compute their probabilities, put
\[
 r_p=\mu_p(0,0),\qquad
 a_p=\frac1{r_p}\sum_{j\ge1}\mu_p(j,0),\qquad
 b_p=\frac1{r_p}\sum_{j\ge1}\mu_p(0,j),
\]
with $\mu_p$ as in \eqref{eq:record-law}. Notice that $a_p, b_p$ are normalized by $r_p$. We then have
$r_p=1-(k+l)/p+O_{k,l}(p^{-2})$,
$a_p=k/p+O_{k,l}(p^{-2})$ and $b_p=l/p+O_{k,l}(p^{-2})$.
Summing over the two distinct primes that divide the endpoints gives
\begin{equation}\label{eq:singleton-probability}
 \begin{aligned}
 \mathbb E_{\mu_{\mathcal C}}I_g
 &=\left(\prod_{p\in P_g}r_p\right)
          \sum_{\substack{p,q\in P_g\\p\ne q}}a_pb_q\\
 &=klV_g^2e^{-(k+l)V_g}
        +O_{k,l}\!\left(\sum_{p\in P_g}p^{-2}\right).
 \end{aligned}
\end{equation}
The error tends to zero uniformly over the banks. Using
$1/2\le V_g\le2$, we obtain $ \mathbb E_{\mu_{\mathcal C}}I_g\ge\beta$ for sufficiently large $x$. For independent Bernoulli variables $I_1,\ldots,I_N$ having means
at least $\beta$, the multiplicative Chernoff bound gives
\[
 \mathbb P\!\left(\sum_{j=1}^N I_j<\lceil\beta N/2\rceil\right)
 \le e^{-\beta N/8}.
\]
Thus, $ \mathbb E_{\mu_{\mathcal C}}(1-G)\ll L^{-\eta_0}$.
Since $|\xi_k|\le d_k+D_k$, the result follows by applying
Lemma~\ref{lem:positive-record-law} with $F=1-G$ and bounding
$|\Phi|$ on its support.
\end{proof}

\subsection{Deduction of the main theorem}
\label{sec:completion}

We now deduce Theorem~\ref{thm:main} from Proposition~\ref{prop:centered-marked-correlation} and
Corollary~\ref{cor:good-records}, together with the density model
asymptotic of Proposition~\ref{prop:scalar-proxy}. If $\min\{k,l\}=1$, then $\Sg_{k,l}(h)=1$ and the assertions of
Theorem~\ref{thm:main} follow from the classical divisor mean by
partial summation and a fixed translation. When $k=l=1$, integer
counting suffices. We may therefore assume $k,l\ge2$.

Recall from \eqref{eq:model-def} that $\xi_k=d_k-D_k$. We will use the notation introduced in Section~\ref{subsec:good-records}. 
Corollary~\ref{cor:good-records}
already bounds the contribution with $G(n)=0$, so it remains to
estimate the correlation with $G(n)$ inserted. Recall that $I_g(n)=1$
means that $P_g$ contains exactly one prime divisor for each of
$n$ and $n+h$ and $G(n)=1$ when this happens in at least $T_s$
small banks and $T_b$ big banks. We will express $G(n)$ as a sum
of products of the $I_g(n)$, to which
Proposition~\ref{prop:centered-marked-correlation} applies.

Put $\mathcal G=\mathcal G_s\sqcup\mathcal G_b$ and $N=|\mathcal G|$.
Separating according to the exact set $\mathcal A$ of banks for
which $I_g(n)=1$ and then expanding the factors $1-I_g(n)$ gives
\begin{equation}\label{eq:singleton-boolean}
 G(n)=
 \sum_{\substack{\mathcal A\subseteq\mathcal G\\
       |\mathcal A\cap\mathcal G_s|\ge T_s\\
       |\mathcal A\cap\mathcal G_b|\ge T_b}}
 \prod_{g\in\mathcal A}I_g(n)\prod_{g\notin\mathcal A}(1-I_g(n))
 =\sum_{\mathcal U\subseteq\mathcal G}c_{\mathcal U}
                                      \prod_{g\in\mathcal U}I_g(n).
\end{equation}
For fixed $\mathcal A$, the expansion produces
$2^{N-|\mathcal A|}$ terms. Allowing all subsets $\mathcal A$
bounds their total number by
$\sum_{\mathcal A\subseteq\mathcal G}2^{N-|\mathcal A|}=3^N$.
Moreover, every resulting $\mathcal U$ contains its original
$\mathcal A$, so it still has enough banks of each size. Thus
\begin{equation}\label{eq:singleton-boolean-cost}
 \sum_{\mathcal U}|c_{\mathcal U}|\le3^N=L^{O(1)},
 \qquad
 c_{\mathcal U}\ne0\Longrightarrow
 \begin{cases}
 |\mathcal U\cap\mathcal G_s|\ge T_s,\\
 |\mathcal U\cap\mathcal G_b|\ge T_b.
 \end{cases}
\end{equation}
By the definitions of $T_s,T_b$ and \eqref{eq:banks}, each such
$\mathcal U$ contains at least $(\beta c_*/2)J$ banks of each size,
as required in Proposition~\ref{prop:centered-marked-correlation}.

We now identify the correlation attached to one of these products.
Fix such a $\mathcal U$ and put $\mathcal P=\bigcup_{g\in\mathcal U}P_g$.
The conditions $I_g(n)=1$ prescribe the number of distinct prime
divisors, but still allow higher powers. Let us now discard their contribution. By \eqref{eq:core-factor}
and \eqref{eq:singleton-weight} we have
$|\alpha_{\mathcal P}|\le d_k+D_k$, $\beta_{\mathcal P}\le d_l$
and $\Omega_{\mathcal P}\le L^{O(1)}$. Using also
$D_k\ll_k L^{k-1}d_k$, Cauchy--Schwarz and the standard fourth
divisor moment bounds show that on a fixed dyadic interval the contribution
of higher powers is at most
\[
 \left(\sum_{p\in\mathcal C}O(x/p^2)\right)^{1/2}
 \left(xL^C\right)^{1/2}
 \ll xL^Ce^{-cL^{1/8}},
\]
uniformly in $\mathcal U$. Summing over $\mathcal U$ preserves
negligibility by \eqref{eq:singleton-boolean-cost}.

Outside this exceptional set, $\prod_{g\in\mathcal U}I_g(n)=1$ means that both
$n$ and $n+h$ have exactly one prime factor from each
selected bank, counting multiplicity. The primes in $\mathcal P$ therefore contribute the factors
$k^{|\mathcal U|}$ and $l^{|\mathcal U|}$ to the two divisor weights. Thus, outside of the integers we have just discarded, it follows from \eqref{eq:core-factor} and the definition \eqref{eq:singleton-weight} of
$\Omega_{\mathcal P}$ that
\begin{equation}\label{eq:singleton-reconstruct-identity}
 \xi_k(n)d_l(n+h)\prod_{g\in\mathcal U}I_g(n)
 =\left(\prod_{g\in\mathcal U}klV_g^2\right)
 \alpha_{\mathcal P}(n)\beta_{\mathcal P}(n+h)
 \Omega_{\mathcal P}(n)\Omega_{\mathcal P}(n+h).
\end{equation}
We can now apply Proposition~\ref{prop:centered-marked-correlation}
with $c=\beta c_*/2$ and $\Psi(u)=u\Phi(u)$. Since
$\Psi(n/x)/n=\Phi(n/x)/x$, this gives an arbitrary fixed
logarithmic saving for each ordinary correlation above.
Using $V_g\le2$, we see that the sum of the $\prod_{g\in\mathcal U}klV_g^2$ factors contributes at most $\sum_{\mathcal U} |c_{\mathcal U}| (4kl)^N =L^{O_{k,l}(1)}$ by (\ref{eq:singleton-boolean-cost}). Multiplying \eqref{eq:singleton-boolean} by
$\Phi(n/x)\xi_k(n)d_l(n+h)$, summing over $n$, and choosing the
saving in the proposition large enough to absorb this cost gives
\[
 \sum_n\Phi(n/x)\xi_k(n)d_l(n+h)G(n)\ll_A xL^{-A}
 \qquad\text{for every fixed }A>0.
\]
Combining this with Corollary~\ref{cor:good-records} we conclude that
\[
 \sum_n\Phi(n/x)\xi_k(n)d_l(n+h)
                 \ll xL^{k+l-2-\eta_0}.
\]
Adding the density model asymptotic of
Proposition~\ref{prop:scalar-proxy} then proves \eqref{eq:smooth-main}
with any fixed $0<\eta<\min(\eta_0,1/2)$.

Finally, positivity gives the usual passage to a sharp cutoff.
Lemma~\ref{lem:positive-record-law} with $F=1$ and dyadic summation
give $\sum_{n\le y}d_k(n)d_l(n+h)\ll_{k,l,h}
y\{\log(2y)\}^{k+l-2}$. Approximate
$\one_{[\varepsilon,1]}$ from above and below by fixed nonnegative
smooth functions with integrals $1+O(\varepsilon)$.
The segment $n\le\varepsilon x$ contributes
$O_{k,l,h}(\varepsilon xL^{k+l-2})$; applying
\eqref{eq:smooth-main} and letting first $x\to\infty$ and then
$\varepsilon\to0$ proves \eqref{eq:sharp-main}.

\section{Centered major-arc estimates}
\label{sec:analytic-services}

To establish Proposition~\ref{prop:centered-marked-correlation},
we will need to control the major-arc behavior of
$\alpha_{\mathcal P}$ through expressions related to its
Dirichlet polynomials $\sum_n\alpha_{\mathcal P}(n)n^{it}$.
As usual, we will control large values of $|t|$
through cancellation in the corresponding prime sums, while
small values of $|t|$ will be handled using the mean-value
behavior of $\alpha_{\mathcal P}$, whose main term vanishes
by construction. The purpose of this section is to establish
the estimates underlying both arguments.

\subsection{The smooth prime sums}
\label{sec:zero-free-prime-services}

We need cancellation in prime sums twisted by $p^{it}$, for an appropriate range of
$|t|$. We give below the short contour calculation required. We will take advantage of the zero-free region of \cite[Theorem~1.1]{OpenAIZeroFree}, although a weaker estimate would suffice. Write
$\delta_\chi=\one_{\{\chi=\chi_0\}}$, where $\chi_0$ is the
principal character for the modulus in question.

\begin{lemma}[Smooth twisted prime sums]
\label{lem:zf-smooth-prime}
Fix $a,B,C>0$, put $L=\log x$,
and suppose
\[
 P\ge\exp(L^a),\qquad q\le L^B,\qquad |t|\le x^C.
\]
Let $w$ be smooth, supported in $[1/2,3]$, with
$(u\partial_u)^jw(u)\ll_j L^{C_j}$ for every fixed $j$. There is a fixed $D$ such that,
uniformly for characters modulo $q$,
\begin{equation}
 \frac1P\sum_p\chi(p)w(p/P)p^{it}
 =\delta_\chi P^{it}\int_0^\infty
       \frac{w(u)u^{it}}{\log(Pu)}\,du
       +O(P^{-1/16}L^D).
 \label{eq:zf-smooth-prime}
\end{equation}
For every fixed $j\ge0$, the integral on the right is
$O_j(L^{D_j}(1+|t|)^{-j}/\log P)$, with a fixed $D_j$.
\end{lemma}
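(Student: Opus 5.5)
We plan to prove the lemma by Mellin inversion applied to the weight and a contour shift against $\log L(s,\chi)$, with the zero-free region of \cite[Theorem~1.1]{OpenAIZeroFree} controlling the shifted integral.

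\emph{Step 1: reduce to von Mangoldt sums.} Write $\sum_p\chi(p)w(p/P)p^{it}$ as $\sum_n\Lambda(n)\chi(n)n^{it}\,w(n/P)/\log n$, up to the contribution of prime powers $p^m$ with $m\ge2$. That contribution is $O(P^{1/2})$, which is acceptable. The factor $1/\log n$ is absorbed into the weight: set $W(u)=w(u)/\log(Pu)$. This $W$ is smooth, supported in $[1/2,3]$, and satisfies $(u\partial_u)^jW\ll_j L^{C_j}$.

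\emph{Step 2: Mellin inversion.} Let $\widetilde W(s)=\int_0^\infty W(u)u^{s-1}\,du$. Then the sum equals
\[
\frac1{2\pi i}\int_{(2)}\Bigl(-\frac{L'}{L}(s-it,\chi)\Bigr)\widetilde W(s)P^s\,ds .
\]
Integrating by parts $j$ times gives $\widetilde W(s)\ll_j L^{C_j}(1+|s|)^{-j}$ uniformly for $\Re s\in[1/2,2]$, say.

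\emph{Step 3: contour shift.} We use the zero-free region to move the contour to $\Re s=1-c/\log(q(|t|+T+3))$, truncated at height $T=x^{\varepsilon}$ and joined by horizontal segments. The pieces $|\Im s|>T$ are negligible by the rapid decay of $\widetilde W$. In the range $q\le L^B$, $|t|\le x^C$ there is no exceptional zero, or any Siegel zero is handled by the specific theorem cited; since $q$ is only a power of $L$, Siegel's bound also suffices, with ineffective constants that are harmless here. On the shifted line we use the standard bound $L'/L\ll\log^2(q(|s|+|t|))$. This gives the bound $P^{1-\delta}L^D$ with
\[
\delta=\frac{c}{\log(q x^{C})^{2/3}\,(\log\log)^{1/3}}\asymp L^{-2/3-o(1)}.
\]

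\emph{Main obstacle.} The power saving $P^{-1/16}$ is what we have to fight for: the classical region above gives only $P^{-\delta}$, whereas $P\ge\exp(L^a)$ with $a$ possibly small. This is exactly why the stated estimate leans on \cite[Theorem~1.1]{OpenAIZeroFree}. We read that theorem as supplying a zero-free region, or a zero-density-type region, that lets us shift to $\Re s=15/16$ in the needed height range, up to exceptional zeros whose total contribution is bounded by $L^D P^{15/16}$. The shift to $15/16$ would proceed as in Step 3, and this step is where the real work lies. If the theorem is instead a Vinogradov-type region, we would fall back on combining it with the condition $P\ge\exp(L^a)$: a region of width $(\log |t|)^{-2/3}$ then only gives a saving of $\exp(-L^{a-2/3})$ when $a>2/3$. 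So we would rely on the cited result having the strength the statement uses.

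\emph{Step 4: the residue and the integral bound.} When $\chi=\chi_0$, the pole of $-L'/L(s-it,\chi_0)$ at $s=1+it$ contributes $\widetilde W(1+it)P^{1+it}$. Dividing by $P$, this is $P^{it}\int_0^\infty w(u)u^{it}/\log(Pu)\,du$, which is the displayed main term. The primes dividing $q$ change $L(s,\chi_0)$ from $\zeta$ only by $\sum_{p\mid q}\log p\ll\log q$ terms, so they are negligible. For the final bound on the integral we integrate by parts $j$ times in $u\partial_u$. The factor $1/\log(Pu)$ has $(u\partial_u)^i$-derivatives of size $\ll(\log P)^{-1}$, since $\log P\ge L^a$. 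Combined with the assumed bounds on $w$, this gives $O_j(L^{D_j}(1+|t|)^{-j}/\log P)$.
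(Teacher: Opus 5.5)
Your overall strategy is the paper's. You absorb $1/\log n$ into $f_P(u)=w(u)/\log(Pu)$ (your $W$), Mellin-invert against $-L'/L$, and read off the main term as the principal residue $\delta_\chi P^{it}\widehat f_P(1+it)$. You discard prime powers trivially and get the decay of that integral by integrating by parts in $u\partial_u$. All of that is fine.

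The gap is in the one step that carries the lemma. Step~3 as written gives only a saving $P^{-\delta}$ with $\delta\asymp L^{-2/3-o(1)}$, and even $\delta\asymp 1/L$ for the region $1-c/\log(q(|t|+T+3))$ you actually state. As you correctly diagnose, this never gives $P^{-1/16}$ and is worthless once $a\le 2/3$, so the lemma really does rest on the strength of the cited region. But the replacement shift to $\Re s=15/16$ is only announced (``this is where the real work lies''), never carried out, and some of the readings you hedge with would not work:
\begin{itemize}
\item A zero-density input cannot give a bound that is pointwise and uniform in $|t|\le x^C$. A single zero $\beta+i\gamma$ with $\beta$ near $1$ and $\gamma$ within $L^{O(1)}$ of $-t$ can contribute a term of size about $P^{\beta-1}$ at that $t$.
\item To land exactly on $\Re s=15/16$ you need zero-freeness with a fixed margin to the left of that line. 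Zero-freeness of the half-plane $\Re s>15/16$ alone is not enough, since zeros could approach the line and your Step~3 device would then only reach a line strictly to its right.
\end{itemize}

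What is missing, and what the paper supplies, is a bound for $L'/L$ on the whole strip $15/16\le\sigma\le2$. It comes from the local logarithmic-derivative formula: for $s=\sigma+iv$ in the strip and a fixed radius $r$,
\[
 -\frac{L'}{L}(s,\chi)=\frac{\delta_\chi}{s-1}-\sum_{|\rho-s|<r}\frac1{s-\rho}+O_r\bigl(\log(q(|v|+3))\bigr).
\]
The zero-free region of \cite[Theorem~1.1]{OpenAIZeroFree} is used precisely to guarantee that no zero lies within distance $1/16$ of this strip. Taking $r<1/16$, the sum over zeros is empty, and
\[
\frac{L'}{L}(s,\chi)+\frac{\delta_\chi}{s-1}\ll\log(q(|v|+3)),
\]
with the Euler factors at $p\mid q$ for $\chi_0$ adding only $O(\log q)$. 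With this bound, shifting to $\Re s=15/16$ leaves an integral of size
\[
 P^{-1/16}\frac{L^{D_2}}{\log P}\int_{\mathbb R}\frac{1+\log(q(|z-t|+3))}{(1+|z|)^2}\,dz\ll P^{-1/16}L^D,
\]
because $\log(q(|t|+3))\ll L$ for $q\le L^B$ and $|t|\le x^C$. The horizontal segments are handled by the same bounds. Replace Step~3 by this argument; the classical contour, the Siegel-zero discussion and the computation of $\delta$ then become unnecessary.
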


\begin{proof}
The standard local logarithmic-derivative formulas
\cite[Proposition~19 and Exercise~45(ii)]{TaoNotes2} together with the zero-free region of \cite[Theorem~1.1]{OpenAIZeroFree} give
\begin{equation}
 \frac{L'}L(s,\chi)+\frac{\delta_\chi}{s-1}
       \ll\log\bigl(q(|v|+3)\bigr),
 \qquad s=\sigma+iv,\quad 15/16\le\sigma\le2.
 \label{eq:zf-log-derivative}
\end{equation}
Indeed, take the radius in those formulas smaller than $1/16$,
so that no zero contributes. The principal-character Euler factors
\cite[(46)]{TaoNotes2} add $O(\log q)$.

To pass from von Mangoldt weights to primes, set
\[
 f_P(u)=\frac{w(u)}{\log(Pu)},\qquad
 \widehat f_P(s)=\int_0^\infty f_P(u)u^{s-1}\,du.
\]
Logarithmic integration by parts gives, uniformly in the strip,
\[
 |\widehat f_P(\sigma+iz)|
       \ll_j\frac{L^{D_j}}{\log P}(1+|z|)^{-j}.
\]
Mellin inversion therefore expresses the von Mangoldt sum as
\[
 \frac1P\sum_{n\ge2}\Lambda(n)\chi(n)
             \frac{w(n/P)}{\log n}n^{it}
 =\frac1{2\pi i}\int_{(2)}
       -\frac{L'}L(s,\chi)P^{s-1+it}\widehat f_P(s+it)\,ds.
\]
Shift to $\Re s=15/16$. The only possible pole is the principal
pole at $s=1$, whose residue is
$\delta_\chi P^{it}\widehat f_P(1+it)$, giving the stated main
term. By \eqref{eq:zf-log-derivative}, the new integral is at most
\[
 P^{-1/16}\frac{L^{D_2}}{\log P}
 \int_{\mathbb R}
       \frac{1+\log(q(|z-t|+3))}{(1+|z|)^2}\,dz
       \ll P^{-1/16}L^D.
\]
Here $z=\Im s+t$ and the height costs only
$\log(q(|t|+3))\ll L$. The same bounds make the horizontal
integrals vanish.

At $n=p^r$, the factor $\Lambda(n)/\log n$ equals $1/r$.
The terms with $r\ge2$ contribute
$O(L^{C_0}P^{-1/2}\log(2P))$, which is absorbed by the error.
Finally, logarithmic integration by parts in
$\widehat f_P(1+it)$ gives the asserted decay of the principal
integral.
\end{proof}

\subsection{The exact rough divisor sum}
\label{sec:exact-rough-divisor}

We next establish the cancellation needed at low Mellin
frequencies. Writing $n=Av$, where $A=n_{\mathcal S}$
and $(v,\mathcal S)=1$, the definition of our density
model reduces the problem to comparing the mean values
of $d_j(v)$ and $M_{j,\mathcal S}(v)/V$.
Since their corresponding principal residues agree exactly,
this will yield the needed estimate below.

Retain $L=\log x$, $y_0=\exp(L^{2/5})$, $\mathcal S=\{p:p\le y_0\}$,
$V=\prod_{p\in\mathcal S}(1-p^{-1})$, and $E=\exp(\sqrt L)$
from Section~\ref{sec:records}. Put
\[
 \theta=(\log y_0)^{-1}=L^{-2/5},\qquad
 F_{a,\mathcal S}(s)=\prod_{p\in\mathcal S}(1-p^{-s})^a
 \quad(a\ge1).
\]
We will establish the following general result that we shall require later.

\begin{lemma}\label{lem:combined-mellin}
Fix $C>0$. Let $R=x^{1+o(1)}$ and let $q\le L^C$ be $\mathcal{S}$-smooth. Fix integers $j,K\ge1$ and
a real $D\ge0$. For every $\mathcal S$-smooth $A$, let $b_A$ be a $q$-periodic function with $|b_A|\le1$ and let
$\psi_A$ have support in one fixed compact subinterval of
$(0,\infty)$ and satisfy
$\sup_A\|(u\partial_u)^m\psi_A\|_\infty\ll_m L^{C_m}$
for every $m$. Let
\begin{equation}
 S_A(\tau)=\sum_{\substack{v\ge1\\(v,\mathcal S)=1}}
      b_A(v)\psi_A(Av/R)(Av)^{i\tau}
       \left\{d_j(v)-\frac{M_{j,\mathcal S}(v)}V\right\}.
 \label{eq:centered-model}
\end{equation}
Then, there exist constants $c,C'>0$, depending only on the fixed data
above, such that, uniformly over $|\tau|\le L^D$,
\begin{equation}
 \frac1R\sum_{A = A_{\mathcal S}}d_K(A)|S_A(\tau)|
       \ll L^{C'}\exp(-cL^{1/10}),
 \label{eq:centered-mellin-bound}
\end{equation}
 where the sum is over all $\mathcal S$-smooth choices of $A$.
\end{lemma}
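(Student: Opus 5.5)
First, truncate $A$: the terms with $A>E$ are handled by Rankin's inequality. Bound trivially $|S_A(\tau)|\ll\sum_{v\asymp R/A}d_j(v)L^{j-1}$. Then $\sum_{A>E}d_K(A)\sum_{v}\ldots$ is a sum over $n=Av\asymp R$ of $d_{K}(n_{\mathcal S})d_j(n)L^{O(1)}\mathbf 1_{\{n_{\mathcal S}>E\}}$. By \eqref{eq:full-record-tail} (applied with $a=\max(j,K)$ and Cauchy--Schwarz if needed), this is $\ll RL^{C'}\exp(-L^{1/10})$. So it suffices to treat the $\mathcal S$-smooth $A\le E$, and to save $\exp(-cL^{1/10})$ on each with a total weight $\sum_{A}d_K(A)/A\ll V^{-K}\ll L^{K}$.

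For fixed $A\le E$, expand $b_A$ in Dirichlet characters modulo $q$. Since $(v,\mathcal S)=1$ and $q$ is $\mathcal S$-smooth, $v$ is coprime to $q$, so $b_A(v)=\sum_{\chi}\hat b_A(\chi)\chi(v)$ with $\sum_\chi|\hat b_A(\chi)|\le q^{1/2}\cdot O(1)\le L^{O(1)}$. Then apply Mellin inversion to $\psi_A(Av/R)(Av)^{i\tau}$ with $N=R/A=x^{1-o(1)}$. The $d_j$ part becomes
\[
\frac1{2\pi i}\int_{(2)}L(s,\chi)^j F_{j,\mathcal S}(s,\chi)\widehat\psi_A(s-i\tau)N^{s}\,ds,
\]
where $F_{j,\mathcal S}(s,\chi)=\prod_{p\in\mathcal S}(1-\chi(p)p^{-s})^j$. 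Shift to $\Re s=1-c_0/\log(q(|t|+3))$-type contours, or more simply to $\Re s=1-\theta'$ with $\theta'\approx L^{-2/5}/10$. The main point is that on $\Re s\ge 1-\theta/2$ the Euler product satisfies $|F_{j,\mathcal S}(s,\chi)|\le\prod_{p\le y_0}(1+p^{-1+\theta/2})^j\ll L^{O(1)}$, because $p^{\theta/2}\le e^{1/2}$. Meanwhile $N^{-\theta/2}=\exp(-\tfrac12 L^{3/5}(1+o(1)))$, which is much better than needed. The zero-free region of \cite[Theorem~1.1]{OpenAIZeroFree}, or even the classical region $\sigma>1-c/\log(q(|v|+3))$, contains $\Re s\ge1-\theta/2$ only for $|v|\le\exp(cL^{2/5})$. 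So the contour must be a truncated one: on the segment $\Re s=1-\theta/2$ for $|\Im s|\le T=\exp(L^{1/5})$, where there are no zeros since $1/\log(qT)\gg L^{-1/5}\gg\theta$, and back on the line $\Re s=1$ region beyond. The tails $|\Im s|>T$ are killed by rapid decay of $\widehat\psi_A$: $(u\partial_u)^m$ bounds give $\ll L^{C_m}(1+|\Im s-\tau|)^{-m}$, and $|\tau|\le L^D\ll T$. Polynomial bounds on $L(s,\chi)$ in the zero-free region, which follow from the standard convexity/log-derivative estimates of \cite{TaoNotes2}, keep everything at size $L^{O(1)}T^{O(1)}$ times the decay. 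The result is that the $d_j$ sum equals $\delta_\chi$ times the residue at $s=1$ of $\zeta(s)^jF_{j,\mathcal S}(s)\widehat\psi_A(s-i\tau)N^s$, plus $O(NL^{O(1)}\exp(-cL^{1/10}))$, with a lot of room. Non-principal characters give no residue. For those, the model term must also be shown small; see below.

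The model term $\sum_v\chi(v)\psi_A(\cdot)(Av)^{i\tau}M_{j,\mathcal S}(v)/V$ has $M_{j,\mathcal S}(v)=\tilde M_{j,\mathcal S}(v)$ throughout the support since $v\ge x^{1/2}$. It is a sum of a smooth function of $v$ against $\chi(v)\mathbf 1_{(v,\mathcal S)=1}/V$. Write this indicator via its Dirichlet series $L(s,\chi)F_{1,\mathcal S}(s,\chi)$ and perform the same contour shift. Up to the same negligible error, this yields $\delta_\chi$ times the residue of $\zeta(s)F_{1,\mathcal S}(s)/V$ integrated against $\tilde M_{j,\mathcal S}(u)\psi_A(Au/R)(Au)^{i\tau}$. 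That residue equals $V\cdot V^{-1}\int\tilde M_{j,\mathcal S}(u)\psi_A(Au/R)(Au)^{i\tau}\,du$. By definition \eqref{eq:density-def}, $\tilde M_{j,\mathcal S}(u)$ is the residue density of $\zeta^jF_{j,\mathcal S}$. Exchanging residue and integral, which is justified since $\tilde M$ is a polynomial in $\log u$, shows that it coincides exactly with the principal residue from the $d_j$ side. Hence the main terms cancel exactly and only the contour errors remain.

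The main obstacle is making the contour shift uniform in all the data. One needs bounds for $L(s,\chi)^j$ of polynomial size in the narrow zero-free strip, together with the verification that $F_{j,\mathcal S}$ does not blow up beyond $L^{O(1)}$ there. The savings are also shaped by the constraint $\theta\ll1/\log(qT)$ against $T$ large enough to beat the $L^{C_m}$ derivative losses; $T=\exp(L^{1/5})$ balances these. Summing over $A\le E$ with weight $d_K(A)/A\ll L^K$ and over $\chi$ with $L^{O(1)}$ then gives \eqref{eq:centered-mellin-bound}.
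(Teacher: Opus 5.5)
Your proposal is correct and follows essentially the same route as the paper. You discard $A>E$ via the tilted moment bound, expand $b_A$ in characters modulo $q$, and shift the Mellin contour to $\Re s\approx 1-\theta$, where the finite Euler product is only $L^{O(1)}$. Then the principal residues of the $d_j$-sum and of the $a=1$ sum weighted by $M_{j,\mathcal S}(Nu)/V$ cancel exactly.

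The only real difference is that your zero-free-region discussion and the truncated contour at $T=\exp(L^{1/5})$ are unnecessary. The integrand contains $L(s,\chi)^j$ itself, not $1/L$ or $L'/L$, so it is holomorphic apart from $s=1$ and zeros play no role. The paper therefore shifts the whole line to $\Re s=1-\theta$, using only the elementary bound $|L(1-\theta+it,\chi)|\ll q(\theta^{-1}+1+|t|)$ and the rapid decay of the Mellin transform.
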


\begin{proof}
The two sums will have the same principal residue, while the contribution of the
nonprincipal characters will be shown to be negligible. We first truncate
$A$, then make this comparison uniformly in $A$ and in the
periodic tests.

\paragraph{1. Removing large values of $A$.}
Let $\mathcal R(\tau)$ denote the left side of
\eqref{eq:centered-mellin-bound} and let $\mathcal R_E(\tau)$ denote its restriction to $A\le E$.
Let us first discard the contribution of $S_A$ for $A > E$.
 For $B=\max(K,j)$, the absolute
coefficient of each term inside $S_A$ contributes at most
$R^{-1} L^{O(1)}d_B(n)\one_{\{n_{\mathcal S}>E\}}$ to $\mathcal{R}(\tau)$.
Indeed, $(A,v)=1$ gives $d_K(A)d_j(v)\le d_B(n)$, $A=n_{\mathcal S}$ and
$M_{j,\mathcal S}(v)/V\ll L^{O(1)}$ on the support, by \eqref{Mkbound}.
Since $n\asymp R=x^{1+o(1)}$, Lemma~\ref{lem:record-moments}
therefore gives
\begin{equation}
 0\le\mathcal R(\tau)-\mathcal R_E(\tau)
       \ll L^{O(1)}\exp(-L^{1/10})
       \qquad(\tau\in\mathbb R).
 \label{eq:centered-record-tail}
\end{equation}

\paragraph{2. Fixing $A$ and a character.}
The low-frequency restriction allows us to absorb the Mellin
phase into the smooth cutoff. Indeed, $
 (Av)^{i\tau}=R^{i\tau}(Av/R)^{i\tau}$ and  $\psi_A(u)u^{i\tau}$ still has logarithmic derivatives
bounded by fixed powers of $L$ when $|\tau|\le L^D$. The extra factor $R^{i \tau}$ is fixed with
$|R^{i\tau}|=1$. Thus we may take $\tau=0$, redefining $\psi_A(u)u^{i\tau}$ as
$\psi_A$.

Because $(v,\mathcal S)=1$ implies $(v,q)=1$, we may expand $b_A$
in characters modulo $q$. Every normalized Fourier coefficient has
absolute value at most one, so the expansion costs at most
$\varphi(q)\le q=L^{O(1)}$.
Fix $A\le E$ and a character and put $Q=R/A=x^{1+o(1)}$.
We will establish the estimate
\begin{equation}
 \frac1Q\sum_{(v,\mathcal S)=1}\chi(v)\psi_A(v/Q)
       \left\{d_j(v)-\frac{M_{j,\mathcal S}(v)}V\right\}
       \ll Q^{-\theta}L^{O(1)}.
 \label{eq:fixed-part-centered}
\end{equation}
Notice that if $x$ is sufficiently large, then for $v/Q$ to lie in the support of $\psi_A$ we must have $v$ in $[x^{1/2},x^2]$. Thus $M_{j,\mathcal S}(v)$ is exactly the residue
polynomial in \eqref{eq:density-def}, and
$M_{j,\mathcal S}(Qu)/V$ has logarithmic derivatives bounded by
fixed powers of $L$, by \eqref{eq:density-finite} and
\eqref{eq:record-derivatives}.

\paragraph{3. Cancelling the residues.}
We will estimate both terms in the left-hand side of
\eqref{eq:fixed-part-centered} and compare their main terms. For every fixed positive integer
$a$ and smooth $\psi$ with the stated support and
derivative bounds, we claim that
\begin{equation}
 \frac1Q\sum_{(v,\mathcal S)=1}d_a(v)\chi(v)\psi(v/Q)
 =\delta_\chi\int_0^\infty
            \psi(u)M_{a,\mathcal S}(Qu)\,du
       +O(Q^{-\theta}L^{O(1)}).
 \label{eq:rough-smooth-evaluation}
\end{equation}
The relevant Dirichlet series is
\begin{equation}
 \mathcal D_{a,\mathcal S}(s,\chi)
 :=\sum_{(v,\mathcal S)=1}\frac{d_a(v)\chi(v)}{v^s}
 =L(s,\chi)^a\prod_{p\in\mathcal S}(1-\chi(p)p^{-s})^a
       \qquad(\Re s>1).
 \label{eq:full-rough-character-series}
\end{equation}
We will shift the contour to $\Re s=1-\theta$. The choice $\theta=1/\log y_0$ will allow us to save $Q^{-\theta}$, while retaining the bound
$p^\theta\le e$ for every $p \in \mathcal S$. In particular, throughout
the strip to its right the finite Euler product is bounded by
\begin{equation}
 \prod_{p\in\mathcal S}(1+p^{-1+\theta})^a
 \le\exp\left(a\sum_{p\in\mathcal S}p^{-1+\theta}\right)
 \ll L^{C_a},
 \label{eq:full-rough-euler-bound}
\end{equation}
by Mertens' estimate. The elementary bounds
\cite[(24), (49)]{TaoNotes2} give, for every character,
\begin{equation}
 |L(1-\theta+it,\chi)|
       \ll q(\theta^{-1}+1+|t|).
 \label{eq:elementary-moving-line-growth}
\end{equation}
This elementary bound is sufficient because $q$ and $\theta^{-1}$
are bounded by fixed powers of $L$.

Write $\widehat\psi(s)=\int_0^\infty\psi(u)u^{s-1}\,du$.
Its transform satisfies
$|\widehat\psi(\sigma+it)|\ll_m L^{C_m}(1+|t|)^{-m}$
uniformly for $1-\theta\le\sigma\le2$. Mellin inversion gives
\[
 \frac1Q\sum_{(v,\mathcal S)=1}d_a(v)\chi(v)\psi(v/Q)
 =\frac1{2\pi i}\int_{(2)}
       \mathcal D_{a,\mathcal S}(s,\chi)
                         Q^{s-1}\widehat\psi(s)\,ds.
\]
Now shift to $\Re s=1-\theta$. The preceding growth bounds and Mellin
decay make the horizontal integrals vanish and bound the new
vertical integral by
\[
 Q^{-\theta}L^{O(1)}\int_{\mathbb R}(1+|t|)^{a-m}\,dt
       \ll Q^{-\theta}L^{O(1)}
       \qquad(m>a+1).
\]

For a nonprincipal character no pole is crossed. For the principal
character, every prime dividing $q$ already belongs to
$\mathcal S$, so its Euler factors combine to give exactly
\[
 \mathcal D_{a,\mathcal S}(s,\chi_0)
       =\zeta(s)^aF_{a,\mathcal S}(s).
\]
The residue contributed by the principal character is therefore
\[
\begin{aligned}
 &\Res_{s=1}\left\{\zeta(s)^aF_{a,\mathcal S}(s)
                               Q^{s-1}\widehat\psi(s)\right\}\\
 &\quad=\Res_{s=1}\int_0^\infty
        \psi(u)\zeta(s)^aF_{a,\mathcal S}(s)(Qu)^{s-1}\,du\\
 &\quad=\int_0^\infty\psi(u)\Res_{s=1}
        \left\{\zeta(s)^aF_{a,\mathcal S}(s)(Qu)^{s-1}\right\}\,du\\
 &\quad=\int_0^\infty\psi(u)M_{a,\mathcal S}(Qu)\,du,
\end{aligned}
\]
where the interchange is easily justified by the smoothness and compact support of $\psi$. The last equality follows from \eqref{eq:density-def}, since
$Qu\in[x^{1/2},x^2]$ on the support of $\psi$, where $M_{a,\mathcal S}$
agrees with the residue density. 

This proves
\eqref{eq:rough-smooth-evaluation}, which we now apply first with
$a=j$, $\psi=\psi_A$, and then with $a=1$,
$\psi(u)=\psi_A(u)M_{j,\mathcal S}(Qu)/V$.
Since $M_{1,\mathcal S}=V$, the two principal terms are both
\[
 \delta_\chi\int_0^\infty
             \psi_A(u)M_{j,\mathcal S}(Qu)\,du
\]
and thus cancel exactly, proving \eqref{eq:fixed-part-centered}.

\paragraph{4. Summing over $A$.}
We need to sum the bounds in \eqref{eq:fixed-part-centered} with weight $d_K(A)Q/R=d_K(A)/A$.
The character expansion costs only a fixed power of $L$ and
\[
 \sum_{\substack{A\le E\\A_{\mathcal S}=A}}\frac{d_K(A)}{A}
 \le\prod_{p\in\mathcal S}(1-p^{-1})^{-K}
 =V^{-K}\ll L^{O(1)}.
\]
Since $Q=R/A$ and every $A$ in the sum $\mathcal{R}_E$ we need to estimate has size at most $E$, we conclude that
\[
 \mathcal R_E(\tau)\ll L^{O(1)}(R/E)^{-\theta}
       \ll L^{C'}\exp(-cL^{3/5}).
\]
Combining this with \eqref{eq:centered-record-tail} proves
\eqref{eq:centered-mellin-bound}.
\end{proof}

\section{Estimating the comparison correlations}
\label{sec:arithmetic-bridge}

The key feature of the graph machinery is that it allows us to relate
the fixed-shift correlation involving
$\alpha_{\mathcal P}(n)\beta_{\mathcal P}(n+h)$
to averages of correlations involving
$\alpha_{\mathcal P}(n)\beta_{\mathcal P}(n+hD_1D_2)$,
where $D_1$ and $D_2$ are independent products of freely
chosen primes. The purpose of this section is to show
how this additional freedom allows us to establish
the desired cancellation in
Proposition~\ref{prop:centered-marked-correlation}
by means of a minor and major arc analysis.
The strategy is similar to that of
\cite[Sections~5.3--5.6]{OpenAIGraph}.

For this, we first need to state the precise consequence that will be derived from the graph results of the next section. We use the interval banks
described after \eqref{eq:banks}; for sufficiently large $x$ all their
primes avoid $h$. Fix an active family as in Proposition~\ref{prop:centered-marked-correlation}. In this
section $\mathcal G_s,\mathcal G_b$ denote its selected subfamilies,
$\mathcal G=\mathcal G_s\sqcup\mathcal G_b$, and all bank indices
range over $\mathcal G$. Write $\mathcal P_g=P_g$ and recall that $\alpha_{\mathcal P}$ and $\beta_{\mathcal P}$ are invariant
under multiplication by $p^a$, for any $p\in\mathcal P$ and any
integer $a\ge0$.

For a vector $\boldsymbol t=(t_g)$ of nonnegative integers, a $\boldsymbol t$-list at $n$ consists of $t_g$ ordered distinct primes dividing $n$ from each $\mathcal{P}_g$. For a bounded function $b$ of these lists, define
\begin{equation}
 W_{\boldsymbol t}^{b}(n)=
 \left(\prod_g\one_{\{\omega_g(n)=t_g\}}V_g^{-t_g}\right)
 \sum_{\substack{\boldsymbol p\ \text{a }\boldsymbol t\text{-list at }n}}
 b(\boldsymbol p),\qquad W_{\boldsymbol t}:=W_{\boldsymbol t}^{1}.
 \label{eq:bridge-mark-weight}
\end{equation}
For $t_g=0$ the empty tuple is the unique list. Thus
$W_{\boldsymbol1}=\Omega_{\mathcal P}$, the weight in the proposition.
Moreover, for fixed $m$, $0\le t_g\le m+1$ and $|b|\le1$,
\begin{equation}
 |W_{\boldsymbol t}^{b}(n)|
 \le\prod_g t_g!V_g^{-t_g}\le L^{C_m}.
 \label{eq:bridge-mark-cap}
\end{equation}

The graph argument gives a relation between the desired correlation
and a family of comparison correlations, each of which has two
independent products of auxiliary big primes in its shifts. These
products will make its Fourier multiplier small away from rational
arcs, while the estimates of Section~\ref{sec:analytic-services}
will control those arcs. We now state the relation directly in the
arithmetic notation needed below. The following estimate will be deduced in
Section~\ref{sec:graph}
from the results of \cite{OpenAIGraph}.

\begin{proposition}[The graph comparison]\label{prop:graph-comparison}
Fix $A>0$ and
$\Psi\in C_c^\infty((0,\infty))$. There are a fixed integer $m\ge1$, a constant $C>0$, a partition
$\mathcal G_b=\mathcal G_{b,1}\sqcup\mathcal G_{b,2}$ and a
finite family of comparison terms $\mathcal C_\nu$ specified below, with coefficients
$c_\nu \in \mathbb{R}$, such that
\begin{equation}
 \mathcal C_0+\sum_\nu c_\nu\mathcal C_\nu\ll_A L^{-A},
 \qquad
 \#\{\nu\}+\sum_\nu|c_\nu|\le L^C,
 \label{eq:graph-comparison-relation}
\end{equation}
where
\[
 \mathcal C_0=\sum_{n\ge1}\frac{\Psi(n/x)}n
       \alpha_{\mathcal P}(n)\beta_{\mathcal P}(n+h)
       W_{\boldsymbol1}(n)W_{\boldsymbol1}(n+h).
\]
Each comparison is specified as follows. Choose integers
$0\le r_{\nu,g}\le m$, zero for $g\in\mathcal G_s$, with at
least one positive $r_{\nu,g}$ for each big block $\mathcal G_{b,i}$ and put
$t_{\nu,g}=1+r_{\nu,g}$. For each $g$, independently draw
$r_{\nu,g}$ auxiliary primes from the probability law
$\mu_g(p)=1/(V_gp)$. Write
$\boldsymbol d_1,\boldsymbol d_2$ for the resulting tuples of primes corresponding respectively to $\mathcal G_{b,1}$ and $\mathcal G_{b,2}$
and write $D_1,D_2$ for their products. Then
\begin{equation}
\begin{aligned}
 \mathcal C_\nu={}&\mathbb E_{\boldsymbol d_1,\boldsymbol d_2}
       a_\nu(\boldsymbol d_1)b_\nu(\boldsymbol d_2)
       \sum_{w\ge1}\frac{\Psi(w/(xD_1D_2))}{w}\\
 &\quad\times \alpha_{\mathcal P}(w)\beta_{\mathcal P}(w+hD_1D_2)
       W_{\boldsymbol t_\nu}^{u_\nu}(w)
       W_{\boldsymbol t_\nu}^{v_\nu}(w+hD_1D_2),
 \label{eq:graph-comparison-term}
\end{aligned}
\end{equation}
where $a_\nu,b_\nu,u_\nu,v_\nu$ are functions of modulus at most one that are fixed for each $\nu$, with $u_\nu,v_\nu$ depending only on the components of the ordered list indexed by the big banks $\mathcal G_b$. The constants and the construction are uniform
over every active family $\mathcal{P}$ with $|\mathcal{G}_b|,|\mathcal{G}_s| \ge c J$, for any fixed choice of $c>0$.
\end{proposition}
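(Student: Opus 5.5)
This relation will be obtained by specializing the weighted-dilation graph estimates of \cite{OpenAIGraph} to the vertex weights $W_{\boldsymbol t}$. The first step is to write $\mathcal C_0$ as a bilinear form on a graph. The vertices are integers $n\asymp x$ weighted by $W_{\boldsymbol1}(n)/n$. The edges join $n$ to $n+h$, and after dilation they join $w$ to $w+hD$, where $D$ ranges over products of primes drawn from the big banks.

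The dilation invariance of $\alpha_{\mathcal P},\beta_{\mathcal P}$ under $p\in\mathcal P$ gives exactly the averaging identity of the outline. Write $n=w/D$, with $D$ a product of primes dividing $w$, choosing $r_g$ primes from bank $g$. Then $\mathcal C_0$ equals an average over $\boldsymbol r$-lists of correlations of $\alpha_{\mathcal P}(w)\beta_{\mathcal P}(w+hD)$ at scale $xD$. In this average the weight $W_{\boldsymbol1}$ is replaced by $W_{\boldsymbol t}$ with $t_g=1+r_g$, because the extracted primes plus the original single prime give exactly $1+r_g$ distinct primes in the bank. Small banks are left untouched, so $r_g=0$ there. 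The normalization $V_g^{-t_g}$ in \eqref{eq:bridge-mark-weight} is what makes this identity exact rather than approximate.

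The second step is to invoke the high-trace operator bound of \cite{OpenAIGraph}. Let $\mathcal A$ be the averaging operator along these edges, in which the divisibility restriction $p\mid w$ ties the primes to $w$. Their estimate compares $\mathcal A^{2m}$, or an appropriate polynomial in it, with the "free" operator, in which the primes are drawn independently from $\mu_g(p)=1/(V_gp)$. This comparison holds after freeing only a bounded number $r_{\nu,g}\le m$ of primes from each bank, with an error of $O_A(L^{-A})$ once $m=m(A)$ is large. Expanding the difference between the constrained and free operators by inclusion–exclusion over which primes are freed produces the terms $c_\nu\mathcal C_\nu$. The number of patterns is at most $(m+2)^{|\mathcal G|}=L^{O(1)}$, since $|\mathcal G|\ll J$, and by \eqref{eq:bridge-mark-cap} each coefficient is $L^{O(1)}$. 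This gives the bound $\#\{\nu\}+\sum|c_\nu|\le L^C$.

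The bounded functions $a_\nu,b_\nu,u_\nu,v_\nu$ record the combinatorial data: which freed primes sit at which endpoint, together with the ordering and inclusion–exclusion signs. The bilinear structure $a_\nu(\boldsymbol d_1)b_\nu(\boldsymbol d_2)$ is obtained by fixing a partition $\mathcal G_b=\mathcal G_{b,1}\sqcup\mathcal G_{b,2}$ into two halves, each with $\gg J$ banks. We keep only those patterns in which both halves contain a freed prime; patterns freeing nothing in one half are absorbed into the error by applying the mixing bound to that half alone. The moment bound of Lemma~\ref{lem:marked-moments} controls every error term, through Cauchy–Schwarz against $|\alpha_{\mathcal P}|^2$ and $|\beta_{\mathcal P}|^2$, at a cost of $L^{C_2}$. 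This cost is absorbed by taking the saving in \cite{OpenAIGraph} large.

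The main obstacle is the third step: verifying that the hypotheses of \cite{OpenAIGraph} hold for our graph. There the sparse sets $\Omega$ are defined by having many prime divisors in each bank. Here they are instead defined by the exact counts $\omega_g=t_g$, with the constraint imposed at both endpoints $w$ and $w+hD$. One must check that their transference lemma still applies, that is, that averaged over $w$ and $D$, the exact-count conditions at $w+hD$ behave like the free random model. I would first try to prove this local-density comparison directly with Corollary~\ref{cor:henriot-record-weights}. Since the conditions involve only primes in the banks, which are at most $\exp(L^{2/5})$, they can be handled by sieve and CRT counting. The aim is to show the edge counts agree with the product of Haar probabilities up to a factor $1+O(L^{-A})$, uniformly over active families with $\ge cJ$ banks of each size. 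This uniformity is what makes the constants in the proposition independent of $\mathcal P$.
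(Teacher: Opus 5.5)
Your overall shape matches the paper: import the weighted-dilation estimates, split the big banks into two blocks, and free a bounded number of primes per bank. But the steps that actually turn the imported bound into \eqref{eq:graph-comparison-relation} are missing or misplaced.

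\textbf{Shared versus freed primes.} At the graph level the paper needs $M$ marked primes per bank, with $M$ large and chosen after $m$. Of these, $M-1$ are \emph{shared}: they divide both endpoints and form $D_R$. A pattern frees at most $m$ of them. The weights $W^{u_\nu}_{\boldsymbol t_\nu}(w)W^{v_\nu}_{\boldsymbol t_\nu}(w+hD_1D_2)$, with $t_{\nu,g}=1+|C_g|$ counting the \emph{unshared} primes at each endpoint, only appear after two further steps. First write $n=D_Rw$ and $n+hD=D_R(w+hD_C)$. Then show that the shared-prime probabilities $\prod\mu_g(p_{g,j})$, summed over the choices that reconstruct an exact-count edge, equal $\prod_g\one_{\{\omega_g(w)=\omega_g(w+hD_C)=t_g\}}$ up to $O(L^{O_M(1)}e^{-L^{1/8}})$; the auxiliary-avoidance constraint is then removed. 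Your Step 1 has the extracted primes dividing both $w$ and $w+hD$, which is the raw, shared configuration. It assigns $t_g=1+r_g$ to those primes, conflating shared with freed primes. The identity is also not exact, because of distinctness and collisions.

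\textbf{The harmonic weight.} You never say how the edge-dependent weight $\Psi(n/(xD))/n$ enters an operator bound. The paper builds the twist $D^{i\zeta}$ into the edge weights and uses Fourier inversion in $\zeta$. It applies the pairing bound for $|\zeta|\le L$ and the absolute row and column bounds for $|\zeta|>L$.

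\textbf{The coefficients.} The functions $a_\nu,b_\nu,u_\nu,v_\nu$ and the real $c_\nu$ are not inclusion--exclusion bookkeeping. They come from separating the specific kernels $-(-1)^{|A|+|B|}\mathcal K_A(D_A,X_A)\mathcal K_B(D_B,X_B)$, built from log-cells of mass $\ge\xi$ and characters of conductor $\le Q$, as in Lemma~\ref{lem:separated-comparison-coefficients}. The mass bound comes from that lemma, not from \eqref{eq:bridge-mark-cap}, which bounds vertex weights. The imported input is that the signed ideal operator of this particular family is small, not that a ``constrained'' operator is close to a ``free'' one.

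\textbf{Exact counts.} The step you single out as the main obstacle is misdirected, and as proposed it would fail. Corollary~\ref{cor:henriot-record-weights} is an upper bound up to an unspecified constant, so it cannot give edge counts matching Haar probabilities to within $1+O(L^{-A})$. In fact no new transference is needed:
\begin{itemize}
\item Work on the represented graph, whose vertices carry \emph{at least} $M$ marked primes per bank with damping $\delta_M$; the imported pairing bound holds there as is.
\item The exact-count operator is the compression $P_MA^{\rm rep}_{h,\zeta}P_M$, and $\delta_M=1$ on complete lists.
\item For $f,g$ supported on $\Omega_M$ one has $\langle f,P_MA^{\rm rep}_{h,\zeta}P_Mg\rangle=\langle f,A^{\rm rep}_{h,\zeta}g\rangle$.
\end{itemize}
The one remaining hypothesis, $|f_n|\le\exp(C\sqrt L)$, fails for $\alpha_{\mathcal P}$. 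The paper clips $f$ at $\exp(\sqrt L)$ and controls the tail with the $\ell^4$ bound of Lemma~\ref{lem:marked-moments} and Schur's test.

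Without the compression and clipping, the $\zeta$-twist and the shared-label removal, your sketch does not yet yield the stated relation.
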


\subsection{Two free products and the Fourier multiplier}

We shall show that each comparison in
\eqref{eq:graph-comparison-term} is smaller than every fixed negative
power of $L$. The relation \eqref{eq:graph-comparison-relation} will
then give the desired bound for $\mathcal C_0$.
We begin by applying Fourier inversion.
A bilinear estimate for the two independent auxiliary products
then shows that the resulting Fourier integral is negligible away
from neighborhoods of rationals with small denominators.
This reduces the problem to a major-arc analysis, which
will be carried out in the subsequent subsections.

Each auxiliary product contains between one and $O_m(J)$ primes
from the big banks $\mathcal G_b$. We therefore use dyadic scale
parameters $H_1,H_2$ satisfying
\begin{equation}
\begin{aligned}
 &\exp(L^{1/3}-O(1))\le H_i\le\exp(O_m(JL^{2/5}))
       \qquad(i=1,2),\\
 &H'=H_1H_2=x^{o(1)},\qquad Y=xH'=x^{1+o(1)}.
\end{aligned}
 \label{eq:bridge-scales}
\end{equation}
Fix a comparison and let $\mathcal C_\nu(H_1,H_2)$ be its contribution
from $H_i\le D_i<2H_i$ for $i=1,2$. There are only $L^{O(1)}$
pairs of scales to sum. The result will follow from a saving of any prescribed power of $L$, uniformly in each of them.

\paragraph{Separating the smooth weight.}
Given $H_1,H_2$, we thus restrict attention to $D_1D_2\asymp H'$. On this range, the support of $\Psi$ forces both $w$ and $w+hD_1D_2$ to have size comparable to $Y$. We wish to Fourier-expand $\Psi$, while preserving this localization. With this in mind, choose a fixed $\chi\in C_c^\infty((0,\infty))$ equal to one on a fixed compact interval containing all possible values of $w/Y$ and $(w+hD_1D_2)/Y$ on the support of $\Psi$. Then inserting $
\chi(w/Y)\chi((w+hD_1D_2)/Y)
$
leaves the relevant terms in $\mathcal C_\nu$ unchanged.
Put $\varphi(z)=\Psi(e^z)$ and
$\widehat\varphi(\sigma)=\int_{\mathbb R}\varphi(z)e^{-i\sigma z}\,dz$.
We have
\[
 \frac{\Psi(w/(xD_1D_2))}{w}
 =\frac1{2\pi Y}\int_{\mathbb R}\widehat\varphi(\sigma)
     (w/Y)^{-1+i\sigma}
     (D_1/H_1)^{-i\sigma}(D_2/H_2)^{-i\sigma}\,d\sigma.
\]
The functions $a_\nu(\boldsymbol d_1)$, $b_\nu(\boldsymbol d_2)$ currently depend on the lists $\boldsymbol d_i$ but we may simplify the analysis by averaging over all choices giving the same product $D_i$, so we define
\[
\begin{aligned}
 b_{1,\sigma}(d)&=\one_{\{H_1\le d<2H_1\}}
       (d/H_1)^{-i\sigma}
       \mathbb E\bigl(a_\nu(\boldsymbol d_1)\mid D_1=d\bigr),\\
 b_{2,\sigma}(d)&=\one_{\{H_2\le d<2H_2\}}
       (d/H_2)^{-i\sigma}
       \mathbb E\bigl(b_\nu(\boldsymbol d_2)\mid D_2=d\bigr).
\end{aligned}
\]
Both have modulus at most one. For simplicity of notation, let us define $\boldsymbol t=\boldsymbol t_\nu$, $b=u_\nu$ and
\begin{equation}
\begin{aligned}
 F_\sigma(w)&=\psi(w/Y)w^{i\sigma}
                  \alpha_{\mathcal P}(w)W_{\boldsymbol t}^{b}(w),
       &\psi(z)&=\chi(z)/z,\\
 G(v)&=\chi(v/Y)\beta_{\mathcal P}(v)
                         W_{\boldsymbol t}^{v_\nu}(v).
\end{aligned}
 \label{eq:bridge-U}
\end{equation}
We thus have
\[
 \mathcal C_\nu(H_1,H_2)=\frac1{2\pi Y}
       \int_{\mathbb R}\widehat\varphi(\sigma)Y^{-i\sigma}
                                      \mathcal I_\sigma\,d\sigma,
\]
where
\begin{equation}
 \mathcal I_\sigma=
 \mathbb E_{D_1,D_2}b_{1,\sigma}(D_1)b_{2,\sigma}(D_2)
       \sum_w F_\sigma(w)G(w+hD_1D_2).
 \label{eq:bridge-bilinear-correlation}
\end{equation}
Divisor moments and
\eqref{eq:bridge-mark-cap} give, uniformly in $\sigma$,
\begin{equation}
 \sum_w|F_\sigma(w)|^2+\sum_w|G(w)|^2\ll YL^{C_1}.
 \label{eq:bridge-energy}
\end{equation}
Consequently $|\mathcal I_\sigma|\ll YL^{C_1}$ by Cauchy--Schwarz. By the rapid decay of $\widehat\varphi$, the contribution of the region $|\sigma|>L$ to $\mathcal C_\nu(H_1,H_2)$ is $O_A(L^{-A})$ for every fixed $A>0$, uniformly in $H_1,H_2$. It therefore suffices to bound $\mathcal I_\sigma/Y$ uniformly for $|\sigma|\le L$. For the rest of the proof, fix such a $\sigma$ and write $F=F_\sigma$ and $b_i=b_{i,\sigma}$.

\paragraph{Localizing the Fourier multiplier.}
Writing $\widehat F(\theta)=\sum_nF(n)e(\theta n)$ and similarly for $G$, additive Fourier
inversion gives
\[
 \mathcal I_\sigma=\int_0^1
       \widehat F(\theta)\widehat G(-\theta)m(h\theta)\,d\theta,
\]
with
\[
 m(\theta)=\mathbb E b_1(D_1)b_2(D_2)e(\theta D_1D_2).
\]
The proof will proceed by controlling the local
energy of $\widehat F$ over the major arcs and
 bounding $m(h \theta)$ on the minor arcs. The following elementary product estimate gives the required minor-arc bound.

\begin{lemma}[{see~\cite[Lemma~4.2]{OpenAIGraph}}]
\label{lem:auxiliary-product}
Let $m \ge 1$ be a fixed integer and consider a collection of $O(J)$ disjoint prime banks. For the purposes of this lemma, these could be any disjoint sets of primes with $V_g\asymp1$. Let $D_1,D_2$ be independent products formed from
at most $m$ independent draws of law $\mu_g(p)=1/(V_gp)$ in each bank.
Here the numbers of draws are fixed before sampling but they may depend on the bank and be different for $D_1$ and $D_2$. Then, there exists a constant $C$, depending only on $m$ and on the implicit constants in the $O(J)$, $V_g\asymp1$ bounds above, such that
\begin{equation}
 \mathbb P(D_i=n)\le L^C/n.\label{eq:bridge-product-atom}
\end{equation}
Furthermore, if $b_i$ are functions bounded by one and supported on $[H_i,2H_i)$ for some $H_i\ge1$, then, for $(a,r)=1$ and
$|\theta-a/r|\le r^{-2}$, we have
\begin{equation}
 \left|\mathbb E b_1(D_1)b_2(D_2)e(\theta D_1D_2)\right|^2
 \ll L^C\left(\frac1r+\frac1{H_1}
       +\frac{\log(2r)}{H_2}
       +\frac{r\log(2r)}{H_1H_2}\right).
 \label{eq:bridge-multiplier-bound}
\end{equation}
\end{lemma}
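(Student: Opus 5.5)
We prove the atom bound \eqref{eq:bridge-product-atom} first and then the bilinear bound \eqref{eq:bridge-multiplier-bound} by Cauchy--Schwarz in $D_1$. For the atom bound, fix $n$. Since the banks are disjoint, the event $D_i=n$ forces the multiset of primes drawn from each bank $g$ to equal the multiset of prime divisors of $n$ lying in $P_g$, counted with multiplicity. If $r_g\le m$ draws are made in bank $g$, the probability of producing a given multiset $\{p_1,\dots,p_{r_g}\}$ is at most $r_g!\prod_j (V_g p_j)^{-1}$. Multiplying over the $O(J)$ banks gives
\[
\mathbb P(D_i=n)\le \frac1n\prod_g m!\,V_g^{-m}\le \frac{(C_0)^{O(J)}}{n}=\frac{L^{C}}{n},
\]
because $V_g\asymp1$ and $J=\log L$.

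For the multiplier, apply Cauchy--Schwarz over $D_1$, using $|b_1|\le1$:
\[
|m(\theta)|^2\le \mathbb E_{D_1}\one_{[H_1,2H_1)}(D_1)\Bigl|\mathbb E_{D_2}b_2(D_2)e(\theta D_1D_2)\Bigr|^2
=\mathbb E_{D_2,D_2'}b_2(D_2)\overline{b_2(D_2')}\,\mathbb E_{D_1}\one_{[H_1,2H_1)}(D_1)\,e\bigl(\theta D_1(D_2-D_2')\bigr).
\]
The atom bound lets us dominate the law of $D_1$ restricted to $[H_1,2H_1)$ by $L^C/n$ on that interval, and by roughly $L^C/H_1$ per integer. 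The obstacle is that the inner sum over $D_1$ is not a plain exponential sum, because the $D_1$-law has no positivity-free structure. We avoid this by applying Cauchy--Schwarz the other way, over $D_2$, so that the squared variable is the one we sum over integers, and then bound the law of $D_2$ pointwise. Bounding the law of $D_1$ by $L^C/H_1$ on the interval and extending to all integers in $[H_1,2H_1)$ gives
\[
|m(\theta)|^2\ll \frac{L^{C}}{H_1}\,\mathbb E_{D_2,D_2'}\min\Bigl(H_1,\ \|\theta(D_2-D_2')\|^{-1}\Bigr).
\]
Again using $\mathbb P(D_2=d)\le L^C/d\ll L^C/H_2$ on $[H_2,2H_2)$, we replace $\mathbb E_{D_2,D_2'}$ by $L^{2C}H_2^{-2}\sum_{d,d'\in[H_2,2H_2)}$. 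The diagonal-type terms with $d-d'=\ell$ appear with multiplicity at most $H_2$, so the bound becomes
\[
\frac{L^{C}}{H_1H_2}\sum_{|\ell|\le H_2}\min\Bigl(H_1,\ \|\theta\ell\|^{-1}\Bigr).
\]

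The last step is the classical Vinogradov estimate
\[
\sum_{|\ell|\le U}\min\bigl(V,\|\theta \ell\|^{-1}\bigr)\ll\Bigl(\frac Ur+1\Bigr)(V+r\log 2r)
\]
for $|\theta-a/r|\le r^{-2}$. With $U=H_2$ and $V=H_1$ this gives
\[
\frac{L^C}{H_1H_2}\Bigl(\frac{H_1H_2}{r}+H_2\log 2r+H_1+r\log 2r\Bigr),
\]
which is exactly the right-hand side of \eqref{eq:bridge-multiplier-bound}. The main point requiring care is the domination of the discrete laws by $L^C/H_i$ uniformly in the number of banks. This is where $O(J)$ banks with $V_g\asymp1$ and at most $m$ draws per bank are used, since these make the factor $\prod_g m!\,V_g^{-m}$ only a power of $L$. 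The conditional averages $b_i$ never need any structure beyond being bounded by one.
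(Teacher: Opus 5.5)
Your route is the paper's: the atom bound by counting ordered draw sequences with a given product, then Cauchy--Schwarz, geometric sums, and the classical bound for $\sum\min(V,\|\theta\ell\|^{-1})$, which the paper proves by the block-separation argument rather than citing. However, as written your last step proves the mirror image of the stated inequality, not the inequality itself. With $U=H_2$ and $V=H_1$ you get
\[
 \frac{1}{H_1H_2}\Bigl(\frac{H_1H_2}{r}+H_2\log 2r+H_1+r\log 2r\Bigr)
 =\frac1r+\frac{\log 2r}{H_1}+\frac1{H_2}+\frac{r\log 2r}{H_1H_2}.
\]
Compared with \eqref{eq:bridge-multiplier-bound}, the roles of $H_1$ and $H_2$ in the middle terms are interchanged, so it is not ``exactly'' the right-hand side. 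The lemma places no restriction on $r$. So when $H_1$ is much smaller than $H_2$ and $\log(2r)$ exceeds every fixed power of $L$, the factor $\log(2r)$ in $\log(2r)/H_1$ cannot be absorbed into $L^C$, and your bound does not imply the stated one.

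The cause is that your displays square out $D_1$: the geometric sums run over $n\in[H_1,2H_1)$, giving $\min(H_1,\|\theta(d-d')\|^{-1})$ with $|d-d'|<H_2$. Your prose, by contrast, announces Cauchy--Schwarz over $D_2$. Doing it over $D_2$, as the paper does (``Cauchy--Schwarz in $t$''), the geometric sums run over $t\in[H_2,2H_2)$ and the differences satisfy $|\ell|<H_1$. The classical estimate with $U=H_1$, $V=H_2$ then gives $(1+H_1/r)(H_2+r\log 2r)/(H_1H_2)$, which is exactly the stated bound. Since the hypotheses on $(D_1,b_1,H_1)$ and $(D_2,b_2,H_2)$ are symmetric, the fix is a relabelling of your argument. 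For the paper's own application, where $\log 2r\ll L$ and both $H_i\ge\exp(L^{1/3}-O(1))$, either version would suffice.

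Separately, your paragraph about the ``obstacle'' is garbled. The actual justification for replacing $\mathbb P(D_1=n)$ by $L^C/H_1$ and extending to all integers in $[H_1,2H_1)$ is positivity. This must be done before expanding the square, while the summand $\bigl|\mathbb E_{D_2}b_2(D_2)e(\theta nD_2)\bigr|^2$ is nonnegative; after expanding into $e(\theta n(d-d'))$ it is no longer allowed. Your displayed inequality is consistent with the correct order, but the text should say this.

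Two minor points:
\begin{itemize}
\item In the atom bound, $\prod_g m!\,V_g^{-m}$ should be $\prod_g m!\max(1,V_g^{-1})^m$, since some $V_g$ may exceed $1$; this is harmless.
\item The paper packages the argument as a bilinear inequality in $\|a\|_2^2\|b\|_2^2$, with $\|a\|_2^2\le\max_n\mathbb P(D_1=n)\le L^C/H_1$. This avoids your second pointwise bound on the law of $D_2$, which costs $L^{3C}$ instead of $L^{2C}$; the difference is immaterial.
\end{itemize}
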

\begin{proof}
Fix one product and write $k_g\le m$ for its number of draws
in bank $g$.
Unique factorization gives at most $\prod_g k_g!$ ordered
representations of a product $n$, each of probability
$n^{-1}\prod_gV_g^{-k_g}$. Since there are $O(J)$ banks, this
proves \eqref{eq:bridge-product-atom}.

For the second estimate, notice that if $a_n,b_t$ are complex coefficients supported on $[H_1,2H_1)$ and $[H_2,2H_2)$,
respectively, we have
\begin{equation}
 \left|\sum_{n,t}a_nb_te(\theta nt)\right|^2
 \ll\|a\|_2^2\|b\|_2^2
             (1+H_1/r)(H_2+r\log(2r)).
 \label{eq:integer-bilinear}
\end{equation}
Indeed, Cauchy--Schwarz in $t$, expansion of the square and the standard geometric-sum estimate bound
the left side by
\[
 \|a\|_2^2\|b\|_2^2
 \left(H_2+\sum_{1\le j\ll H_1}
                    \min(H_2,\|\theta j\|^{-1})\right).
\]
In a block of $j$-diameter less than $r/2$, the
rational parts $aj/r$ are separated modulo one by at least $1/r$,
whereas the perturbations $(\theta-a/r)j$ differ by at most
$1/(2r)$ within the block. Thus the
points $\theta j$ are separated by at least $1/(2r)$.
The sum of the minimum over one block is therefore
$O(H_2+r\log(2r))$. There are $O(1+H_1/r)$ blocks, proving
\eqref{eq:integer-bilinear}.

Finally put $a_n=\mathbb P(D_1=n)b_1(n)$ and
$b_t=\mathbb P(D_2=t)b_2(t)$. The support of $a_n, b_t$, (\ref{eq:bridge-product-atom}) and the fact that
both probability laws have mass one give
$\|a\|_2^2\ll L^C/H_1$ and $\|b\|_2^2\ll L^C/H_2$.
Substituting in \eqref{eq:integer-bilinear} and enlarging $C$ proves
\eqref{eq:bridge-multiplier-bound}.
\end{proof}

Let us apply this to our $m(\theta)$. For a prescribed $A_0>0$, choose a sufficiently large fixed $B_0$
and put $H=H'/L^{B_0}$. Dirichlet approximation supplies
\[
 1\le r\le Q:=\lfloor H'/L^{B_0-1}\rfloor,\qquad (a,r)=1,
 \qquad |\theta-a/r|\le(rQ)^{-1}\le r^{-2}.
\]
If $r\le L^{B_0}$, this places $\theta$ within $H^{-1}$ of a rational
with denominator at most $L^{B_0}$. Otherwise
\eqref{eq:bridge-multiplier-bound} gives
\[
 |m(\theta)|^2\ll L^C
       \bigl(L^{-B_0}+H_1^{-1}+L/H_2+L^{2-B_0}\bigr)
       \ll L^{-2A_0},
\]
by our choice of $B_0$. The bounds in \eqref{eq:bridge-scales}
make the middle two terms smaller than every fixed negative power
of $L$.

For $m(h\theta)$, the exceptional arcs are therefore contained in
\[
 \mathfrak M=
 \bigcup_{\substack{1\le q\le |h|L^{B_0}\\0\le a<q,\ (a,q)=1}}
 \left\{\theta\in\mathbb R/\mathbb Z:
                \|\theta-a/q\|\le C_h/H\right\}.
\]
Here $C_h$ is a fixed constant depending on $h$. Thus, there are
$O_h(L^{2B_0})$ arcs and $|m(h\theta)|\ll L^{-A_0}$ on their
complement. Every prime dividing a denominator $q$ belongs to
$\mathcal S$ for sufficiently large $x$.

\paragraph{The remaining local estimate.}
Parseval, Cauchy--Schwarz and \eqref{eq:bridge-energy} combined with our minor arc bound give
\[
 \frac1Y\left|\int_{[0,1)\setminus\mathfrak M}
       \widehat F(\theta)\widehat G(-\theta)m(h\theta)\,d\theta\right|
       \ll L^{C_1-A_0}.
\]
On an individual arc $I_{a/q}$, we instead use:
\[
\begin{aligned}
 \frac1Y\int_{I_{a/q}}
       |\widehat F(\theta)\widehat G(-\theta)m(h\theta)|\,d\theta
 &\le\frac1Y\left(\int_{I_{a/q}}|\widehat F(\theta)|^2d\theta\right)^{1/2}
                \left(\sum_w|G(w)|^2\right)^{1/2}\\
 &\ll L^{C_1/2}\left(\frac1Y\int_{|\beta|\le C_h/H}
       |\widehat F(a/q+\beta)|^2d\beta\right)^{1/2}.
\end{aligned}
\]
Thus the complement is negligible when $A_0$ is large. Summing over
the $O_h(L^{2B_0})$ arcs, it suffices to prove, for every required
fixed $N$ and every choice of $a,q$ in our range,
\begin{equation}
 \int_{|\beta|\le C_h/H}
 |\widehat F(a/q+\beta)|^2\,d\beta\ll_N YL^{-N}.
 \label{eq:bridge-local-energy-target}
\end{equation}
We choose $N$ after $B_0$, so that this saving pays for the number
of arcs and the additional logarithmic factors. This is the remaining goal of the section.

\subsection{Passing to Dirichlet polynomials}

We shall use the following standard estimate to pass from additive to multiplicative phases.

\begin{lemma}
\label{lem:local-mellin-energy}
Let $C>0$. If $a_n$ is supported on a fixed range $n\asymp Y$,
$Y=x^{1+o(1)}$, and $H=x^{o(1)}\to\infty$, then
\begin{equation}
 \int_{|\beta|\le CH^{-1}}\left|\sum_na_ne(\beta n)\right|^2d\beta
 \ll_C \frac1Y\int_{\mathbb R}(1+H|t|/Y)^{-2}
           \left|\sum_na_nn^{it}\right|^2dt.
 \label{eq:bridge-local-mellin}
\end{equation}
\end{lemma}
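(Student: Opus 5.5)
This is the standard Gallagher-type inequality, and the plan is to prove it by majorizing the short interval by a smooth kernel and then passing to multiplicative characters through Mellin inversion. Set $f(\beta)=\sum_n a_n e(\beta n)$. We may insert a fixed smooth bump $\omega$ that is supported on a compact subinterval of $(0,\infty)$ and equals one on the range $n\asymp Y$; this does not change $a_n$. Choose a nonnegative even Schwartz weight $\Theta$ with $\Theta\ge 1$ on $[-C,C]$ and $\widehat\Theta$ compactly supported. Then the left side of \eqref{eq:bridge-local-mellin} is at most
\[
\int_{\mathbb R}\Theta(H\beta)|f(\beta)|^2\,d\beta
=\frac1H\sum_{n,n'}a_n\overline{a_{n'}}\,\widehat\Theta\Bigl(\frac{n-n'}{H}\Bigr),
\]
with $\widehat\Theta\ge0$ as well, which we can arrange by taking $\Theta=|\rho|^2$ for a suitable $\rho$.

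Next we rewrite the kernel multiplicatively. Since $n,n'\asymp Y$ and $H=x^{o(1)}$, we have $(n-n')/H=(Y/H)\log(n/n')\cdot(1+O(|n-n'|/Y))$ on the relevant range. To make this exact, replace the kernel $\widehat\Theta((n-n')/H)$ by $K(\log n-\log n')$, where
\[
K(u)=\widehat\Theta\bigl(Yu/H\bigr)\,\kappa(u),
\]
with $\kappa$ a smooth cutoff near $u=0$. Then expand $K$ by Fourier inversion as $K(u)=\int \widehat K(t)e^{itu}\,dt$. This produces
\[
\int \widehat K(t)\Bigl|\sum_n a_n n^{it}\Bigr|^2\,dt,
\]
where $\widehat K(t)\ll (H/Y)(1+H|t|/Y)^{-2}$ by repeated integration by parts. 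Multiplying by the prefactor $1/H$ gives exactly the right side of \eqref{eq:bridge-local-mellin}.

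The main obstacle is that the substitution of the logarithmic kernel for the linear one is not an identity: the two kernels differ by relative errors of size $O(H^{-1}|n-n'|^2/Y)$ at scale $|n-n'|\asymp H$, and positivity is needed to compare them. We will therefore avoid a direct comparison and argue as follows. Cover $[-C/H,C/H]$ by the trivial bound and note that $|e(\beta n)-(n/Y)^{2\pi i\beta Y}|$ is not small, so a pointwise comparison fails. The cleaner approach, which we would adopt, is Gallagher's lemma. First,
\[
\int_{|\beta|\le C/H}|f(\beta)|^2\,d\beta\ll_C H^{-2}\int_{\mathbb R}\Bigl|\sum_{v<n\le v+H}a_n\Bigr|^2dv.
\]
Second, we would prove its multiplicative analogue,
\[
\int\Bigl|\sum_{v<n\le v(1+\delta)}a_n\Bigr|^2\frac{dv}{v}\ll \delta^2\int_{|t|\ll 1/\delta}\Bigl|\sum a_n n^{it}\Bigr|^2dt,
\]
in its smoothed form with weight $(1+\delta|t|)^{-2}$, and apply it with $\delta=H/Y$. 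This second inequality follows from Plancherel in the variable $\log v$: the Fourier transform of $\mathbf 1_{[0,\log(1+\delta)]}$ is bounded by $\min(\delta,1/|t|)\ll \delta(1+\delta|t|)^{-1}$. Finally, since $v\asymp Y$ on the support, the intervals $(v,v+H]$ and $(v,v(1+H/v)]$ coincide, and the measure $dv/v\asymp dv/Y$. Assembling these gives $H^{-2}\cdot Y\cdot (H/Y)^2\cdot Y^{-1}\cdots$; we then check that the powers combine to the stated $1/Y$ prefactor.
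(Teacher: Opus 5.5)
Your overall route is the paper's: Gallagher's lemma, then a bound for short-interval sums by a weighted mean square of $A(it)=\sum_n a_n n^{it}$. The second step, however, does not go through as written.

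You prove the multiplicative inequality by Plancherel in $\log v$. That requires windows $(v,v(1+\delta)]$ of \emph{fixed} logarithmic width, and you apply it with $\delta=H/Y$. Gallagher, however, produces $(v,v+H]$, whose multiplicative width $H/v$ varies by a constant factor over $v\asymp Y$. The identity $(v,v+H]=(v,v(1+H/v)]$ does not help. With $\delta=H/v$ the kernel is no longer a convolution in $\log v$. With $\delta=H/Y$ the two windows differ by $(v+H,v+vH/Y]$, of length $H(v/Y-1)$, which is comparable to $H$.

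Squared interval sums cannot be compared pointwise, and the trivial bound $\ll H^2\sum_n|a_n|^2$ for the discrepancy is not dominated by the right-hand side. For $a_n=(-1)^n$ the right-hand side of your intermediate inequality is $\asymp Y$, against $H^2Y$. This additive-to-multiplicative comparison is exactly what the short-interval lemma from Soundararajan's Bourbaki notes supplies, and the paper invokes it. So the one nontrivial step is missing from your proposal.

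Your abandoned smooth-kernel heuristic breaks for the same reason. One has $n-n'=n\log(n/n')\{1+O(H/Y)\}$, and the factor $n/Y$ is not $1+o(1)$ on $n\asymp Y$, so the mismatch is first order, not of relative size $H^{-1}|n-n'|^2/Y$. Separately, windows of length exactly $H$ only control $|\beta|\le 1/(2H)$; for general $C$ take length $\eta\asymp H/C$, as the paper does with $\eta=H/(4C)$.

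To repair it, either cite that lemma or use a smooth window and do the $v$-integral first.
\begin{itemize}
\item Majorize $\mathbf 1_{[-C,C]}(H\beta)$ by a constant times $|\widehat\rho(H\beta)|^2$, for a nonnegative bump $\rho$ supported in $[-1/(8C),1/(8C)]$. Plancherel then bounds the left side by $\ll H^{-2}\int|\sum_n a_n\rho((n-v)/H)|^2\,dv$.
\item Write $\rho((n-v)/H)=\phi_v(\log(n/v))$ with $\phi_v(u)=\rho(\tfrac vH(e^u-1))$. Fourier inversion gives $\sum_n a_n\rho((n-v)/H)=\frac1{2\pi}\int\widehat\phi_v(t)v^{-it}A(it)\,dt$. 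For $v\asymp Y$, $\widehat\phi_v(t)$ and its first two $\log v$-derivatives are $\ll_N (H/Y)(1+H|t|/Y)^{-N}$.
\item Expand the square and integrate by parts twice in $\log v$. This gives a kernel $\ll (H^2/Y)(1+H|t_1|/Y)^{-N}(1+H|t_2|/Y)^{-N}(1+|t_1-t_2|)^{-2}$.
\item Finally, $|A(it_1)A(it_2)|\le\frac12(|A(it_1)|^2+|A(it_2)|^2)$ yields the lemma with no loss, even with weight $(1+H|t|/Y)^{-N}$.
\end{itemize}
The $v$-dependence of the width $H/v$ is harmless once you integrate over $v$ before pairing $t_1$ with $t_2$. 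It cannot be removed by matching kernels.
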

\begin{proof}
Put $\eta=H/(4C)$. Plancherel, applied to the interval kernel of
length $\eta$, bounds the left side by
\[
 \ll \eta^{-2}\int_0^\infty
       \left|\sum_{v<n\le v+\eta}a_n\right|^2dv.
\]
By \cite[Lemma~2.2]{SoundararajanBourbaki}, this is in turn bounded by
\[
 \ll \eta^{-2}Y\int_{\mathbb R}
  \min\{\eta^2/Y^2,|t|^{-2}\}
       \left|\sum_na_nn^{it}\right|^2dt,
\]
which proves the claim. 
\end{proof}

We shall also use the standard mean-value theorem for Dirichlet
polynomials \cite[Corollary~3]{MontgomeryVaughanHilbert}:
uniformly over intervals $I$ of length $T$ and coefficients supported on $n=O(R)$,
\begin{equation}
 \int_I\left|\sum_{n}a_nn^{it}\right|^2dt
       \ll(T+R)\sum_n|a_n|^2.
 \label{eq:bridge-elementary-mean-square}
\end{equation}
Apply Lemma~\ref{lem:local-mellin-energy} with
$a_n=F(n)e(an/q)$. Choose a sufficiently large fixed $B_1$ and put
\begin{equation}
 T_0=(Y/H)L^{B_1}=xL^{B_0+B_1},
 \label{eq:bridge-T0}
\end{equation}
with $B_0$ coming from our definition of the major arcs. By \eqref{eq:bridge-energy} and
\eqref{eq:bridge-elementary-mean-square},
\[
Y^{-2}\int_{T\le|t|\le2T}
 \left|\sum_nF(n)e(an/q)n^{it}\right|^2dt
 \ll L^{C_1}(1+T/Y).
\]
On the successive dyadic intervals beyond $T_0$, summing this
estimate with the corresponding weights gives
\[
\frac1{Y^2}\int_{|t|>T_0}(1+H|t|/Y)^{-2}
 \left|\sum_nF(n)e(an/q)n^{it}\right|^2dt
 \ll L^{C_1}\left(\frac{1}{L^{2B_1}}+\frac1{HL^{B_1}}\right).
\]
Choosing $B_1$
sufficiently large makes this smaller than any prescribed negative power of $L$.
It thus only remains to establish the bound
\begin{equation}
 \int_{|t|\le T_0}
 \left|Y^{-1}\sum_nF(n)e(an/q)n^{it}\right|^2dt\ll_N L^{-N}.
 \label{eq:bridge-Mellin-target}
\end{equation}

\subsection{The low frequencies}

Write
\[
 \mathcal T(t)=Y^{-1}\sum_n F(n)e(an/q)n^{it},
\]
and let $B>0$ be a fixed large constant to be specified in the next subsection. We will first control the range $|t|\le T_c$ with $T_c=L^B$, where we shall exploit the fact that $\alpha_{\mathcal{P}}$ has been centered by construction. Indeed, expanding the definitions of $F$ and $\alpha_{\mathcal{P}}$ given respectively in \eqref{eq:bridge-U} and \eqref{eq:record-centered-core} and writing $A=n_{\mathcal S}$, $v=n/A$, we see that
\begin{equation}
\begin{aligned}
 \mathcal T(t)
  =\frac1Y\sum_{A=A_{\mathcal S}}H(A)
    \sum_{(v,\mathcal S)=1}
       e(aAv/q)\psi(Av/Y)(Av)^{i(t+\sigma)}
       \left\{d_k(v)-\frac{M_{k,\mathcal S}(v)}V\right\},\\
 H(A)=d_k(A_{\mathcal S\setminus\mathcal P})
                                      W_{\boldsymbol t}^{b}(A),
 \qquad |H(A)|\le L^C d_k(A).
\end{aligned}
 \label{eq:bridge-low-record-form}
\end{equation}
Notice we are using the fact that $W_{\boldsymbol t}^{b}(n)$ depends only on $n_{\mathcal S}$.  Furthermore, $q$ is $\mathcal S$-smooth. We may thus apply Lemma~\ref{lem:combined-mellin} with $R=Y$, $j=K=k$,
$b_A(v)=e(aAv/q)$ and $\psi_A=\psi$. Since $|H(A)|\le L^Cd_k(A)$
and $|\sigma|\le L^{O(1)}$, the triangle inequality gives,
uniformly for $|t|\le T_c$,
\[
 |\mathcal T(t)|
   \le\frac{L^C}{Y}\sum_{A_{\mathcal S}=A}
                      d_k(A)|S_A(t+\sigma)|
   \ll L^{C_B}\exp(-c_BL^{1/10}).
\]
Hence, for every fixed $N$,
\begin{equation}
 \int_{|t|\le T_c}|\mathcal T(t)|^2\,dt\ll_N L^{-N},
 \label{eq:bridge-low-times}
\end{equation}
where the constants in this estimate may depend on the subsequently fixed
$B$.

\subsection{Extracting a prime sum at high frequencies}

For $T_c<|t|\le T_0$, we extract the unique prime
divisor from a small bank and use cancellation in
the resulting prime sum, while controlling the
remaining factor by a mean-square estimate.
This is in the spirit of \cite{MR}
(see also \cite[Section~3]{SoundararajanBourbaki}), although
here the reduction in Section~\ref{sec:reconstruction}
allows us to restrict to exactly one distinct prime
divisor in each active small bank. This makes the
prime-factor decomposition immediate apart from
the prime-square exceptions treated below.

Fix a small bank $g_s \in \mathcal{G}_s$. Its list in $\boldsymbol t$ has length one and the
function $b$ does not depend on the primes in this bank. Except when a prime of this
group divides $n$ to the second power, one easily checks from the definitions that
\begin{equation}
 W_{\boldsymbol t}^{b}(n)=\frac1{V_{g_s}}
    \sum_{\substack{p\mid n\\p\in\mathcal P_{g_s}}}
       W_{\boldsymbol t-\boldsymbol e_{g_s}}^{b}(n/p).
 \label{eq:bridge-owner-identity}
\end{equation}
Since
$\alpha_{\mathcal P}(pm)=\alpha_{\mathcal P}(m)$ for every $p \in \mathcal{P}$,
putting
\[
 \gamma(m)=\alpha_{\mathcal P}(m)
                 W_{\boldsymbol t-\boldsymbol e_{g_s}}^b(m)
\]
gives the explicit expression
\[
 \mathcal T(t)=\frac1{YV_{g_s}}
   \sum_{p\in\mathcal P_{g_s}}\sum_{m\ge1}
       \gamma(m)\psi(pm/Y)e(apm/q)(pm)^{i(t+\sigma)}
       +\mathcal E(t),
\]
where the condition $\omega_{g_s}(m)=0$ in the support of $\gamma$ makes
$p\nmid m$ and $\mathcal E$ is the part of $\mathcal{T}$ that comes from integers divisible by a square from $\mathcal{P}_{g_s}$. Let $\mathcal E_{g_s}$ denote the set of such integers. Since there are $O(Y\sum_{p\in\mathcal P_{g_s}}p^{-2})
\ll Y\exp(-cL^{1/8})$ such integers with $n\asymp Y$, the
Cauchy--Schwarz inequality, fourth divisor moments and
\eqref{eq:bridge-mark-cap} give
\[
\sum_{n\in\mathcal E_{g_s}}|F(n)|^2
\ll
YL^C \exp(-cL^{1/8}).
\]
Consequently, since $T_0/Y=o(1)$, it follows from 
\eqref{eq:bridge-elementary-mean-square} that
\[
 \int_{|t|\le T_0}|\mathcal E(t)|^2\,dt
          \ll L^C\exp(-cL^{1/8}).
\]

\paragraph{Separating the variables.} 
In order to finish decoupling the variables $p$ and $m$ it remains to deal with $\psi(pm/Y)$ and $e(apm/q)$. For the latter, we just restrict $p\equiv r\pmod q$ and $m\equiv r'\pmod q$.
The additive phase is then the constant $e(arr'/q)$. For the former, we first localize the variables around dyadic intervals. We use a smooth dyadic partition $1=\sum_P f(p/P)$, with $f$ fixed
and supported in $[1/2,2]$. Once a dyadic scale $P$ is fixed let us write $R_0=Y/P$. On the support of $f(p/P)\psi(pm/Y)$, the ratio
$m/R_0$ lies in a fixed compact subinterval of $(0,\infty)$.
We may therefore insert a fixed smooth cutoff $g(m/R_0)$ equal
to one there.

The scales satisfy
\begin{equation}
 L^{1/8}-O(1)\le\log P\le L^{1/4}+O(1),\qquad
 R_0=x^{1+o(1)},\qquad
 \frac{T_0}{R_0}\ll\frac{PL^{B_0+B_1}}{H'}=o(1),
 \label{eq:bridge-residual-scales}
\end{equation}
where we recall that $T_0$ was specified in (\ref{eq:bridge-T0}) and the last inequality follows because $H'$ contains two nonempty
big-prime products, whereas $P$ is at the small-prime scale.

Let us now define
\[
 \widetilde\psi(\nu)=\int_0^\infty\psi(u)u^{-i\nu}\,\frac{du}{u}.
\]
Fourier inversion then gives
\[
 \psi(pm/Y)=\frac1{2\pi}\int_{\mathbb R}
       \widetilde\psi(\nu)Y^{-i\nu}p^{i\nu}m^{i\nu}\,d\nu.
\]
Hence, writing $s=(P,r,r',\nu)$, we obtain
\begin{equation}\label{eq:bridge-mellin-decomposition}
 \mathcal T(t)-\mathcal E(t)
 =\frac1{2\pi V_{g_s}}\sum_P\sum_{r,r'\bmod q}e(arr'/q)
    \int_{\mathbb R}\widetilde\psi(\nu)Y^{-i\nu}
                         P_s(t)R_s(t)\,d\nu,
\end{equation}
where
\begin{equation}
\begin{aligned}
 P_s(t)&=\frac1P\sum_{\substack{p\in\mathcal P_{g_s}\\p\equiv r\pmod q}}
               f(p/P)p^{i(t+\sigma+\nu)},\\
 R_s(t)&=\frac1{R_0}\sum_{m\equiv r'\pmod q}
               g(m/R_0)\gamma(m)m^{i(t+\sigma+\nu)}.
\end{aligned}
 \label{eq:bridge-factorized}
\end{equation}
The factor $1/Y$ has split exactly as $1/(PR_0)$. There are
only $L^{O(1)}$ choices of $P,r,r'$, and
$\widetilde\psi$ is a fixed Schwartz function. 

The pointwise bounds for $\alpha_{\mathcal P}$ and $W$, together
with divisor moments, give
$\sum_{m\asymp R_0}|\gamma(m)|^2\ll R_0L^{C}$.
Thus \eqref{eq:bridge-elementary-mean-square} gives, uniformly in
$\nu$ and in the location of any interval $I$ of length at most
$4T_0$,
\begin{equation}
 \int_I|R_s(t)|^2\,dt\ll L^{C}.
 \label{eq:bridge-residual-mean-square}
\end{equation}
Combining this with the trivial bound $|P_s(t)|\ll1$ and
Minkowski's inequality, the rapid decay of $\widetilde\psi$
allows us to discard $|\nu|>L$ through the bound
\[
\int_{|t|\le T_0}
\left|
\int_{|\nu|>L}
\widetilde\psi(\nu)Y^{-i\nu}
P_s(t)R_s(t)\,d\nu
\right|^2dt
\ll_N L^{-N},
\]
for each fixed $P, r, r'$. Summing over all $\ll L^{O(1)}$ such choices, this gives the desired estimate in this range.

\paragraph{Smoothing the banks.}
To deal with the remaining range we want to apply Lemma~\ref{lem:zf-smooth-prime}, but for this we first need to smooth the sharp restriction $p\in\mathcal P_{g_s}$ in a standard way. In each dyadic piece, choose a cutoff $0\le w(p/P)\le1$ supported inside the bank and equal to one except in endpoint intervals of the bank of total length $O(\varepsilon P)$, where $\varepsilon=L^{-K}$. Let $P_s^{\rm sm}$ be the prime sum with this additional cutoff. Counting integers in the transition intervals gives, uniformly in $t$ and $\nu$,
\[
 |P_s(t)-P_s^{\rm sm}(t)|\ll\varepsilon+P^{-1}.
\]
By \eqref{eq:bridge-residual-mean-square}, the resulting integrated squared error is $
O\!\left(L^C(\varepsilon+P^{-1})^2\right)$.
Since there are only $L^{O(1)}$ residue classes and dyadic pieces, this remains smaller than any prescribed negative power of $L$ if $K$ is chosen sufficiently large. For this fixed choice of $K$, the derivatives of $fw$ cost only fixed powers of $L$.

\paragraph{The high-frequency bound.}
The smoothed prime weight $fw$ is now supported inside the bank,
so its sharp bank restriction can be omitted. All its primes
exceed $q$. Expanding the residue-class indicator in characters
modulo $q$ and applying Lemma~\ref{lem:zf-smooth-prime} gives,
for each fixed $j\ge1$,
\[
 |P_s^{\rm sm}(t)|
   \ll_j L^{C_{j,K}}(1+|t+\sigma+\nu|)^{-j}
       +L^{C_K}\exp(-cL^{1/8}),\qquad |t|\le T_0.
\]
Recall that $|\sigma|, |\nu| \le L$. Let $A_s > 0$ be sufficiently large and let $B>1$ be sufficiently large with respect to $A_s$ and $K$. For $|t|>T_c=L^B$, we have
$|t+\sigma+\nu|\ge |t|/2$ and the preceding bound is at most
$L^{-A_s}$. It then follows that
\begin{equation}
\begin{aligned}
 \int_{T_c<|t|\le T_0}|P_s^{\rm sm}(t)R_s(t)|^2\,dt
 &\le \sup_{T_c<|t|\le T_0}|P_s^{\rm sm}(t)|^2
               \int_{|t|\le T_0}|R_s(t)|^2\,dt\\
 &\ll L^{-A_s}.
\end{aligned}
 \label{eq:bridge-high-times}
\end{equation}
We can now combine this with Minkowski's inequality to bound \eqref{eq:bridge-mellin-decomposition}. The $L^1$ norm of $\widetilde\psi$ is $O(1)$ and the
number of terms costs only a fixed power of $L$. Including the
square exceptions, Fourier tails and smoothing errors already
estimated, the high-frequency contribution is thus $O_N(L^{-N})$.
Together with \eqref{eq:bridge-low-times}, this proves
\eqref{eq:bridge-Mellin-target}. Returning through the
preceding reductions, each comparison correlation
$\mathcal C_\nu$ therefore has an arbitrary fixed
logarithmic saving. Using the second inequality in
\eqref{eq:graph-comparison-relation} and choosing this saving
sufficiently large, we obtain
\[
 \sum_\nu c_\nu\mathcal C_\nu\ll_A L^{-A}.
\]
The first inequality in \eqref{eq:graph-comparison-relation}
then gives the required bound for $\mathcal C_0$.
This proves Proposition~\ref{prop:centered-marked-correlation}.

\section{The comparison graphs}
\label{sec:graph}

In this final section, we prove Proposition~\ref{prop:graph-comparison} by applying the
weighted-dilation construction of \cite{OpenAIGraph}.
We begin by recalling the graph framework of that paper
in slightly greater generality than is strictly needed
for our application, in order to make the relevant
definitions and normalizations clear.
We then show how the estimates of \cite{OpenAIGraph}
adapt to the specific complete prime-divisor lists that our arguments require
and how they yield the desired arithmetic comparison relation. Lean formalizations of the intermediate graph results
we use from \cite{OpenAIGraph} are also available
in~\cite{XuTotientLean}.

\subsection{The represented divisibility graph}

Fix an active family as in Section~\ref{sec:arithmetic-bridge}.
Write $\mathcal G=\mathcal G_s\sqcup\mathcal G_b$ for its index
set, with $\mathcal G_s$ indexing the small banks and
$\mathcal G_b$ the big banks. The banks $\mathcal P_g=P_g$
are pairwise disjoint intervals of primes satisfying the
prime-range and reciprocal-mass bounds in \eqref{eq:banks}.
Their index sets satisfy
\[
 c_0J\le |\mathcal G_s|,|\mathcal G_b|\le C_*J
\]
for a fixed $c_0>0$, as in Proposition~\ref{prop:graph-comparison}.
Recall that $\mu_g(p)=(V_gp)^{-1}$ is a probability law on
$\mathcal P_g$. We write
\[
 \mathcal P=\bigcup_{g\in\mathcal G}\mathcal P_g,\qquad
 s=|\mathcal G|,\qquad
 \omega_g(n)=\#\{p\in\mathcal P_g:p\mid n\}.
\]
Unless a smaller range is specified, bank indices range over
$\mathcal G$. A subscript $b$ on a label tuple denotes its
restriction to $g\in\mathcal G_b$. For large $x$ no bank prime
divides the fixed nonzero shift $h$. All statements below are
uniform over these active families.

\paragraph{Represented vertices.}
We will work with several graphs constructed over the same vertex set, which we now define. Fix a positive integer $M$. A represented vertex
is a pair $v=(n,\boldsymbol p)$, where $n\in\mathbb Z$ and
\[
 \boldsymbol p=(\boldsymbol p_g)_{g\in\mathcal G},\qquad
 \boldsymbol p_g=(p_{g,1},\ldots,p_{g,M})\in\mathcal P_g^M,
\]
such that the entries in each bank are distinct and divide $n$.
These ordered prime lists are the \emph{marks} of the vertex.
They need not include every bank prime divisor of $n$. A list in
bank $g$ is \emph{complete} if it includes all these divisors,
equivalently if $\omega_g(n)=M$.

Let $\Omega_M^{\rm rep}$ be the set of represented vertices. Give
each vertex mass $\lambda_M=\prod_gV_g^{-M}$ and use
\[
 \mathcal H_M^{\rm rep}=\ell^2(\Omega_M^{\rm rep},\lambda_M),
 \qquad
 \langle f,g\rangle=\sum_{v\in\Omega_M^{\rm rep}}
                              \lambda_M\overline{f(v)}g(v).
\]
For $1\le r<\infty$, the corresponding norms are
$\|f\|_r^r=\sum_{v\in\Omega_M^{\rm rep}}\lambda_M|f(v)|^r$.
A represented vertex has $\omega_g(n)\ge M$ in every bank, so if we define its
\emph{endpoint damping} by
\begin{equation}
 \delta_M(n)=2^{-\frac12\sum_g(\omega_g(n)-M)}\le1
 \label{eq:graph-damping}
\end{equation}
then it equals one precisely when all its lists are complete.
The damping compensates for the growing number of represented
lists when an integer has more than $M$ prime divisors in a bank;
it disappears on the complete-list vertices used in our application.

\paragraph{Patterns.}
A \emph{pattern} is the following pair of data:
\[
 \bigl((C_g)_{g\in\mathcal G},K\bigr),\qquad
 C_g\subseteq\{1,\ldots,M-1\},\qquad
 C_g=\varnothing\quad(g\in\mathcal G_s),
\]
where $K$ is a bounded complex-valued function with domain
\[
 K:\prod_{g\in\mathcal G_b}
       \bigl(\mathcal P_g^M\times\mathcal P_g^M
                              \times\mathcal P_g^{C_g}\bigr)
                                               \longrightarrow\mathbb C.
\]
We write its arguments as
$K(\boldsymbol p_b,\boldsymbol q_b,\boldsymbol r_b)$.
This full domain includes repeated prime values.

The slots in $C_g$ are called \emph{freed}. We say the slots $j<M$
outside $C_g$ are \emph{shared} and the slots in
$C_g\cup\{M\}$ are \emph{unshared}. Put $t_g=1+|C_g|$, the
number of unshared slots in bank $g$. Write
$\boldsymbol t=(t_g)_{g\in\mathcal G}$. The last slot $M$ is always
unshared. For a pattern indexed by $\nu$, we use $C_g^{(\nu)}$
and $K_\nu$, suppressing $\nu$ when discussing a single pattern.

\paragraph{The graph of one pattern.}
Fix a pattern $\nu$ and a real phase parameter $\zeta$. For
represented vertices $v=(n,\boldsymbol p)$ and
$w=(n',\boldsymbol q)$ and an auxiliary tuple
$\boldsymbol r\in\prod_g\mathcal P_g^{C_g}$, define
\begin{equation}
 D_g=\prod_{\substack{j<M\\j\notin C_g}}p_{g,j}
             \prod_{j\in C_g}r_{g,j},\qquad
 D=\prod_gD_g,\qquad D_b=\prod_{g\in\mathcal G_b}D_g.
 \label{eq:graph-displacement-product}
\end{equation}
Each $D_g$ has $M-1$ prime factors, counted with multiplicity.
The labelled edge set $E_{h,\nu}$ consists of the triples
$e=(v,w,\boldsymbol r)$ in
$(\Omega_M^{\rm rep})^2\times\prod_g\mathcal P_g^{C_g}$
satisfying
\[
\begin{aligned}
 n'&=n+hD,\\
 q_{g,j}&=p_{g,j} &&(j<M,\ j\notin C_g),\\
 r_{g,j}&\notin\{p_{g,i},q_{g,i}:1\le i\le M\}
                                                   &&(j\in C_g).
\end{aligned}
\]
We call $v$ its source and $w$ its target. We write $n(e)=n$,
$n'(e)=n'$ and $D(e)=D$ for its integer coordinates and displacement
product. No further equality between the endpoint lists is imposed
at unshared slots. Auxiliary labels may equal one another. Different auxiliary tuples specify
different edges even if the endpoints $v, w$ agree.

Give $e\in E_{h,\nu}$ the weight
\begin{equation}
 w_{\nu,\zeta}(e)=
 \left(\prod_gV_g^{-t_g}\right)
 \left(\prod_g\prod_{j\in C_g}\mu_g(r_{g,j})\right)
 K_\nu(\boldsymbol p_b,\boldsymbol q_b,\boldsymbol r_b)D^{i\zeta}
                    \delta_M(n)\delta_M(n').
 \label{eq:represented-edge-weight}
\end{equation}
The \emph{represented divisibility graph} of this pattern is the
weighted directed multigraph
\[
 \Gamma_{h,\nu,\zeta}^{\rm rep}
   =\bigl(\Omega_M^{\rm rep},E_{h,\nu},w_{\nu,\zeta}\bigr),
\]
with vertex measure $\lambda_M$. Its adjacency operator is
\[
 (\widetilde A_{h,\nu,\zeta}^{\rm rep}f)(v)
       =\sum_{e=(v,w,\boldsymbol r)\in E_{h,\nu}}
                                             w_{\nu,\zeta}(e)f(w).
\]
Notice this finite sum includes all admissible auxiliary tuples. 

\paragraph{Signed families and permutation averages.}
Let $\mathfrak F$ be a finite index set containing $0$. A signed
comparison family specifies a pattern
$((C_g^{(\nu)})_g,K_\nu)$ for each $\nu\in\mathfrak F$, with $\nu=0$ always given by the \emph{raw} member $(C_g^{(0)}, K_0)=(\varnothing, 1)$. The
members $\nu \neq 0$ are called \emph{comparisons}.  We shall construct such a family with 
\begin{equation}
 |C_g^{(\nu)}|\le m,\qquad
              \sum_{\nu\in\mathfrak F}\sup|K_\nu|\le L^{C_1},
 \label{eq:g2}
\end{equation}
where the supremum is over the full domain of $K_\nu$. The integer
$m\ge1$ and constant $C_1>0$ are chosen before a sufficiently
large fixed $M$.

Let $\mathfrak S_M$ be the group of permutations of
$\{1,\ldots,M\}$. For
$\boldsymbol\sigma=(\sigma_g)_{g\in\mathcal G}\in
\mathfrak S_M^{\mathcal G}$, put
$p^{\boldsymbol\sigma}_{g,j}=p_{g,\sigma_g(j)}$ and define
\[
 (\Pi f)(n,\boldsymbol p)
   =\frac1{(M!)^s}\sum_{\boldsymbol\sigma\in\mathfrak S_M^{\mathcal G}}
                                 f(n,\boldsymbol p^{\boldsymbol\sigma}).
\]
Thus $\Pi$ averages the list orders independently in every bank.
The vertex measure is invariant under these permutations, so $\Pi$
is an orthogonal projection. The family's symmetrized adjacency
operator is
\begin{equation}
 A_{h,\zeta}^{\rm rep}
       =\Pi\left(\sum_{\nu\in\mathfrak F}
                         \widetilde A_{h,\nu,\zeta}^{\rm rep}\right)\Pi.
 \label{eq:represented-signed-operator}
\end{equation}
The two projections average the list orders at the source and target
independently. The family and its coefficients will remain fixed as $\zeta$ varies.

\subsection{The independent model}

We now recall the independent model from \cite{OpenAIGraph}.
This is not strictly speaking necessary for our needs, since we could
invoke the relevant estimates from that paper directly.
We nevertheless believe that making the model and its
operator explicit will make it substantially easier for
the reader to follow their application in our setting. 

The independent model keeps the pattern
coefficients but forgets about the small-prime list, while for the big-prime list it replaces divisibility by independent sampling. In particular, it does not distinguish integers $n \in \mathbb{Z}$.
Its label space, probability measure and Hilbert space are
\[
 \mathcal L_b=\prod_{g\in\mathcal G_b}\mathcal P_g^M,
 \qquad \rho_b=\bigotimes_{g\in\mathcal G_b}\mu_g^{\otimes M},
 \qquad \mathcal H_b=L^2(\mathcal L_b,\rho_b).
\]
Repeated entries are allowed in $\mathcal L_b$. Given a pattern
$\nu$ and a source tuple $\boldsymbol p_b$, copy each shared
coordinate to the target: $q_{g,j}=p_{g,j}$ for $j<M$ outside
$C_g^{(\nu)}$. Draw all unshared target coordinates and all
auxiliary coordinates independently with their respective laws
$\mu_g$. For $\theta,\zeta\in\mathbb R$, define
\begin{equation}
 (\widetilde T_{\theta,\nu,\zeta}F)(\boldsymbol p_b)
   =\mathbb E_{\boldsymbol r_b,\boldsymbol q_b}
       K_\nu(\boldsymbol p_b,\boldsymbol q_b,\boldsymbol r_b)
       D_b^{i\zeta}e(\theta D_b)F(\boldsymbol q_b),
 \qquad e(t)=\exp(2\pi it).
 \label{eq:g3}
\end{equation}
Here $D_b$ is the product in
\eqref{eq:graph-displacement-product}, evaluated at the sampled
labels. The expectation imposes no distinctness or auxiliary-label
exclusion. Let $\Pi_b$ average uniformly over all permutations of each
big-bank list, independently from bank to bank. The signed
independent operator is
\begin{equation}
 T_{\theta,\zeta}
   =\Pi_b\left(\sum_{\nu\in\mathfrak F}
                  \widetilde T_{\theta,\nu,\zeta}\right)\Pi_b.
 \label{eq:independent-signed-operator}
\end{equation}
We can now compare our setting with
\cite[Definitions~3.1--3.3]{OpenAIGraph}. In their notation, we take $\ell=1$,
their fixed shared-list parameter equal to $M-1$ and their
bank-count parameter $T$ equal to our $J=\log L$.
We also take their damping parameter $q=1/2$ and
$(a,b,c,d)=(1/8,1/4,1/3,2/5)$, which satisfy
$0<a<b<c<d<0.47$.
With these choices, \eqref{eq:represented-signed-operator}
is their physical operator from Definition~3.3, while
\eqref{eq:independent-signed-operator} is their ideal operator
from Definition~3.2.

\paragraph{The structured comparison family.}
We now describe the precise signed comparison family constructed in
\cite{OpenAIGraph}. We will soon use its specific structure
to establish the properties of the comparison terms asserted
in Proposition~\ref{prop:graph-comparison}. 

Partition the big-bank index set as
$\mathcal G_b=\mathcal G_1\sqcup\mathcal G_2$, with the two
sizes differing by at most one. For an integer $1\le m<M$, designate the first $m$ slots
in each big bank as \emph{probes}: these are the slots eligible
to be freed in a comparison. Thus, the sets $C_g^{(\nu)}$ with $\nu \neq 0$
will range over all choices of subsets of $\{1,\ldots,m\}$ for each $g\in\mathcal G_b$,
subject to at least one slot being freed in each of the two
blocks. Write
\[
 \mathcal Q_i=\{(g,j):g\in\mathcal G_i,\ 1\le j\le m\}
 \qquad(i=1,2)
\]
for the index sets of the probes in the two blocks. We now define the coefficients $K_{\nu}$. Intuitively, their values at given lists of primes will provide a way of measuring how close the relevant products constructed from these primes are, not only in size but also on average over residue classes of small moduli. For this, fix $Q\ge1$ and
$0<\delta,\xi\le1$. For a nonempty slot set
$\Lambda\subseteq\mathcal Q_1\cup\mathcal Q_2$, take
independently chosen $R_{g,j}$ according to the probability laws $\mu_g$. Define the random product,
logarithmic cells and cell masses by
\begin{equation}
\label{12}
 Z_\Lambda=\prod_{(g,j)\in\Lambda}R_{g,j},\qquad
 I_k=[k\delta,(k+1)\delta),\qquad
 \nu_{\Lambda k}=\mathbb P(\log Z_\Lambda\in I_k)
 \quad(k\in\mathbb Z).
\end{equation}
Let $\mathfrak X_Q$ be the set of primitive Dirichlet characters
of conductor at most $Q$, including the conductor-one character.
For positive integers $D,X$ define, as in \cite[(4.7)]{OpenAIGraph},
\begin{equation}
 \mathcal K_\Lambda(D,X)=
 \sum_{k:\nu_{\Lambda k}\ge\xi}
 \frac{\one_{\{\log D\in I_k\}}\one_{\{\log X\in I_k\}}}
      {\nu_{\Lambda k}}
       \sum_{\chi\in\mathfrak X_Q}\overline{\chi(D)}\chi(X).
 \label{eq:separated-kernel-formula}
\end{equation}
Only finitely many cells have positive mass, so this is a finite
sum. The construction chooses $Q,\delta^{-1},\xi^{-1}$ as fixed
powers of $L$ before choosing $M$; see
\cite[Lemma~4.3]{OpenAIGraph}.

Index the comparisons by pairs of nonempty subsets
$A\subseteq\mathcal Q_1$, $B\subseteq\mathcal Q_2$. For
$\nu=(A,B)$, let
\[
\begin{aligned}
 C_g^{(\nu)}&=\{j:(g,j)\in A\cup B\},\\
 D_A&=\prod_{(g,j)\in A}r_{g,j},&
 X_A&=\prod_{(g,j)\in A}q_{g,j},\\
 D_B&=\prod_{(g,j)\in B}r_{g,j},&
 X_B&=\prod_{(g,j)\in B}p_{g,j}
\end{aligned}
\]
and
\begin{equation}
 K_\nu(\boldsymbol p_b,\boldsymbol q_b,\boldsymbol r_b)
       =-(-1)^{|A|+|B|}
          \mathcal K_A(D_A,X_A)\mathcal K_B(D_B,X_B).
 \label{eq:g14}
\end{equation}
The family thus consists of the raw member $0$ and all these
comparisons. Every comparison has an auxiliary prime in each
block. Its coefficient uses only the freed auxiliary labels,
the unshared target labels in $A$ and the unshared source labels
in $B$. In particular it ignores the shared labels, the small-bank
labels and the distinguished last labels.

\paragraph{Independent cancellation.}
For fixed $E_{\rm id}>0$ and $C_4\ge0$,
\cite[Theorem~4.1]{OpenAIGraph} supplies $m$, the
parameters $Q,\delta,\xi$ and a constant $C_1$, so that for every sufficiently
large fixed $M$ the family just described satisfies \eqref{eq:g2} and
\begin{equation}
 \sup_{\theta\in\mathbb R,\ |\zeta|\le L^{C_4}}
                        \|T_{\theta,\zeta}\|_{2\to2}\le L^{-E_{\rm id}}.
 \label{eq:imported-ideal}
\end{equation}
Notice the family is independent of $\theta$ and the
individual value of $\zeta$, with its defining parameters fixed
independently of $M$; increasing $M$ supplies additional shared slots.

\paragraph{Physical pairing and absolute bounds.}
For a prescribed $E>0$, choosing $E_{\rm id}$ as in
\cite[Theorem~3.5]{OpenAIGraph} it then follows from (\ref{eq:imported-ideal}) and
\cite[Corollary~3.11]{OpenAIGraph} that, uniformly for
$|\zeta|\le L^{C_4}$,
\begin{equation}
 |\langle f,A_{h,\zeta}^{\rm rep}g\rangle|
                    \ll XL^{-E+2C_3}
 \label{eq:represented-pairing}
\end{equation}
whenever $f(n,\boldsymbol p)=f_n$ is supported on integer
coordinates $n\in I=[X,2X)$ with
$x^\gamma\le X\le x^{1/\gamma}$ and
\[
 \sup|f_n|\le\exp(C\sqrt L),\qquad
 \|f\|_2,\|g\|_2\le X^{1/2}L^{C_3}.
\]
Here $C,C_3,\gamma>0$ are fixed before the graph parameters.
Only the first endpoint must be independent of its prime lists.
The fixed $h$ satisfies the permitted restriction
$0<|h|\le L^{C_2}$ for every fixed $C_2>0$ and large $x$.

It will also be helpful to have control over the absolute values of the edge weights. Let
\[
 (\mathcal B F)(v)
   =\sum_{\nu\in\mathfrak F}\sum_{e:v\to w}
                                  |w_{\nu,0}(e)|F(w).
\]
Notice the absolute value is taken for each labelled edge before adding
patterns or averaging list orders. By
\cite[Lemma~3.4]{OpenAIGraph}, every row sum of $\mathcal B$
and of its adjoint is at most $L^{C_{\rm abs}}$. The exponent $C_{\rm abs}$ depends only on $m,C_1,c_0$ and
the fixed constants in \eqref{eq:banks}, but is independent
of $M$ and the particular choice of active banks. Schur's test bounds the operator norm by the
same power of $L$. The permutation averages preserve these bounds.
They hold for every real $\zeta$ because $|D^{i\zeta}|=1$.

\subsection{Separation of the structured coefficients}

The imported construction gives cancellation for the specific
family just defined. We now prove directly from its coefficients
the separation needed in Section~\ref{sec:arithmetic-bridge}.

\begin{lemma}
\label{lem:separated-comparison-coefficients}
Let $\bigl((C_g^{(\nu)})_g,K_\nu\bigr)_{\nu\in\mathfrak F}$
be the signed comparison family constructed above.
For each comparison $\nu=(A,B)\in\mathfrak F\setminus\{0\}$,
write $\boldsymbol d_1=(r_{g,j})_{(g,j)\in A}$
and $\boldsymbol d_2=(r_{g,j})_{(g,j)\in B}$
for its two auxiliary tuples. There are finite index sets $\mathcal I_\nu$
and real coefficients $c_{\nu j}$ such that
\begin{equation}
 K_\nu=\sum_{j\in\mathcal I_\nu} c_{\nu j}
       a_{\nu j}(\boldsymbol d_1)b_{\nu j}(\boldsymbol d_2)
       u_{\nu j}(\boldsymbol p)v_{\nu j}(\boldsymbol q),
 \label{eq:separated-comparison-coefficients}
\end{equation}
where the four factors $a_{\nu j},b_{\nu j},u_{\nu j},v_{\nu j}$
have modulus at most one and $u_{\nu j}$ and $v_{\nu j}$
depend only on the unshared big-bank labels
of their variables. Furthermore, there exists $C$ independent of $M$ such that
\[
 \sum_{\nu\ne0}\left(|\mathcal I_\nu|
                   +\sum_{j\in\mathcal I_\nu}|c_{\nu j}|\right)
                                                        \le L^C.
\]
\end{lemma}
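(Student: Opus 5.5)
The plan is to expand the explicit formula \eqref{eq:g14} term by term. For $\nu=(A,B)$ we have
\[
K_\nu=-(-1)^{|A|+|B|}\mathcal K_A(D_A,X_A)\mathcal K_B(D_B,X_B).
\]
Here $D_A$ is a function of $\boldsymbol d_1$ alone and $D_B$ of $\boldsymbol d_2$ alone. The label $X_A$ is built from the target labels $q_{g,j}$ with $(g,j)\in A$; these are freed, hence unshared, big-bank slots of $\boldsymbol q$. Likewise $X_B$ is built from the source labels $p_{g,j}$ with $(g,j)\in B$, again unshared big-bank slots. So it is enough to write each of $\mathcal K_A(D,X)$ and $\mathcal K_B(D,X)$ as a short sum of products (bounded function of $D$) $\times$ (bounded function of $X$), with controlled coefficients, and then multiply the two expansions.

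The separation comes directly from \eqref{eq:separated-kernel-formula}:
\[
\mathcal K_\Lambda(D,X)=\sum_{k:\nu_{\Lambda k}\ge\xi}\ \sum_{\chi\in\mathfrak X_Q}\frac1{\nu_{\Lambda k}}\Bigl(\one_{\{\log D\in I_k\}}\overline{\chi(D)}\Bigr)\Bigl(\one_{\{\log X\in I_k\}}\chi(X)\Bigr).
\]
Each bracket has modulus at most one and the coefficient $1/\nu_{\Lambda k}\le\xi^{-1}$ is real. For the product $\mathcal K_A\mathcal K_B$, the index set $\mathcal I_\nu$ consists of quadruples $(k,\chi,k',\chi')$, and we set
\[
c_{\nu j}=-(-1)^{|A|+|B|}/(\nu_{Ak}\nu_{Bk'}),\qquad a_{\nu j}(\boldsymbol d_1)=\one_{\{\log D_A\in I_k\}}\overline{\chi(D_A)},\qquad b_{\nu j}(\boldsymbol d_2)=\one_{\{\log D_B\in I_{k'}\}}\overline{\chi'(D_B)},
\]
\[
v_{\nu j}(\boldsymbol q)=\one_{\{\log X_A\in I_k\}}\chi(X_A),\qquad u_{\nu j}(\boldsymbol p)=\one_{\{\log X_B\in I_{k'}\}}\chi'(X_B).
\]
These factors depend only on the stated variables and are bounded by one.

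It remains to count. Since the cell masses sum to one, the number of cells with $\nu_{\Lambda k}\ge\xi$ is at most $\xi^{-1}$. Also $|\mathfrak X_Q|\le Q^2$. Hence
\[
|\mathcal I_\nu|\le(\xi^{-1}Q^2)^2,\qquad \sum_j|c_{\nu j}|\le\xi^{-2}\cdot\xi^{-2}Q^4,
\]
and both are fixed powers of $L$, because $Q,\xi^{-1}$ are fixed powers of $L$ chosen independently of $M$ by \cite[Lemma~4.3]{OpenAIGraph}.

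The main obstacle is the number of comparisons, which is at most $2^{|\mathcal Q_1|+|\mathcal Q_2|}=2^{m|\mathcal G_b|}$. Since $|\mathcal G_b|\le C_*J$ with $J=\log L$, this is at most $L^{mC_*\log2}$, a fixed power of $L$ that does not depend on $M$ because $m$ is fixed before $M$. Multiplying the per-comparison bound by this count gives the stated $L^C$.
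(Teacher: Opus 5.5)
Your proposal is correct and follows essentially the same route as the paper. Like the paper, you split each kernel $\mathcal K_\Lambda$ into at most $Q^2\xi^{-1}$ products of bounded factors in $D$ and in $X$ with real scalars $\nu_{\Lambda k}^{-1}\le\xi^{-1}$, multiply the two kernels to get $|\mathcal I_\nu|\le Q^4\xi^{-2}$ and $\sum_j|c_{\nu j}|\le Q^4\xi^{-4}$, and multiply by the at most $2^{m|\mathcal G_b|}\le L^{mC_*\log 2}$ comparison patterns, with $X_A$ and $X_B$ correctly assigned to the unshared target and source labels respectively.
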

\begin{proof}
By \eqref{12}, we have $\sum_k\nu_{\Lambda k}=1$, so at most
$\xi^{-1}$ cells satisfy $\nu_{\Lambda k}\ge\xi$. There are at
most $Q^2$ characters in $\mathfrak X_Q$. Thus, by
\eqref{eq:separated-kernel-formula}, each kernel
$\mathcal K_\Lambda$ is a sum of at most $Q^2\xi^{-1}$ products
of two factors of modulus at most one, depending respectively
on $D$ and $X$, with scalars bounded by $\xi^{-1}$. 
Multiplying the two kernels in \eqref{eq:g14} gives the required
separation: the factors involving $D_A,D_B$ depend only on the
respective auxiliary tuples, while those involving $X_B,X_A$
depend only on the unshared source and target labels, respectively. Notice the resulting coefficients $c_{\nu j}$ are real, since they are
products of $\nu_{Ak}^{-1}$, $\nu_{B\ell}^{-1}$
and the real sign $-(-1)^{|A|+|B|}$.

Moreover,
\[
 |\mathcal I_\nu|\le Q^4\xi^{-2},
 \qquad
 \sum_{j\in\mathcal I_\nu}|c_{\nu j}|\le Q^4\xi^{-4}.
\]
There are at most
\[
 2^{m|\mathcal G_b|}\le L^{mC_*\log 2}
\]
comparison patterns, using the inherited upper bound in
\eqref{eq:banks}. Since $m$ is fixed and $Q,\xi^{-1}$ are bounded
by fixed powers of $L$, the asserted bound follows with an exponent
independent of $M$ and the particular choice of banks.
\end{proof}

\subsection{Complete lists and the exact pairing}

We need vertices whose prime lists are all complete. Define
\[
 \Omega_M=\{(n,\boldsymbol p)\in\Omega_M^{\rm rep}:
                         \omega_g(n)=M\text{ for every }g\},
 \qquad \mathcal H_M=\ell^2(\Omega_M,\lambda_M).
\]
Every list at a vertex of $\Omega_M$ orders all its bank prime
divisors. We identify $\mathcal H_M$ with its zero extension in
$\mathcal H_M^{\rm rep}$ and use the same vertex measure for
all its $\ell^r$ norms. The orthogonal projection onto this space is
\[
 (P_Mf)(n,\boldsymbol p)
     =\left(\prod_g\one_{\{\omega_g(n)=M\}}\right)f(n,\boldsymbol p).
\]

Restrict each represented graph to $\Omega_M$, retaining the
same weights and precisely the edges
\[
 E_{h,\nu}^{M}=\{(v,w,\boldsymbol r)\in E_{h,\nu}:
                                      v,w\in\Omega_M\}.
\]
We call these the \emph{exact-count graphs}. Their signed
adjacency operator, with the same list-permutation averages, is
\begin{equation}
                         A_{h,\zeta}=P_M A_{h,\zeta}^{\rm rep}P_M.
 \label{eq:exact-compression}
\end{equation}
Thus the exact-count restriction is imposed on both endpoints.

\begin{theorem}[The exact graph pairing]
\label{thm:finite-graph}
Fix $E_{\rm tar}>0$, $C_4\ge0$, $\gamma>0$,
$C_3,C_r>0$ and $r>2$. Then the structured
comparison family above can be chosen, with
$m,C_1$ and the powers of $L$ governing
$Q,\delta^{-1},\xi^{-1}$ fixed before choosing
a sufficiently large fixed $M$, so that the
following holds. Suppose $f,g\in\mathcal H_M$ satisfy
\[
 \|f\|_2,\|g\|_2\le X^{1/2}L^{C_3},\qquad
                    \|f\|_r^r\ll XL^{C_r}.
\]
If $f(n,\boldsymbol p)=f_n$ is supported on $I=[X,2X)$ with
$x^\gamma\le X\le x^{1/\gamma}$, then, uniformly for
$|\zeta|\le L^{C_4}$,
\begin{equation}
                 |\langle f,A_{h,\zeta}g\rangle|
                         \ll XL^{-E_{\rm tar}+2C_3}.
 \label{eq:g6}
\end{equation}
The labelled absolute row and column bounds for $A_{h,\zeta}$ are
$L^{C_{\rm abs}}$, with exponent independent of $M$.
\end{theorem}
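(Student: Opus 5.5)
The plan is to deduce \eqref{eq:g6} from the represented pairing \eqref{eq:represented-pairing}, using the relation $A_{h,\zeta}=P_MA_{h,\zeta}^{\rm rep}P_M$. Since $f,g\in\mathcal H_M$, we have $P_Mf=f$ and $P_Mg=g$. As $P_M$ is self-adjoint,
\[
 \langle f,A_{h,\zeta}g\rangle=\langle P_Mf,A^{\rm rep}_{h,\zeta}P_Mg\rangle=\langle f,A^{\rm rep}_{h,\zeta}g\rangle.
\]
So it suffices to check that $f,g$, viewed in $\mathcal H_M^{\rm rep}$ by zero extension, meet the hypotheses of \eqref{eq:represented-pairing}. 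The $\ell^2$ norms agree because the vertex measure $\lambda_M$ is the same, so $\|f\|_2,\|g\|_2\le X^{1/2}L^{C_3}$ carries over. The support condition on $I=[X,2X)$ and the independence of $f$ from its prime lists are also unchanged.

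The one hypothesis not directly given is the pointwise bound $\sup|f_n|\le\exp(C\sqrt L)$. Here we only have the moment bound $\|f\|_r^r\ll XL^{C_r}$ with $r>2$. I will split $f=f\one_{\{|f_n|\le \exp(\sqrt L)\}}+f\one_{\{|f_n|>\exp(\sqrt L)\}}=f_1+f_2$. Both pieces still lie in $\mathcal H_M$, depend only on $n$, and satisfy the $\ell^2$ bound, and $f_1$ meets the sup hypothesis. Applying \eqref{eq:represented-pairing} to $(f_1,g)$ with $E$ chosen as $E_{\rm tar}$ gives the main bound. For the tail, Hölder gives
\[
 \|f_2\|_2^2\le \exp(-(r-2)\sqrt L)\,\|f\|_r^r\ll X\exp(-(r-2)\sqrt L)L^{C_r}.
\]
I then bound $|\langle f_2,A_{h,\zeta}g\rangle|\le\|f_2\|_2\|A_{h,\zeta}\|_{2\to2}\|g\|_2$.

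The operator norm is controlled by the absolute row and column bounds. Compressing by $P_M$ only deletes edges, and the permutation averages $\Pi$ preserve Schur bounds. Hence the rows and columns of $|A_{h,\zeta}|$ are dominated by those of $\mathcal B$, which are at most $L^{C_{\rm abs}}$ by \cite[Lemma~3.4]{OpenAIGraph}, and Schur's test gives $\|A_{h,\zeta}\|_{2\to2}\le L^{C_{\rm abs}}$. The tail term is therefore $\ll XL^{O(1)}\exp(-\tfrac{r-2}2\sqrt L)$, which is negligible against $XL^{-E_{\rm tar}}$. The final assertion of the theorem about labelled absolute row and column bounds follows from the same domination, with $C_{\rm abs}$ independent of $M$.

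I expect the main obstacle to be bookkeeping of the order of constants rather than any new estimate. $E_{\rm id}$ and the structured family ($m$, $C_1$, $Q$, $\delta$, $\xi$) must be chosen through \cite[Theorem~4.1 and Theorem~3.5]{OpenAIGraph} from $E_{\rm tar}$, $C_3$, $C_4$ and $\gamma$, with the constant $C$ in the sup hypothesis fixed (here $C=1$) before the graph parameters, and all of this before $M$. I also need to confirm that \cite[Corollary~3.11]{OpenAIGraph} permits the second endpoint $g$ to depend on the prime lists, as the remark after \eqref{eq:represented-pairing} indicates. If the sup hypothesis there had to hold with a constant smaller than one, I would simply truncate at $\exp(C\sqrt L)$ instead; the tail is still subpolynomially small in $L$.
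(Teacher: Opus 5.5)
Your proposal is correct and matches the paper's proof. Like the paper, you pass to the represented pairing via $A_{h,\zeta}=P_MA^{\rm rep}_{h,\zeta}P_M$ and zero extension, split off the values $|f_n|>\exp(\sqrt L)$ using $\|f_{\rm tail}\|_2^2\le \exp(-(r-2)\sqrt L)\,\|f\|_r^r$ together with the Schur bound, and get the absolute row and column bounds because compression only deletes edges; the only differences are cosmetic (you truncate to zero where the paper clips to modulus $\exp(\sqrt L)$, and you take $E=E_{\rm tar}$ rather than $E>E_{\rm tar}$), and neither affects the argument.
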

\begin{proof}
First check what the compression retains. On both endpoints of
$P_M A_{h,\zeta}^{\rm rep}P_M$, every represented list is complete and
both damping factors in \eqref{eq:represented-edge-weight} are one.
The edges retain exactly the previously defined shared slots,
auxiliary exclusions and edge weights
\eqref{eq:represented-edge-weight}. Since $P_M$ depends only on the
integer, it commutes with the list-permutation projection $\Pi$.
Consequently $A_{h,\zeta}$ is precisely the signed adjacency operator on
complete-list vertices with these weights and permutation averages.
No extra factors are introduced by restriction. Removing vertices
and edges from the positive absolute operator can only decrease
its row and column sums, proving the absolute bounds.

Extend $f,g$ by zero to the represented space. This is isometric
for every $\ell^p$ norm. Notice that at a given integer, either every represented
list belongs to the exact set or none does, so $f(n,\boldsymbol p)$ remains independent of its marks.
For $B=\exp(\sqrt L)$, define the clipped vector by
\[
 f^{(B)}(n,\boldsymbol p)=
 \begin{cases}
 f_n,&|f_n|\le B,\\
 Bf_n/|f_n|,&|f_n|>B.
 \end{cases}
\]
Its discarded part $f^{\rm tail}=f-f^{(B)}$ satisfies
\[
 \|f^{\rm tail}\|_2^2\le B^{-(r-2)}\|f\|_r^r.
\]
The represented absolute bound and Schur's test give
\[
 |\langle f^{\rm tail},A_{h,\zeta}^{\rm rep}g\rangle|
 \ll XL^{C_{\rm abs}+C_3+C_r/2}
                         \exp(-(r-2)\sqrt L/2).
\]
This is negligible to every fixed logarithmic order. The clipped vector still depends only on $n$. Its $\ell^2$
norm does not increase. Apply \eqref{eq:represented-pairing}, choosing
$E>E_{\rm tar}$ and the parameters in the order stated there.
By \eqref{eq:exact-compression} this proves \eqref{eq:g6}.
\end{proof}

\subsection{Deduction of the graph comparison}
\label{sec:graph-comparison-deduction}

We now insert the harmonic smooth weight in \eqref{eq:g6} and
remove the shared prime labels. This will prove
Proposition~\ref{prop:graph-comparison} with exactly the comparison
terms specified in \eqref{eq:graph-comparison-term}.

\paragraph{Inserting the harmonic weight.}
Fix $A>0$ and $\Psi\in C_c^\infty((0,\infty))$. Let $C_{\rm end}$ be a sufficiently large constant, independent of $M$, to be specified soon.
Apply Theorem~\ref{thm:finite-graph} with
\[
 C_4=1,\qquad\gamma=1/2,\qquad
                    E_{\rm tar}>A+2C_{\rm end}+3
\]
and choose one admissible fixed $M$. The pattern family is the same
for every $|\zeta|\le L$ and its coefficient exponents have already
been chosen before $M$.

For an edge of a pattern, define its shared and auxiliary products by
\[
 (D_R)_g=\prod_{\substack{j<M\\j\notin C_g}}p_{g,j},\qquad
 D_R=\prod_g(D_R)_g,\qquad
 D_C=\prod_g\prod_{j\in C_g}r_{g,j}.
\]
Then the displacement product $D$ defined in
\eqref{eq:graph-displacement-product} factors as
$D=D_RD_C$. Writing $n=n(e)$, $n'=n'(e)$ and $D=D(e)$
inside the edge sum, consider
\begin{equation}
 \mathcal C_\Psi
   =\lambda_M\sum_{\nu\in\mathfrak F}
      \sum_{\substack{e\in E_{h,\nu}^{M}\\n>0}}
       w_{\nu,0}(e)\,
        \alpha_{\mathcal P}(n)\beta_{\mathcal P}(n')
        \frac{\Psi(n/(xD))}{n},
 \label{eq:graph-weighted-sum}
\end{equation}
where $w_{\nu,0}(e)$ are the edge-weights introduced in
\eqref{eq:represented-edge-weight}, with the damping factors $\delta_M$ now equal to $1$ since we are restricting to edges in $E_{h,\nu}^{M}$.
Let $\operatorname{supp}\Psi\subset[c_\Psi,C_\Psi]$ with
$0<c_\Psi<C_\Psi$ and choose a common bound
\[
 D\le D_{\max}:=\exp(c_MJL^{2/5})=x^{o(1)},
\]
where $c_M>0$ is a sufficiently large constant depending on the
fixed $M$ and bank bounds. Every source integer in
\eqref{eq:graph-weighted-sum} with nonzero weight belongs to
$\mathcal J_x=[c_\Psi x,C_\Psi xD_{\max}]$.
Put $\varphi(u)=\Psi(e^u)$ and
$\widehat\varphi(\zeta)=\int\varphi(u)e^{-i\zeta u}\,du$.
For $n>0$, Fourier inversion gives
\begin{equation}
 \frac{\Psi(n/(xD))}{n}
   =\frac1{2\pi n}\int_{\mathbb R}
       \widehat\varphi(\zeta)(n/x)^{i\zeta}D^{-i\zeta}\,d\zeta.
 \label{eq:graph-harmonic-inversion}
\end{equation}
Cover $\mathcal J_x$ by disjoint dyadic intervals $I=[X,2X)$,
using $O(L)$ intervals with $X=x^{1+o(1)}$. On one such dyadic interval, use $A_{h,-\zeta}$ with endpoint functions
\[
\begin{aligned}
 f(n,\boldsymbol p)=f_n
   &=\overline{\alpha_{\mathcal P}(n)
          \one_{I\cap\mathcal J_x}(n)(X/n)(n/x)^{i\zeta}},\\
 g(n,\boldsymbol p)=g_n
   &=\beta_{\mathcal P}(n)\one_{[X/2,3X]}(n).
\end{aligned}
\]
Since $|hD|\le|h|D_{\max}=x^{o(1)}=o(X)$, this target cutoff
contains every edge leaving the source support. Here we conjugate
$f_n$ due to our inner-product convention. Inserting \eqref{eq:graph-harmonic-inversion}
into \eqref{eq:graph-weighted-sum}, we see that
the contribution to $\mathcal C_\Psi$ from
edges whose source integer lies in $I$ is
\begin{equation}
\label{int}
 \frac1{2\pi X}\int_{\mathbb R}
 \widehat\varphi(\zeta)
 \langle f,A_{h,-\zeta}g\rangle\,d\zeta.
\end{equation}
Here the permutation projections do not affect the sum,
because both endpoint functions are independent
of the ordering of their prime lists. By Lemma~\ref{lem:marked-moments}, we have $\|f\|_2,\|g\|_2\le X^{1/2}L^{C_{\rm end}}$ and $ \|f\|_4^4\ll XL^{C_{\rm end}}$, provided $C_{\rm end}$ was chosen sufficiently large. This gives us the required
moment conditions in Theorem~\ref{thm:finite-graph}, with $r=4$. Hence
\[
 X^{-1}|\langle f,A_{h,-\zeta}g\rangle|
       \ll L^{-E_{\rm tar}+2C_{\rm end}}\ll L^{-A-3}
                               \qquad(|\zeta|\le L).
\]
Inserting this in (\ref{int}) controls the range $| \zeta | \le L$ of the integral by $O_A (L^{-A-2})$. For the remaining range of $\zeta$, the absolute row and column
bounds, together with Schur's test and
Cauchy--Schwarz, already give
\[
 X^{-1}|\langle f,A_{h,-\zeta}g\rangle|
 \le X^{-1}\|f\|_2
       \|A_{h,-\zeta}\|_{2\to2}\|g\|_2
 \ll L^{C_{\rm abs}+2C_{\rm end}}.
\]
Choosing a sufficiently large decay
order in the Schwartz bound for $\widehat\varphi$, we see that (\ref{int}) is $O(L^{-A-2})$ when restricted to $|\zeta|> L$. Summing over all the dyadic intervals we conclude that \begin{equation}
                  \mathcal C_\Psi=O_A(L^{-A}).
 \label{eq:graph-weighted-bound}
\end{equation}

\paragraph{Removing the shared labels.}
Our remaining task is to recover
the arithmetic expressions in
Proposition~\ref{prop:graph-comparison} from $\mathcal C_{\Psi}$. With this in mind, we wish to remove the common dilations given by the shared products $D_R$. Let us fix a pattern $\nu\in\mathfrak F$ and write
$C_g=C_g^{(\nu)}$, $t_g=1+|C_g|$ and $K=K_{\nu}$. Recall that $K$ does not depend on the shared labels, which follows from
\eqref{eq:g14} for a comparison and from $K_0=1$ for the raw
member. Write
\[
 n=D_Rw,\qquad n+hD=D_Rw',\qquad w'=w+hD_C.
\]
The shared primes on an original edge are distinct. They satisfy
\begin{equation}
\begin{aligned}
 \omega_g(D_Rw)=M
 &\iff\omega_g(w)=t_g
 &&\text{if }(w,D_R)=1,\\
 \frac{V_g^{-M-t_g}}{(D_R)_g}
 &=V_g^{-2t_g}
   \prod_{\substack{j<M\\j\notin C_g}}\mu_g(p_{g,j}).
\end{aligned}
\label{eq:bridge-shared-identities}
\end{equation}
and
\[
\begin{aligned}
 \alpha_{\mathcal P}(D_Rw)&=\alpha_{\mathcal P}(w),&
 \beta_{\mathcal P}(D_Rw')&=\beta_{\mathcal P}(w'),\\
 \frac{\Psi(n/(xD))}{n}
       &=\frac1{D_R}\frac{\Psi(w/(xD_C))}{w}.&&
\end{aligned}
\]
where we have used \eqref{eq:core-def}. Combining the last four identities with
\eqref{eq:represented-edge-weight} and recalling that here the damping
factors equal one, we obtain, for every original edge,
\begin{equation}
\begin{aligned}
\label{something}
\lambda_M w_{\nu,0}(e)
  \alpha_{\mathcal P}(n)\beta_{\mathcal P}(n')
  \frac{\Psi(n/(xD))}{n}
&=
  \frac{\Psi(w/(xD_C))}{w}
  \alpha_{\mathcal P}(w)\beta_{\mathcal P}(w')
  K_\nu(\boldsymbol p_b,\boldsymbol q_b,\boldsymbol r_b)\\
&\qquad\quad\times
  \prod_{g\in\mathcal G}
  \left(
  V_g^{-2t_g}
  \prod_{\substack{j<M\\j\notin C_g}}
       \mu_g(p_{g,j})
  \prod_{j\in C_g}\mu_g(r_{g,j})
  \right).
\end{aligned}
\end{equation}

\paragraph{Collision errors.}
Notice that the shared primes still remain on the right-hand
side through their probability weights $\mu_g(p_{g,j})$.
If for each fixed choice of $w, w'$ these shared primes were drawn randomly, these probabilities would sum to $1$ and thus disappear.
However, only choices that reconstruct an admissible edge are
allowed. In particular, the shared primes must be distinct and
avoid the auxiliary primes. We now show that the valid choices
have total probability close to one when the reduced endpoints
have the required bank counts and negligible probability otherwise.

Fix $w>0$, $w'=w+hD_C$, the auxiliary primes and, in every bank $g$,
ordered lists of $t_g$ distinct primes dividing $w$ and $w'$
respectively. Retain for now the edge requirement that the auxiliary
primes avoid these unshared lists. We may restrict to pairs satisfying $
 t_g\le\omega_g(w),\omega_g(w')\le M$,
since clearly every pair arising from an original graph edge satisfies these
inequalities. Notice however that $ \omega_g(w)=\omega_g(w')=t_g$ need not hold for an original edge, since
a shared prime may divide $w$ or $w'$.

For these fixed data, let
\[
 S=\sum_{\substack{\text{valid choices of}\\\text{shared primes}}}
       \prod_g\prod_{\substack{j<M\\j\notin C_g}}
                                     \mu_g(p_{g,j}),
\]
where a choice is \emph{valid} if it reconstructs an edge of the
original exact-count graph, with endpoints $n=D_Rw$ and
$n'=D_Rw'$. Draw the shared primes independently according to
their laws $\mu_g$. Their unrestricted probability weights sum
to $1$, so $0\le S\le1$.

There are $O_M(J)$ shared-prime draws. On the support of the
smooth weight, $w,w'=x^{1+o(1)}$ and $D_C=x^{o(1)}$, so at most \(O(L)\) prime values either divide \(ww'D_C\) or have already been chosen as shared primes. By \eqref{eq:banks},
each individual prime has probability
\[
 \mu_g(p)=\frac1{V_gp}\le 2e^{-L^{1/8}}.
\]
A union bound therefore shows that the probability of a repeated
shared prime or a shared prime dividing $ww'D_C$ is at most
$\varepsilon_L$, where, uniformly in the fixed data,
\[
 \varepsilon_L\ll L^{O_M(1)}e^{-L^{1/8}}.
\]
We now distinguish two cases.

\emph{First, suppose $\omega_g(w)=\omega_g(w')=t_g$ in every bank.}
Every choice of distinct shared primes avoiding $ww'D_C$ is valid.
Indeed, adjoining the $M-t_g$ shared primes in bank $g$ gives
exactly $M$ distinct prime divisors at each endpoint. Avoidance
of $D_C$ ensures the remaining auxiliary-prime restrictions.
These choices have total probability at least $1-\varepsilon_L$.
Thus $S=1+O(\varepsilon_L)$.

\emph{Second, suppose the counts are not all equal to $t_g$.}
Some reduced endpoint then has more than $t_g$ prime divisors in
some bank. Suppose for example $\omega_g(w)>t_g$. A valid choice
must contain $M-t_g$ distinct shared primes in that bank. If
none divided $w$, we would have $ \omega_g(D_Rw)=\omega_g(w)+(M-t_g)>M$,
contrary to the exact-count condition at $n$. Hence every valid
choice includes a shared prime dividing $w$. The same argument
applies if the excess count occurs at $w'$. The probability of
selecting a shared prime dividing $ww'$ is at most
$\varepsilon_L$, so $
 0\le S\le\varepsilon_L$.
 
Combining the two cases gives
\[
 S=\prod_g\one_{\{\omega_g(w)=\omega_g(w')=t_g\}}
       +O(\varepsilon_L).
\]
Let $H$ denote the product of all factors in the right hand side of (\ref{something})
 except the shared-prime probability weights.
These factors are independent of the shared primes, so the
original contribution for our fixed data is $HS$. Therefore
\[
 HS=H\prod_g\one_{\{\omega_g(w)=\omega_g(w')=t_g\}}
          +O\bigl(|H|\varepsilon_L\bigr).
\]
We now sum these errors over our previously fixed choices. First, for fixed $w,w'$, the sum over all pairs of
ordered unshared lists, with their corresponding normalizations $V_g^{-2t_g}$, costs at most a factor of 
\[
 \prod_g(M)_{t_g}^2V_g^{-2t_g}\le L^{O_M(1)}.
\]
For fixed auxiliary primes, we now sum over $w$, with $w'=w+hD_C$. Put $Y=xD_C=x^{1+o(1)}$. On the
support of $\Psi(w/Y)$, both $w$ and $w'$ are comparable to $Y$.
Cauchy--Schwarz and the second divisor-moment bounds give
\[
\begin{aligned}
 \sum_{w\asymp Y}
   \frac{|\alpha_{\mathcal P}(w)\beta_{\mathcal P}(w')|}{w}
 &\ll \frac1Y
 \left(\sum_{w\asymp Y}|\alpha_{\mathcal P}(w)|^2\right)^{1/2}
 \left(\sum_{w\asymp Y}|\beta_{\mathcal P}(w')|^2\right)^{1/2}\\
 &\ll L^{O(1)}.
\end{aligned}
\]
Finally, the sum of $\prod_g\prod_{j\in C_g}\mu_g(r_{g,j})$ is clearly bounded by $1$, since $\mu_g$ are probability measures. Combining all these estimates with \eqref{eq:g2} bounds
the total error of replacing $HS$ with  $H\prod_g\one_{\{\omega_g(w)=\omega_g(w')=t_g\}}$ by
\[
 L^{O_M(1)}e^{-L^{1/8}}\ll_A L^{-A}
\]
for every fixed $A>0$. In particular, the resulting expression no longer depends on shared primes.

It remains to remove the restrictions requiring
the auxiliary primes to avoid the unshared endpoint
lists. Since these lists are now complete, an
auxiliary prime $p$ occurs in one of them precisely
when it divides the corresponding reduced endpoint.
As $p\mid D_C$, we have
\[
 p\mid w\quad\Longleftrightarrow\quad
 p\mid w+hD_C.
\]
At scale $Y\asymp xD_C$, Cauchy--Schwarz
and the fourth divisor-moment bounds give
\[
 \frac1Y\sum_{\substack{w\asymp Y\\p\mid w}}
 |\alpha_{\mathcal P}(w)
   \beta_{\mathcal P}(w+hD_C)|
 \ll L^{O(1)}p^{-1/2}.
\]
There are only $O_m(J)$ auxiliary labels, each
at least $e^{L^{1/3}}$, so the total contribution
of these excluded configurations is negligible.
The same estimate covers any prime appearing
in both unshared endpoint lists: such a prime
divides $hD_C$, and since the bank primes avoid
$h$, it must be one of the auxiliary primes. All these errors remain negligible after including
the polynomial total coefficient cost of the
comparison family given by \eqref{eq:g2}. 

\paragraph{Reading off the arithmetic relation.}
Write $\mathcal C_{\Psi,\mathrm{raw}}$ for the
contribution of the raw pattern to $\mathcal C_\Psi$.
For this pattern, $D_C=1$, $\boldsymbol t=\boldsymbol1$
and $K_0=1$. It thus follows from
\eqref{something} and the preceding analysis that
\[
\begin{aligned}
\mathcal C_{\Psi,\mathrm{raw}}
={}&\sum_{w\ge1}\frac{\Psi(w/x)}w
\alpha_{\mathcal P}(w)\beta_{\mathcal P}(w+h)\\
&\quad\times
\underbrace{\prod_{g\in\mathcal G}
\frac{\mathbf1_{\{\omega_g(w)=\omega_g(w+h)=1\}}}{V_g^2}}_{W_{\boldsymbol1}(w)W_{\boldsymbol1}(w+h)}
+O_A(L^{-A})\\
={}&\mathcal C_0+O_A(L^{-A}),
\end{aligned}
\]
 where the weights $W_{\boldsymbol t}^b$ were defined
in \eqref{eq:bridge-mark-weight}. For a comparison $\nu=(A,B)$, put $D_1=D_A$ and $D_2=D_B$.
The definitions give $D_C=D_1D_2$. Expand its coefficient $K$ in \eqref{something} using
Lemma~\ref{lem:separated-comparison-coefficients} and write $\mathcal I_\nu$ for the index set provided by the statement of that lemma. Relabel the
unshared slots in each bank in increasing order as $1,\ldots,t_g$.
Since
\[
\begin{aligned}
&\left(\prod_g
\frac{\mathbf1_{\{\omega_g(w)=\omega_g(w')=t_g\}}}
{V_g^{2t_g}}\right)
\sum_{\substack{\boldsymbol p\text{ a }\boldsymbol t\text{-list}\\\text{at }w}}
\sum_{\substack{\boldsymbol q\text{ a }\boldsymbol t\text{-list}\\\text{at }w'}}
u_{\nu j}(\boldsymbol p)v_{\nu j}(\boldsymbol q)\\
&\hspace{12mm}=
W_{\boldsymbol t}^{u_{\nu j}}(w)
W_{\boldsymbol t}^{v_{\nu j}}(w'),
\end{aligned}
\]
we see that treating each pair $(\nu,j)$ with
$j\in\mathcal I_\nu$ as a separate comparison index,
we recover \eqref{eq:graph-comparison-term}. By Lemma~\ref{lem:separated-comparison-coefficients}, we have the second inequality in \eqref{eq:graph-comparison-relation}. By construction, each comparison has at least one auxiliary
prime from each block and at most $m$ from any bank. The estimate \eqref{eq:graph-weighted-bound} therefore gives
\[
             \mathcal C_0+\sum_\nu c_\nu\mathcal C_\nu
                                      =O_A(L^{-A}).
\]
This concludes the proof of Proposition~\ref{prop:graph-comparison} and thus of Theorem~\ref{thm:main}.

\phantomsection

\bigskip
\begingroup\small
\noindent\textsc{Departamento de Matem\'atica e IMAS-CONICET,
Facultad de Ciencias Exactas y Naturales, Universidad de Buenos Aires,
1428 Buenos Aires, Argentina}\par
\smallskip
\noindent\textit{Email address:}
\href{mailto:mwalsh@dm.uba.ar}{\texttt{mwalsh@dm.uba.ar}}\par
\endgroup
\end{document}